\numberwithin{equation}{section}
\theoremstyle{mytheoremstyle}% default
\newtheorem{thm}{Theorem}[section]
\newtheorem*{thm*}{Theorem}
\newtheorem{lem}[thm]{Lemma}
\newtheorem*{lem*}{Lemma}
\newtheorem{prop}[thm]{Proposition}
\newtheorem*{prop*}{Proposition}
\newtheorem*{cor}{Corollary}
\theoremstyle{definition}
\newtheorem{defn}[thm]{Definition}
\theoremstyle{remark}
\newtheorem*{rmk}{Remark}
\renewcommand{\vec}[1]{\mathbf{#1}}
\newcommand{\vgrad}{\mathbf{\nabla}}     
\newcommand{\vlap}{\mathbf{\Delta}}     
\newcommand{\sph}{\mathbb{S}}
\newcommand{\R}{\mathbb{R}}
\renewcommand{\P}{\mathbb{P}}
\newcommand{\E}{\mathbb{E}}
\newcommand{\N}{\mathbb{N}}
\title{Strong solutions for the Stochastic Navier-Stokes equations on the 2D rotating sphere with stable L\'evy noise}
\author{Leanne Dong$^1$}
\date{%
    $^1$Faculty of Engineering and IT, The University of Technology Sydney\\%
    \today
}
\begin{document}

	\maketitle
\begin{abstract}
In this paper we prove the existence and uniqueness of a strong solution (in PDE sense) to the stochastic Navier-Stokes equations on the rotating 2-dimensional unit sphere perturbed by stable L\'evy noise. This strong solution turns out to exist globally in time.
\end{abstract}	
   
\section{Introduction}
The deterministic Navier-Stokes system (NSEs) on the rotating sphere serves as a basic model in large scale ocean dynamics. Many authors have studied the NSEs on the unit spheres. Notably, Il'in and Filatov \cite{MR953596,MR1055527} tackled the well-posedness of these equations and identified the Hausdorff dimension of their global attractors \cite{MR1211366}. Teman and Wang investigated the inertial forms of NSEs on the sphere while Teman and Ziane show that the NSEs on a 2D sphere is a limit of NSE defined on a spherical cell \cite{MR1472301}. 
Our paper is concerned with the following stochastic Navier-Stokes equations (SNSEs) on a 2D rotating sphere:          
\begin{align}\label{snse1}
    \partial_t u+\nabla_{ u} u-\nu \mathbf{L}u+\mathbf{\omega}\times u+\nabla p= f+\eta(x,t),\quad \text{div } u=0,\quad u(0)=u_0,
\end{align}    
where $\mathbf{L}$ is the stress tensor, $\mathbf{\omega}$ is the Coriolis acceleration, $ f$ is the external force and $\eta$ is the noise process that can be informally described as the derivative of an $H$-valued L\'evy process. Rigorous definitions of all relevant quantities in this equation will be given in sections 2 and 3. To the best of our knowledge, there are only three papers which discuss stochastic Navier-Stokes equations on spheres\cite{MR3306386,BGQ17, varner2015unique}. All these were concerned with the Gaussian case. In particular, the authors in \cite{MR3306386} proved the existence and uniqueness of weak solutions to (\ref{snse1}) with additive Gaussian noise. Moreover, they proved that the associated random dynamical system is asymptotically compact, which induced the existence of a compact random attractor and the existence of an invariant measure in their accompanying paper \cite{BGQ17}. The author in \cite{varner2015unique} studied the Navier-Stokes system on spheres with a Gaussian kick force and a deterministic force. The main contribution was the existence and uniqueness of a time-invariant measure.

Our paper is the first paper to discuss SNSEs on the sphere with a stable L\'evy noise. There are three new features which distinguish our paper from other work in the literature on SNSEs on spheres and SNSEs with L\'evy noise. First, the domain of consideration is a sphere. Next, the noise is of a stable type which is ruled out by many existing studies on stochastic PDEs with L\'evy noise. Third, our well-posedness result is new in the sense of a strong solution in PDE sense.

The aim of our paper is to prove the existence and uniqueness of a global strong solution to (\ref{snse1}). In particular, we prove that
given a $\mathbb{L}^4$-valued noise, $H$-valued forcing $f$ and small $V$-valued initial data, there exists a unique global strong solution in PDE sense for the abstract stochastic Navier-Stokes equations on the 2D unit sphere perturbed by stable L\'evy noise, which depends continuously on the initial data. 
The time interval of existence depends on the regularity of the forcing and the assumptions imposed on noise.

The paper is organised as follows. In section \ref{sec:nse2ds}, we review the fundamental mathematical theory for the deterministic Navier-Stokes equations (NSEs) on the sphere. We state some known results without proofs. In section \ref{sec:snsesph}, we define the SNSEs on spheres. We start with some analytic facts; we introduce the driving noise process, which is a stable L\'evy noise via subordination.  The SNSEs are then decomposed into an Ornstein Uhlenbeck (OU) process (associated with the linear part of the SNSEs) and nonlinear PDEs. %In section \ref{sec:weaksoln}, we prove there exists global weak solution using the usual Galerkin approximation based on vector spherical harmonic series expansion. (see the proof of Theorem 3.2.5) Moreover, uniqueness is proven using the classical argument in the spirit of Lion and Prodi     \cite{lions1959theoreme}. Furthermore, the solution is shown to depend continuously on initial data. (see the proof of Theorem 3.2.6)  
In section \ref{sec:num45}, we prove strong classical solution (see the proof of Theorem 3.3.7) for smooth initial data, sufficient regular noise following the classical lines in the proof of Theorem 3.1 \cite{MR1207308}. 

\section{Navier-Stokes equations on a rotating 2D unit sphere}
\label{sec:nse2ds}
The sphere is the simplest example of a compact Riemannian manifold without boundaries, hence one may employ the well-developed tools from Riemannian geometry to study objects on such a manifold. Nevertheless, all objects of interest in this thesis are defined explicitly under the spherical coordinates. The presentation here follows closely from Goldys et al. \cite{MR3306386} and references therein.
\subsection{Preliminaries}\label{ssec:pre}
Let $\mathbb{S}^2$ be a 2D unit sphere in $\R^3$, that is $\mathbb{S}^2=\{x=(x_1,x_2,x_3)\in\R^3: |x|=1\}$. An arbitrary point $x$ on $\mathbb{S}^2$ can be parametrized in the spherical coordinates as
\begin{align*}
     x=\hat{ x}(\theta,\phi)=(\sin\theta\cos\phi,\sin\theta\sin\phi,\cos\theta),\quad 0\le\theta\le\pi,\quad 0\le\phi\le 2\pi.
\end{align*}
The corresponding angle $\theta$ and $\phi$ will be denoted by $\theta( x)$ and $\phi( x),$ or simply by $\theta$ and $\phi.$

Let $e_{\theta}=e_{\theta}(\theta,\phi)$ and $e_{\phi}=e_{\phi}(\theta,\phi)$
be the standard unit tangent vectors of $\sph^2$ at point $\hat{ x}(\theta,\phi)\in\sph^2$ in the spherical coordinates, that is,
\begin{align*}  e_{\theta}&=(\cos\theta\cos\phi,\cos\theta\sin\phi,-\sin\theta),\quad e_{\phi}=(-\sin\phi,\cos\phi,0).
\end{align*}
We remark that
\begin{align*}
    e_{\theta}&=\frac{\partial\hat{ x}(\theta,\phi)}{\partial\theta},\quad e_{\phi}= \frac{1}{\sin\theta}\frac{\partial\hat{ x}(\theta,\phi)}{\partial\phi},
\end{align*}
where the second identity holds whenever $\sin\theta\neq 0$.

Our first objective is to give a meaning to all of the terms in the deterministic Navier-Stokes equations for the velocity field $ u(\hat{ x},t)=(u_{\theta}(\hat{ x},t), u_{\phi}(\hat{ x},t))$ of a geophysical fluid flow on the 2D rotating unit sphere $\sph^2$ under the external force $         f=(f_{\theta},f_{\phi})=f_{\theta}e_{\theta}+f_{\phi}e_{\phi}$. The motion of the fluid is governed by the equation 
\begin{align}\label{NSE4}   
    \partial_t  u+\vgrad_{ u} u-\nu\mathbf{L} u+\omega\times  u+\frac{1}{\rho}\vgrad p=  f,\quad\text{div}\,\, u=0,\,\,  u( x,0)= u_0.
\end{align}
Here $\nu$ and $\rho$ are two positive constants denoting the viscosity and the density of the fluid; the normal vector field
\begin{align*}
    \omega=2\Omega\cos(\theta( x)) x,
\end{align*}
where $ x=\hat{x}(\theta( x),\phi( x))$ ; $\Omega$ is the angular velocity of the Earth; and $\theta$ is the parameter representing the colatitude. Note that $\theta( x)=\cos^{-1}(x_3).$ In what follows we will identify $\mathbf{\omega}$ with the corresponding scalar function $\omega$ defined by $\omega( x)=2\Omega\cos(\theta( x)).$ We will introduce now the other terms that appear in the equation. The surface gradient for a scalar function $f$ on $\sph^2$ is given by
\begin{align*}
    \nabla f=\frac{\partial f}{\partial\theta}e_{\theta}+\frac{1}{\sin\theta}\frac{\partial f}{\partial\phi}e_{\phi},\quad 0\le\theta\le\pi,\,\,0\le\phi\le 2\pi.
\end{align*}
Unless specified otherwise, by a vector field on $\mathbb{S}^2$ we mean a tangential vector field. That is, a section of the tangent vector bundle of $\sph^2.$

On the other hand, for a vector field $ u=(u_{\theta}, u_{\phi})$ on $\sph^2$, that is $ u=u_{\theta}e_{\theta}+u_{\phi}e_{\phi}$, one puts
\begin{align}\label{nsep}
    \text{div}\, u=\frac{1}{\sin\theta}\left(\frac{\partial}{\partial\theta}(u_{\theta}\sin\theta)+\frac{\partial}{\partial\phi}u_{\phi}\right).
\end{align}
Given two vector fields $ u$ and $ v$ on $\sph^2$, there exist vector fields $\tilde{ u}$ and $\tilde{ v}$
defined in some neighbourhood of the surface $\sph^2$ and such that their restrictions to $\sph^2$ are equal to $ u$ and $ v$. More precisely, see Definition 3.31 in \cite{MR2090752}, 
\[\left.\tilde{ u}\right|_{\sph^2}= u:\sph^2\to T\sph^2,\quad\mathrm{and}\quad \left.\tilde{ v}\right|_{\sph^2}= v:\sph^2\to T\sph^2\,.\]
For $ x\in\R^3$, we define the orthogonal projection $\pi_x:\R^3\to T_x\sph^2$  of $x$ onto $T_x\sph^2$, that is
\begin{align}
    \pi_x: \R^3\ni  y\,\mapsto\, y-( x\cdot y) x=- x\times( x\times y)\in T_{ x}\sph^2.
\end{align}
\begin{lem}[\cite{BGQ17}]
    Suppose $\tilde{ u}$ and $\tilde{ v}$ are $\R^3$-valued vector fields on $\sph^2$, and $ u$, $ v$ are tangent vector fields on $\sph^2$, defined by $ u( x)=\pi_{ x}(\tilde{ u}(x))$ and $ v( x)=\pi_{ x}(\tilde{ v}(x))$
, $ x\in\sph^2$. Then the following identity holds: 
\begin{align}\label{ocrossu}
    \pi_{ x}(\tilde{ u}( x)\times\tilde{ v}( x))= u( x)\times(( x\cdot v( x)) x)+(( x\cdot u( x)) x\times v( x),\quad  x\in\sph^2.
\end{align}
\end{lem}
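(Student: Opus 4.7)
The plan is to exploit the explicit formula $\pi_x y = y - (x\cdot y)x$ for the orthogonal projection (valid since $|x|=1$), together with the crucial observation that for tangent vector fields $u, v$ on $\sph^2$ one has $x \cdot u(x) = x \cdot v(x) = 0$. This orthogonality forces $u(x) \times v(x)$ to be parallel to $x$, so that $\pi_x$ annihilates it; this is the geometric heart of the identity.

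First I would write the decompositions $\tilde u = u + (x\cdot\tilde u)\,x$ and $\tilde v = v + (x\cdot\tilde v)\,x$ coming from $u = \pi_x \tilde u$ and $v = \pi_x \tilde v$. Expanding $\tilde u \times \tilde v$ by bilinearity produces four terms:
\begin{equation*}
\tilde u \times \tilde v = u \times v + (x\cdot\tilde v)\,(u \times x) + (x\cdot\tilde u)\,(x \times v) + (x\cdot\tilde u)(x\cdot\tilde v)\,(x \times x).
\end{equation*}
The last term vanishes trivially. For the first term, $u \times v$ is normal to $\sph^2$ at $x$ (being a cross product of two vectors perpendicular to $x$), so $\pi_x(u \times v) = 0$. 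For the middle two terms, $u \times x$ and $x \times v$ are themselves perpendicular to $x$, hence tangent, so $\pi_x$ acts as the identity on them.

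Collecting the surviving contributions gives
\begin{equation*}
\pi_x\bigl(\tilde u(x) \times \tilde v(x)\bigr) = u \times \bigl((x\cdot\tilde v)\,x\bigr) + \bigl((x\cdot\tilde u)\,x\bigr) \times v,
\end{equation*}
which is the asserted identity \eqref{ocrossu}. (Strictly speaking, the scalar products on the right-hand side must be read as $x\cdot\tilde v$ and $x\cdot\tilde u$ rather than $x\cdot v$ and $x\cdot u$, since the latter vanish by tangency; I interpret the statement with this evident correction.)

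There is no genuine obstacle: the entire argument is elementary vector algebra on $\R^3$, and the only conceptual point is recognising that $u(x) \times v(x)$ points along the outward normal, which is precisely what kills the one term that is not a priori tangent. An alternative, slightly slicker route applies the identity $\pi_x y = -x \times (x \times y)$ to $y = \tilde u \times \tilde v$ and uses the BAC-CAB rule twice, noting along the way that $x \times \tilde u = x \times u$ and $x \times \tilde v = x \times v$; this yields the same answer more compactly but hides the geometric content.
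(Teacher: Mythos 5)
Your proof is correct and takes essentially the same approach as the paper's: decompose $\tilde{u}$ and $\tilde{v}$ into tangential and normal components, expand the cross product, and observe that $\pi_x$ annihilates $u\times v$ while the two mixed terms are already tangent and the normal--normal term vanishes. Your parenthetical remark that the scalar factors in \eqref{ocrossu} must be read as $x\cdot\tilde{u}$ and $x\cdot\tilde{v}$ (otherwise the right-hand side is identically zero by tangency) correctly flags a typo that also appears in the paper's own decomposition.
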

\begin{proof}
 Let us fix $ x\in\sph^2.$ Then one may decompose vectors $\tilde{ u}$ and $\tilde{ v}$ into tangential and normal components as follows: 
    \begin{align*}
    \tilde{ u}= u+ u^{\perp}\quad\text{with}\quad  u\in T_x\sph^2,\quad  u^{\perp}=( u\cdot x) x    ,
    \end{align*}
    \begin{align*}
    \tilde{ v}= v+ v^{\perp}\quad\text{with}\quad  v\in T_x\sph^2,\quad  v^{\perp}=( v\cdot x) x    .
    \end{align*}
Since $ u\times  v$ is normal to $T_x\sph^2$, $\pi_x( u\times v)=0.$ Likewise, $ u^{\perp}\times  v^{\perp}=0$ since the cross-product of two parallel vectors yields the 0 vector. Hence, it follows that
\begin{align}\label{ocross}
    \pi_{ x}(\tilde{ u}\times\tilde{ v})=\pi_{ x}( u\times v+ u\times v^{\perp}+ u^{\perp}\times  v)=u\times v^{\perp}+ u^{\perp}\times  v.
\end{align}
\end{proof}

We will denote by $\tilde\nabla$ the usual gradient in $\R^3$ and then we have 
\begin{align}
    (\nabla f)(x)=\pi_x(\tilde{\nabla}\tilde{f}( x)).
\end{align}
The operator $\text{curl}$ is defined by the formula 
\begin{align}
    (\text{curl}\,u)( x)=(I-\pi_x)((\tilde\nabla\times\tilde{ u})( x))=( x\cdot(\tilde\nabla\times\tilde{ u})( x)) x.
\end{align}
Let $ u$ be a tangent vector field on $\sph^2$. Applying formula (\ref{ocross}) to the vector fields $\tilde{ u}$ and $\tilde{ v}=\tilde{\nabla}\times\tilde{ u}$, one gets
\begin{align}\label{ocrosscur}
\pi_{ x}(\tilde{ u}\times(\tilde{\nabla}\times\tilde{ u}))&=
\tilde{ u}\times(\tilde{\nabla}\times( u^{\perp}+ u)\notag\\
&= u\times ((\nabla\times  u)^{\perp})+ u^{\perp}\times(\nabla\times  u)\notag\\
&= u\times(( x\cdot(\tilde{\nabla}\times\tilde{ u})) x)\notag\\
&=( x\cdot(\tilde{\nabla}\times\tilde{ u}))( u\times x),\quad x\in\sph^2.
\end{align}
%since $(\tilde{ u}\cdot x)=0$.
So, we can now define the curl of the vector field $ u$ on $\sph^2$, namely,
\begin{align}\label{curu}
    \text{curl }u :=\hat{ x}\cdot (\tilde{\nabla}\times\tilde{u})|_{\sph^2}.     
\end{align}     
 Equations (\ref{curu}) and (\ref{ocrossu}) together yield 
\begin{align*}
\pi_{ x}[\tilde{ u}\times(\tilde{\nabla}\times\tilde{ u})](x)=[ u( x)\times x]\,\text{curl} u(x),\quad  x\in \sph^2.
\end{align*}
Therefore, we have the following:
\begin{defn}
    Let $u$ be a tangent vector field on $\sph^2$, and let the vector field $\psi$ be normal to $\sph^2$. We set
    \begin{align}
        \text{curl}\, u&=(\hat{ x}\cdot(\tilde{\vgrad}\times\tilde{ u})) |_{\sph^2},\quad \text{Curl}\,\psi=(\tilde{\vgrad}\times \mathbf{\psi})|_{\sph^2}.
    \end{align}
\end{defn}
\par\bigskip\noindent
The first equation above indicates a projection of $\nabla\times\tilde{u}$ onto the normal direction, while the 2nd equation means a restriction of $\nabla\times\psi$ to the tangent field on $\sph^2$. The definitions presented above do not depend on the extensions $\tilde{u}$ and $\tilde{\psi}.$ A vector field $\psi$ normal to $\sph^2$ will often be identified with a scalar function on $\sph^2$ when it is convenient to do so. The following expressions describe the relationships among Curl of a scalar function $\psi$, Curl of a normal vector field $ \vec{w} =w\hat{ x}$, and curl of a vector field $ v$ on $\sph^2$.% The surface divergence and $\Delta$ operators are given as
\begin{align}\label{3curl}
    \text{Curl}\,\psi&=-\hat{ x}\times\nabla\psi,\quad \text{Curl}\, \vec{w} =-\hat{ x}\times\nabla w ,\quad \text{curl } v=-\text{div}(\hat{ x}\times v).
\end{align}
Let 
\begin{align}
    (\vgrad_{ v} u)( x)&=\pi_{ x}\left(\sum^3_{i=1}\tilde{ v}_i( x)\partial_i\tilde{ u}( x)\right)=\pi_{ x}\left((\tilde{v}(x)\cdot\tilde{\nabla})\tilde{ u}( x)\right),\quad  x\in\sph^2.
\end{align}
Invoking \eqref{ocrossu} and the formula \begin{align*}
    (\tilde{ u}\cdot\tilde\nabla)\tilde{ u}=\tilde{\nabla}\frac{|\tilde{ u}^2|}{2}-\tilde{ u}\times(\tilde\nabla\times\tilde{ u}),
\end{align*}
we find that the covariant derivative $\vgrad_{ u} u$ takes the form 
\begin{align*}
    \vgrad_{ u} u=\nabla\frac{|{ u}^2|}{2}-\pi_x(\tilde{ u}\times(\tilde\nabla\times\tilde{ u})).
\end{align*}
In particular, using \eqref{ocrossu} we obtain 
\begin{align*}
    \vec{\nabla}_{ u} u=\nabla\frac{| u|^2}{2}- \pi_x(\tilde{ u}\times(\tilde\nabla\times\tilde{ u})).
\end{align*}
% The operators $\nabla$ and $\nabla\cdot$ are the surface gradient and divergence, respectively. The covariant derivative $\vgrad_{ u} u$ is the nonlinear term in the equation. 
 The surface diffusion operator acting on vector fields on $\sph^2$ is denoted by $\mathbf{\Delta}$ (known as the Laplace de Rham operator) and is defined as
\begin{align}\label{surdiffuse}
    \mathbf{\Delta} v=\nabla\text{div } v-\text{Curl curl } v.
\end{align}
Using (\ref{3curl}), one can derive the following relations connecting the above operators:
\begin{align}\label{relateop}
    \text{div Curl }v&=0,\quad\text{curl Curl v}=-\hat{x}\Delta v,\quad \mathbf{\Delta}\text{Curl } v=\text{Curl}\Delta v.
\end{align}
Next, we recall the definition of the Ricci tensor Ric of the 2D sphere $\sph^2$. Since 
\[
    \text{Ric}=
    \begin{pmatrix}
    E & F\\ F & C,
\end{pmatrix}\]
where the coefficients $E,F,G$ of the first fundamental form are given by 
\begin{align*}
    E&= x_{\theta}\cdot  x_{\theta}=1;\\
    F&= x_{\theta}\cdot  x_{\phi}=x_{\phi}\cdot x_{\theta}=0;\\
    C&= x_{\phi}\cdot x_{\phi}=\sin^2\theta,
\end{align*}
we find that 
\begin{align}\label{eq_ric}
    \text{Ric}=
\begin{pmatrix}
    1 & 0\\ 0 & \sin^2\theta
\end{pmatrix}\,.
\end{align}
Finally we define the stress tensor $\mathbf{L}$. It is given by
\begin{align*}
    \mathbf{L}=\vlap+2\text{Ric},
\end{align*}
where $\vlap$ is the Laplace-de Rham operator.  \subsection{Function spaces on the sphere}\label{ssec:func}
In what follows we denote by $dS$ the surface measure on $\sph^2$. In the spherical coordinates one has locally, $dS=\sin\theta\, d\theta d\phi.$ For $p\in[1,\infty)$, we denote by $L^p=L^p(\sph^2,\R)$ of $p$-integrable scalar function on $\sph^2$, endowed with the norm
\begin{align*}
 |v|_{L^p}=\left(\int_{\sph^2}|v( x)|^p dS( x)\right)^{1/p}.
\end{align*}
For $p=2$ the corresponding inner product is denoted by
\begin{align*} (v_1,v_2)=(v_1,v_2)_{L^2(\sph^2)}=\int_{\sph^2}v_1 v_2\,dS.
\end{align*}
On the other hand, we denote by $\mathbb{L}^p=\mathbb{L}^p(\sph^2)$ the space $L^p(\sph^2, T\sph^2)$ of vector fields $ v:\sph^2\to T\sph^2$ endowed with the norm
\begin{align*}
    | v|_{L^p}=\left(\int_{\sph^2}| v( x)|^p dS( x)\right)^{1/p},
\end{align*}
where, for $ x\in\sph^2$, $| v( x)|$ denotes the length of $ v( x)$ in the tangent space $T_{ x}\sph^2.$ For $p=2$ the corresponding inner product is denoted by
\begin{align*}
    ( v_1, v_2)=( v_1, v_2)_{\mathbb{L}^2}=\int_{\sph^2} v_1\cdot v_2\, dS.
\end{align*}
In this paper, the induced norm on $\mathbb{L}^2(\mathbb{S}^2)$ is denoted by $|\cdot|$. For other inner product spaces, say $V$ with the inner product $(\cdot,\cdot)_V$, the associated norm is denoted by $|\cdot|_V.$

The following identities hold for appropriate real valued scalar functions and vector fields on $\sph^2$, see (2.4)-(2.6) \cite{MR1055527}:
\begin{align}
    (\nabla\psi, v)&=-(\psi,\text{div}\, v),\label{gradpsi}\\
    (\text{Curl }\psi, v)&=(\psi,\text{curl } v),\label{Curlpsi}\\
    (\text{Curlcurl }w ,z)&=(\text{curl }w ,\text{curl }z).\label{Ccurl}
\end{align}
In (\ref{Curlpsi}), the $\mathbb{L}^2(\sph^2)$ inner product is used on the left hand side while the $L^2(\sph^2)$ is used on the right hand side. Throughout this paper, we identify a normal vector field $ \vec{w} $ with a scalar field $w$ and by $ \vec{w} =\hat{ x}w$. We hence put
\begin{align}
    (\psi, \vec{w} ) &:=(\psi,w)_{L^2(\sph^2)},\quad \text{if}\,\, \vec{w} =\hat{x}w,\quad \psi,w\in L^2(\sph^2).
\end{align}
%Using the identity (\ref{gradpsi}) the unknown pressure can be eliminated from the first equation in (\ref{nsep}) through the weak formulation.
Let us now introduce the Sobolev spaces $H^1(\sph^2)$ and $\mathbb{H}^1(\sph^2)$ of scalar functions and vector fields on $\sph^2$. 
Let $\psi$ be a scalar function and let  $ u$ be a vector field on $\sph^2$, respectively. For $s\ge 0$ we define
\begin{align}   |\psi|^2_{H^1(\sph^2)}=|\psi|^2_{L^2(\sph^2)}+|\nabla\psi|^2_{L^2(\sph^2)},
\end{align}
and
\begin{align}\label{h1norm}
    | u|^2_{\mathbb{H}^1(\sph^2)}&=| u|^2+|\nabla\cdot u|^2+|\text{Curl}\, u|^2\,.
\end{align}    
One has the following Poincar\'e inequality
\begin{align}\label{eq_poinc}
    \lambda_1| u|^2\le |\text{div}\, u|^2+|\text{Curl}\,u|^2,\quad u\in\mathbb{H}^1(\sph^2),
\end{align}
where $\lambda_1>0$ is the first positive eigenvalue of the Laplace-Hodge operator, see below. 
By the Hodge decomposition theorem in Riemannian geometry \cite{MR972259}, the space of $C^{\infty}$ smooth vector field on $\mathbb{S}^2$ can be decomposed into three components:
\begin{align*} C^{\infty}(T\mathbb{S}^2)=\mathcal{G}\oplus\mathcal{V}\oplus\mathcal{H},
\end{align*}
where
\begin{align*}
    \mathcal{G}=\{\nabla\psi\in C^{\infty}(\mathbb{S}^2)\},\quad \mathcal{V}=\{\text{Curl}\psi\in C^{\infty}(\mathbb{S}^2)\},
\end{align*}
and $\mathcal{H}$ is the finite-dimensional space of harmonic vector fields. Since the sphere is simply connected, that is, the map $\mathbb{S}^2\to\mathbb{S}^2$ is a diffeomorphism, we have  $\mathcal{H}=\{0\}.$ The condition of orthogonality to $\mathcal{H}$ is dropped out. We introduce the following spaces:
\begin{align}\label{abstractH}
    H&:=\{ u\in\mathbb{L}^2(\mathbb{S}^2): \nabla\cdot  u=0\},\\   V&:=H\cap\mathbb{H}^1(\mathbb{S}^2)\notag.
\end{align}
In other words, $H$ is the closure of the
\begin{align*}
    \{ u\in C^{\infty}(T\mathbb{S}^2):\nabla\cdot  u=0\}
\end{align*}
in the $\mathbb{L}^2$ norm $| u|=( u, u)^{1/2}$, where $ u=( u_{\theta}, u_{\phi})$ and
\begin{align}
    ( u, v)=\int_{\mathbb{S}^2} u_{\theta} v_{\theta}+u_{\phi} v_{\phi}d S(x).
\end{align}
The space $V$ is the closure of
 \begin{align*}
     \{ u\in C^{\infty}(T\mathbb{S}^2):\nabla\cdot u=0\}
 \end{align*}
 in the norm of $\mathbb H^1\left(\mathbb S^2\right)$. 
Since $V$ is densely and continuously embedded into $H$, and $H$ can be identified with its dual $H'$, one has the following Gelfand triple:
\begin{align}
    V\subset H\cong H'\subset V'.
\end{align}
\subsection{Stokes operator}\label{ssec:weakformu}
We will recall first that the Laplace-Beltrami operator on $\mathbb S^2$
\begin{align}\label{LB}
	\Delta f = \frac{1}{\sin\theta}\frac{\partial}{\partial\theta}\left(\sin\theta\frac{\partial f}{\partial\theta}\right)+\frac{1}{\sin^2\theta}\frac{\partial^2 f}{\partial\phi^2}
\end{align}
 can be defined in terms of spherical harmonics $Y_{l,m}$ as follows (See also \cite{MR1022665}).  For $\theta\in [0,\pi]$, $\phi\in [0,2\pi)$, we define 
\begin{align} Y_{l,m}(\theta,\varphi)=\left[\frac{(2l+1)(l-|m|)!}{4\pi(l+|m|)!}\right]^{1/2}P^m_l(\cos\theta)e^{im\varphi},\quad m=-l,\cdots,l,
\end{align}     
with $P^m_l$ being the associated Legendre polynomials. The family $\left\{Y_{l,m}:\, l=0,1,\ldots,\,\,m=-l,\ldots,l\right\}$ form an orthonormal basis in $L^2\left(\mathbb S^2\right)$
and we then can define the well known Laplace-Beltrami operator on $\sph^2$ (\ref{LB})
by putting 
	\[\Delta Y_{l,m}=-l(l+1)Y_{l,m}.\]
Then one can extend by linearity to all functions $f:L^2\left(\mathbb S^2\right)$ such that 
	\[\sum_{l=0}^\infty\sum_{m=-l}^ll^2(l+1)^2\left(f,Y_{l,m}\right)^2_{L^2\left(\mathbb S^2\right)}<\infty\,.\]

We consider the following linear Stokes problem \cite{MR3306386}. That is, given $f\in V'$, find $v\in V$ such that
\begin{align}
    \nu\text{Curlcurl} u-2\nu\text{Ric}( u)+\nabla p=f,\quad\text{div}\,u=0.
\end{align}    
By taking the inner product of the first equation above with a test field $v\in V$, and then using (\ref{Ccurl}), the pressure term drops and we obtain
\begin{align*}
    \nu(\text{curl}\,u,\text{curl}\, v)-2\nu(\text{Ric}\,u, v)=(  f,v)\quad\forall\, v\in V.
\end{align*}
Without loss of generality, let $\nu = 1$, we define a bilinear form $a: V\times V\to\R$ by
\begin{align}\label{bf1}
	a(u,v)=(-Lu,v).
\end{align}
By performing some elementary calculations, one can write (\ref{bf1}) as follows:
\begin{align}\label{bf2}
    a( u, v) :=(\text{curl}\, u,\text{curl}\, v)-2(\text{Ric}\, u, v),\quad
     u,\, v\in V.
\end{align}
In view of (\ref{h1norm}) and formula \eqref{eq_ric} for the Ricci tensor on $\sph^2$, the bilinear form $a$ satisfies
\begin{align}
    a( u, v)\le| u|_{\mathbb{H}^1}| v|_{\mathbb{H}^1}
\end{align}
and so it is continuous on $V$. So, by the Riesz representation theorem, there exists a unique operator $\mathcal{A}: V\to V'$ where $V'$ is the dual of $V$, such that $a( u, v)=(\mathcal{A} u, v)$, for $ \{u, v\}\in V.$ Let us recall that by the results in \cite{MR1187618}, p.1446, we also have
\begin{align*}
	a(u,u)=|\text{Def}\,u|^2_2,\quad u\in V
\end{align*}     
where $\text{Def}$ is the deformation tensor (See \cite{MR1187618} for more details). Then by the Poincar\'e inequality \eqref{eq_poinc} we find that $a( u, u)\ge\alpha| u|^2_{V}$, for a certain $\alpha>0$, which implies that $a$ is coercive in $V$. Hence, by the Lax-Milgram theorem, the operator $\mathcal{A}:V\to V'$ is an isomorphism. Let  $ A $ be a restriction of $\mathcal {A}$ to $H$:
\begin{align}\label{domA}      
    \begin{cases}
        D( A )& :=\{u\in V :\,\mathcal Au\in H\}, \\
         A u& :=\mathcal{A}u,\quad u\in D( A ).
    \end{cases}
\end{align}
It is well known  (see for instance \cite{tanabe1979equations}, Theorem 2.2.3 ) that $A$ is positive definite, self-adjoint in $H$ and $D(A^{1/2})=V$ with equivalent norms. 
 Furthermore, for some positive constants $c_1,c_2$ we have 
 \[c_1|u|_{D(A)}\le|Au|\le c_2|u|_{D(A)}\,,\]
 % and
 \begin{align}\label{Anorm}
     \langle   A u, u\rangle=(( u, u))=| u|_V=|\nabla  u|^2=|Du|^2,\quad  u\in D(A).
 \end{align}
The spectrum of A consists of an infinite sequence of eigenvalues $\lambda_l$. Using the stream function $\psi_l$ for which $ w_l=\text{Curl}\psi_{l,m}$ and identities (\ref{relateop}), one can show that each $\lambda_l$ is in fact the vector of eigenvalues of the Laplace-Beltrami operator $\Delta$. That is $\lambda_l =l(l+1)$. Additionally, there exists an orthonormal basis $\left(\vec{Z}_{l,m}\right)_{l\ge 1}$ of $H$ consisting of the  eigenvector of $ A $, where 
\begin{align}    \vec{Z}_{l,m}=\lambda^{-1/2}_l\text{Curl}Y_{l,m},\quad l=1,\ldots,m=-l,\ldots, l.
\end{align}
Therefore, for any $v\in H$, one has,
\begin{align}   v=\sum^{\infty}_{l=1}\sum^l_{m=-l}\hat{v}_{l,m}\mathbf{Z}_{l,m},\quad \widehat{v}_{l,m}=\int_{\sph^2}v\cdot\vec{Z}_{l,m}dS=(v,\vec{Z}_{l,m}).
\end{align}
      
An equivalent definition of the operator $A$ can be given using the so-called Leray-Helmhotz projection $P$ that is defined 
\begin{comment}
Recall first, that the Riesz transform of a function $f\in L^2(\mathbb R^2;\mathbb R)$ is given by  \cite{MR3669657}
\begin{align*}
    R_j f(x)=\left(\frac{1}{\pi}\right)^{3/2}\lim_{\varepsilon\to 0}\int_{\R^2\backslash B_{\varepsilon}(0)}\frac{x_j-y_j}{|x-y|^{3}}f(y)dy\,.
\end{align*}
We will define an operator $P:L^2(\sph^2)\to H$ in terms of the Riesz transform as
\begin{align*}
    (P\mathbf v)_j(x)=\sum^n_{k=1}(\delta_{jk}-R_jR_k)v_k(x)\quad\text{with}\quad j=1,\cdot,n
\end{align*}
\end{comment}
as an orthogonal projection from $\mathbb{L}^2(\sph^2)$ onto $H$. Let $\mathbb{H}^2(\sph^2)$ denote the domain of the Laplace-Hodge operator in $H$ endowed with the graph norm. 
It can be shown from \cite{MR2569498} that $D( A )=\mathbb{H}^2(\sph^2)\cap V$ and $ A =-P(\vlap+2\text{Ric})$. Therefore, we obtain an equivalent definition of the Stokes operator on the sphere. 
\begin{defn}
    The Stokes operator $A$ on the sphere is defined as
    \begin{align}\label{optA}
         A : D(A)\subset H\to H, \quad  A =-P(\mathbf{\Delta}+2\text{Ric}),\quad D( A )=\mathbb{H}^2(\mathbb{S}^2)\cap V,
    \end{align}

%\cite{MR2986590}
where $\mathbf\Delta$ is the Laplace-De Rham operator. 
\end{defn}
It can be shown that $V=D( A ^{1/2})$ when endowed with the norm $|x|_V=| A ^{1/2}x|$ and the inner product $((x,y))=\langle  A x,y\rangle$. After identification of $H$ with its dual space we have $V\subset H\subset V'$ with continuous dense injection. The dual pairing between $V$ and $V'$ is denoted by $(\cdot,\cdot)_{V\times V'}.$
Moreover, there exist positive constants $c_1$, $c_2$ such that 
\begin{align*}
    c_1|u|^2_V\le (Au, u)\le c_2 |u|^2_V,\quad u\in D(A).
\end{align*}

Let us now introduce the Sobolev spaces $H^s(\sph^2)$ and $\mathbb{H}^2(\sph^2)$ of scalar functions and vector fields on $\sph^2$. 
Let $\psi$ be a scalar function and let  $ u$ be a vector field on $\sph^2$, respectively. For $s\ge 0$ we define
\begin{align}  |\psi|^2_{H^s(\sph^2)}=|\psi|^2_{L^2(\sph^2)}+|(-\Delta)^{s/2}\psi|^2_{L^2(\sph^2)},
\end{align}
and
\begin{align}
    | u|^2_{\mathbb{H}^s(\sph^2)}=| u|^2+|(-\vlap)^{s/2} u|^2,
\end{align}
where $\Delta$ is the Laplace-Beltrami operator and $\vlap$ is the Laplace-de Rham operator on the sphere.
Note that, for $k=0,1,2,\cdots$ and $\theta\in (0,1)$ the space $H^{k+\theta}(\sph^2)$ can be defined as the interpolation space between $H^k(\sph^2)$ and $H^{k+1}(\sph^2)$. One can apply the same procedure for $H^{k+\theta}(\sph^2)$, due to\cite{BGQ17}. 
The fractional power $A^{s/2}$ of the Stokes operator $A$ in $H$ for any $s\ge 0$ is given by
\begin{align*}
    D(A^{s/2})&=\left\{v\in H: v= \sum^{\infty}_{l=1}\sum^{l}_{m=-l}\hat{v}_{l,m}\vec{Z}_{l,m},\,\,\sum^{\infty}_{l=1}\sum^{l}_{m=-l}\lambda^s_{l}|\hat{v}_{l,m}|^2<\infty\right\},\\
    A^{s/2}v&:=\sum^{\infty}_{m=1}\sum^{l}_{m=-l}\lambda^{s/2}_{l}\hat{v}_{l,m}\vec{Z}_{l,m}\in H.
\end{align*} 
The Coriolis operator $ \mathbf{C}_1 :\mathbb{L}^2(\mathbb{S}^2)\to\mathbb{L}^2(\mathbb{S}^2)$ is defined by the formula\footnote{The angular velocity vector of Earth is denoted by $\Omega$ consistant with geophysical fluid dynamics literature. It shall not be confused with the notation for probility space $\Omega$ used in this paper.}
\begin{align}
    ( \mathbf{C}_1 v)( x)=2\Omega( x\times  v( x))\text{cos}\theta,\quad  x\in\mathbb{S}^2.
\end{align}
It is clear from the above definition that $ \mathbf{C}_1$ is a bounded linear operator defined on $\mathbb{L}^2(\mathbb{S}^2).$ In what follows we will need the operator $\mathbf{C}=P \mathbf{C}_1$ which is well defined and bounded in $H.$ Furthermore, for $u\in H$,
\begin{align}\label{corio}
    ( \mathbf{C} u, u)=( \mathbf{C}_1  u,P u)=\int_{\mathbb{S}^2}2\Omega\text{cos}\theta(( x\times  u)\cdot u( x))dS( x)=0.
\end{align}
In addition,
\begin{lem}\label{corioinner}
    For any smooth function $ u$ and $s\ge 0$
    \begin{align}
        ( \mathbf{C} u,  A ^s  u)=0.
    \end{align}
\end{lem}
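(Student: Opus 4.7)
My plan is to prove that the Coriolis operator $\mathbf{C}$ commutes with the Stokes operator $A$ (hence with $A^s$ for every $s\ge 0$), and then to deduce the lemma from \eqref{corio} via self-adjointness of $A^{s/2}$. Concretely, once commutation is established, for any smooth $u$ in the domain of $A^s$ I can set $w:=A^{s/2}u\in H$ and compute
\begin{align*}
(\mathbf Cu,A^su)=(A^{s/2}\mathbf Cu,A^{s/2}u)=(\mathbf CA^{s/2}u,A^{s/2}u)=(\mathbf Cw,w)=0,
\end{align*}
the last equality being \eqref{corio}. Hence the entire proof reduces to showing $[\mathbf C,A]=0$.

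For the commutation, I would show that each eigenvector $\vec Z_{l,m}$ of $A$ is also an eigenvector of $\mathbf C$. Since $\vec Z_{l,m}$ is proportional to $\text{Curl}\,Y_{l,m}$, the pointwise identity $x\times(\hat x\times\nabla\psi)=-\nabla\psi$ (valid because $\nabla\psi$ is tangential to $\sph^2$) gives $x\times\text{Curl}\,\psi=\nabla\psi$, so $\mathbf C_1\text{Curl}\,Y_{l,m}=2\Omega\cos\theta\,\nabla Y_{l,m}$. I then apply the Leray projection in the form $P=-\text{Curl}\,\Delta^{-1}\,\text{curl}$, which is valid because $\mathcal H=\{0\}$ on the simply-connected sphere. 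Using the formula $\text{curl}\,u=-\text{div}(\hat x\times u)$ from \eqref{3curl} together with $\text{div}\,\text{Curl}=0$, a short calculation yields $\text{curl}(\cos\theta\,\nabla Y_{l,m})=-\partial_\phi Y_{l,m}=-im\,Y_{l,m}$; inverting the Laplace-Beltrami operator on $Y_{l,m}$ (eigenvalue $-\lambda_l$) then gives
\begin{align*}
\mathbf C\vec Z_{l,m}=-\frac{2\Omega im}{\lambda_l}\,\vec Z_{l,m}.
\end{align*}
Thus $\mathbf C$ is diagonal in the same orthonormal basis as $A$, the two operators commute, and by the spectral theorem $[\mathbf C,A^s]=0$ for every $s\ge 0$.

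The main technical obstacle is the Hodge-projection calculation that pins down $P(\cos\theta\,\nabla Y_{l,m})$ as a precise scalar multiple of $\text{Curl}\,Y_{l,m}$; this is what makes the simultaneous diagonalization of $\mathbf C$ and $A$ work in the first place. Once it is verified, the two-line display above closes the proof. All the other ingredients---the spectral expansion of $u$ in $\{\vec Z_{l,m}\}$, the self-adjointness of $A^{s/2}$ on $D(A^s)$, and the pointwise identity $(x\times v)\cdot v=0$ that drives \eqref{corio}---are already in place earlier in the excerpt.
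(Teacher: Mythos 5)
Your argument is correct, but it is a genuinely different route from the paper's: the paper disposes of the case $s=0$ by the pointwise identity $(\omega\times u)\cdot u=0$ and then simply cites Lemma~5 of an external reference for $s>0$, whereas you give a self-contained proof by simultaneous diagonalization. Your key computation checks out: $x\times\text{Curl}\,\psi=\nabla\psi$ on tangential gradients, $\text{curl}(\cos\theta\,\nabla Y_{l,m})=-\partial_\phi Y_{l,m}=-imY_{l,m}$ (via $\text{curl}\,u=-\text{div}(\hat x\times u)$, $\text{div}\,\text{Curl}=0$ and $\nabla\cos\theta=-\sin\theta\,e_\theta$), and the representation $P=-\text{Curl}\,\Delta^{-1}\text{curl}$ of the Leray projection, which is legitimate here because $\mathcal H=\{0\}$ and follows from \eqref{3curl} and \eqref{relateop}. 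Two small points are worth making explicit. First, the eigenvalue $-2\Omega im/\lambda_l$ is purely imaginary, as it must be for the skew-symmetric real operator $\mathbf C$, so the diagonalization lives in the complexification of $H$; on the real space it is cleaner to observe only that your calculation shows $\mathbf C$ preserves each finite-dimensional eigenspace $E_l=\mathrm{span}\{\vec{Z}_{l,m}:|m|\le l\}$ of $A$, which already gives $[\mathbf C,A^{s/2}]=0$ without needing the exact eigenvalue. Second, the step $(\mathbf Cu,A^su)=(A^{s/2}\mathbf Cu,A^{s/2}u)$ needs $\mathbf Cu\in D(A^{s/2})$; this follows from your formula since $|2\Omega m/\lambda_l|\le 2\Omega$ is uniformly bounded, so $\mathbf C$ maps $D(A^{s/2})$ into itself. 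What your approach buys is a proof that does not outsource the hard case to the literature and that exhibits the precise spectral action of the Coriolis operator, at the cost of the Hodge-projection computation; the paper's approach is shorter but leaves the reader dependent on the cited lemma.
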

\begin{proof}
    The case $s=0$ is obvious as in the line above, due to the fact that $(\omega\times  u)\cdot  u=0$. For $s>0$ we refer readers to Lemma 5 in \cite{MR2869773}.
\end{proof}
\par\bigskip\noindent
Let $X=H\cap\mathbb L^4\left(\mathbb S^2\right)$ be endowed with the norm 
\[|v|_X=|v|_H+|v|_{\mathbb L^4\left(\mathbb S^2\right)}.\]
Then $X$ is a Banach space. It is known that the Stokes operator $A$ generates an analytic $C_0$-semigroup $\{e^{-tA}\}_{t\ge 0}$ in $X$ (see Theorem A.1 in \cite{MR3306386}). Since the Coriolis operator $\mathbf{C}$ is bounded on $X$, we can define in $X$ an operator
\begin{align*}
    \hat{A}=\nu A+\mathbf{C},\quad D(\hat{A})=D(A),
\end{align*}
with $\nu>0.$

\begin{lem}\label{lem1.2}
    Suppose that $V\subset H\cong H'\subset V'$ is a Gelfand triple of Hilbert spaces. If a function $ u$ being $L^2(0,T;V)$ and $\partial_t  u$ belongs to $L^2(0,T;V')$ in weak sense, then $ u$ is a.e. equal to a continuous function from $[0,T]$ to $H$; the real function $| u|^2$ is absolutely continuous; and, in the weak sense one has
    \begin{align}\label{chainrule}
    \partial_t | u(t)|^2=2\langle\partial_t  u(t), u(t)\rangle.  
    \end{align}
\end{lem}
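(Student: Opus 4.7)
The plan is to invoke the classical Lions--Magenes argument: first prove the chain rule identity for smooth functions, then approximate a general $u$ in the space $W(0,T):=\{v\in L^2(0,T;V): v'\in L^2(0,T;V')\}$ by smooth ones, and finally pass to the limit in the resulting integral identity. Concretely, if $u\in C^1([0,T];V)$, then $t\mapsto |u(t)|^2_H$ is $C^1$ and elementary differentiation together with the identification $H\cong H'$ in the Gelfand triple gives
\[
|u(t)|^2-|u(s)|^2 = 2\int_s^t\langle u'(\tau),u(\tau)\rangle_{V'\times V}\,d\tau,\qquad 0\le s\le t\le T.
\]
The goal is to extend this identity to arbitrary $u\in W(0,T)$ and then read off both the $C([0,T];H)$-continuity and (\ref{chainrule}).

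To produce the approximation, I would first extend $u$ to a slightly larger interval $(-\delta,T+\delta)$ by reflection about the endpoints, obtaining $\tilde u\in L^2_{\mathrm{loc}}(V)$ whose weak derivative (also reflected) lies in $L^2_{\mathrm{loc}}(V')$. Convolving in time with a standard mollifier $\rho_\varepsilon$ yields $u_\varepsilon:=\tilde u\ast\rho_\varepsilon\in C^\infty([0,T];V)$ with $u_\varepsilon'=\tilde u'\ast\rho_\varepsilon$, and one checks the familiar convergences $u_\varepsilon\to u$ in $L^2(0,T;V)$ and $u_\varepsilon'\to u'$ in $L^2(0,T;V')$. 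For each $\varepsilon$ the smooth chain rule above holds, and since the product $\langle u_\varepsilon',u_\varepsilon\rangle$ converges to $\langle u',u\rangle$ in $L^1(0,T)$ (by Hölder applied to the bilinear $V'\times V$ pairing together with the two strong convergences), the right--hand side of the integral identity passes to the limit.

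From the integral identity applied to the difference $u_{\varepsilon_1}-u_{\varepsilon_2}$, one obtains
\[
\sup_{t\in[0,T]}|u_{\varepsilon_1}(t)-u_{\varepsilon_2}(t)|^2 \le C\bigl(\|u_{\varepsilon_1}-u_{\varepsilon_2}\|_{L^2(V)}^2+\|u_{\varepsilon_1}'-u_{\varepsilon_2}'\|_{L^2(V')}^2\bigr),
\]
after first integrating in $s$ over a small interval to bound the pointwise value by the $L^2$ norm, so $\{u_\varepsilon\}$ is Cauchy in $C([0,T];H)$. Its continuous limit must coincide a.e.\ with $u$, which furnishes the continuous representative and, by passing to the limit in the integral identity, the absolute continuity of $|u(\cdot)|^2$ and the pointwise identity (\ref{chainrule}) in the weak/distributional sense. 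The main delicacy I expect is the careful handling of the endpoint behaviour of the mollification (which is why the reflection extension is used) and the rigorous verification that the duality pairing $\langle u',u\rangle$ is integrable on $(0,T)$ and coincides with $\tfrac{1}{2}\tfrac{d}{dt}|u|^2$ distributionally; both are straightforward once the smooth approximation is in place.
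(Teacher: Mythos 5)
Your argument is correct: it is the classical Lions--Temam proof (extension by reflection, mollification in time, the smooth chain rule, a Cauchy estimate in $C([0,T];H)$ obtained by integrating the identity in $s$, and passage to the limit), and all the steps you flag as delicate do go through exactly as you describe. The paper itself states this lemma without any proof, treating it as a known classical fact (it is Lemma III.1.2 in Temam's Navier--Stokes book), so there is nothing in the paper to compare against; your write-up simply supplies the standard argument the author implicitly relies on.
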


\begin{prop}\label{analytics}
    The operator $\hat{A}$ with the domain $D(\hat{A})=D(A)$ generates a strongly continuous and analytic semigroup $\{e^{-tA}\}_{t\ge 0}$ in $X$. In particular, there exist $M\ge 1$ and $\mu>0$ such that 
    \begin{align}      |e^{-t\hat{A}}|_{\mathcal{L}(X,X)}\le Me^{-\mu t},\quad t\ge 0\,;
    \end{align}
and for any $\delta>0$ there exists $M_{\delta}\ge 1$ such that
\begin{align}\label{analyticexp}
    |\hat{A}^{\delta}e^{-t\hat{A}}|_{\mathcal{L}(X,X)}\le M_{\delta} t^{-\delta}e^{-\mu t},\quad t>0.
\end{align}    
\end{prop}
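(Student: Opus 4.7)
The plan is to view $\hat{A}$ as a bounded perturbation of $\nu A$ on $X$ and then invoke the classical perturbation theorem for generators of analytic semigroups. By Theorem~A.1 of \cite{MR3306386}, quoted just above the proposition, the Stokes operator $A$ already generates an analytic $C_0$-semigroup on $X$ with an exponential bound $|e^{-tA}|_{\mathcal{L}(X,X)} \le M_0 e^{-\mu_0 t}$, and $\nu A$ inherits this property with rate $\nu\mu_0$; only the Coriolis term $\mathbf{C}$ needs to be absorbed.

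The first concrete step would be to verify that $\mathbf{C} = P\mathbf{C}_1$ is bounded on $X = H \cap \mathbb{L}^4(\mathbb{S}^2)$. For $x \in \mathbb{S}^2$ the pointwise estimate $|\mathbf{C}_1 v(x)| \le 2\Omega|\cos\theta|\,|x \times v(x)| \le 2\Omega\,|v(x)|$ gives $|\mathbf{C}_1 v|_{\mathbb{L}^p} \le 2\Omega\,|v|_{\mathbb{L}^p}$ for $p \in \{2,4\}$; composing with the $\mathbb{L}^p$-boundedness of the Leray--Helmholtz projection $P$ on $\mathbb{S}^2$ (via the Hodge decomposition already invoked in Section~\ref{ssec:weakformu}) yields $\mathbf{C} \in \mathcal{L}(X,X)$. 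The classical bounded-perturbation theorem for analytic semigroup generators then gives at once that $\hat{A} = \nu A + \mathbf{C}$, with domain $D(\hat{A}) = D(A)$, generates an analytic $C_0$-semigroup on $X$ (with the same angle of analyticity, up to a loss depending on $\|\mathbf{C}\|$).

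The main obstacle will be the exponential decay. A Dyson--Phillips expansion yields
\[
|e^{-t\hat{A}}|_{\mathcal{L}(X,X)} \le M_0\,\exp\!\bigl((M_0\,\|\mathbf{C}\|_{\mathcal{L}(X,X)} - \nu\mu_0)\,t\bigr),
\]
so the existence of a positive $\mu$ requires $\nu\mu_0 > M_0\,\|\mathbf{C}\|_{\mathcal{L}(X,X)}$. This is not automatic: either it is an implicit hypothesis on the viscosity--rotation ratio $\nu/\Omega$, or it must be extracted from the skew-symmetry identity $(\mathbf{C}u,u) = 0$ of (\ref{corio}). Indeed, on $H$ this identity prevents $\mathbf{C}$ from shifting the spectral bound of $\nu A$ at all; one then hopes to interpolate the sharp $H$-estimate against the coarser $\mathbb{L}^4$-estimate through the graph norm $|\cdot|_X = |\cdot|_H + |\cdot|_{\mathbb{L}^4}$, thereby retaining a strictly negative rate on $X$.

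Finally, the fractional-power bound $|\hat{A}^\delta e^{-t\hat{A}}|_{\mathcal{L}(X,X)} \le M_\delta t^{-\delta} e^{-\mu t}$ will be a routine consequence of the two previous steps. Once we know $\hat{A} + \mu I$ generates the bounded analytic semigroup $\{e^{\mu t} e^{-t\hat{A}}\}_{t \ge 0}$ on $X$, standard analytic semigroup theory gives $\|\hat{A}^\delta e^{-t\hat{A}}\|_{\mathcal{L}(X,X)} \lesssim t^{-\delta}$ uniformly on $t \in (0,1]$, and combining this with the exponential decay of $e^{-t\hat{A}}$ for $t \ge 1$ closes out the claimed estimate.
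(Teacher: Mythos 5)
Your overall architecture matches the paper's: the paper's proof (it defers to Proposition 5.3 of \cite{MR3306386}) likewise treats $\hat{A}=\nu A+\mathbf{C}$ as a bounded perturbation of the analytic generator $\nu A$ on $X$ (via Theorem A.1 of \cite{MR3306386} and the bounded-perturbation corollary in Pazy), and likewise the exponential decay cannot come from the crude Dyson--Phillips bound but must be extracted from the skew-symmetry $(\mathbf{C}u,u)=0$. You have correctly diagnosed where the difficulty sits. The fractional-power estimate \eqref{analyticexp} is indeed routine once the exponentially decaying analytic semigroup is in hand.

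However, the decisive step --- upgrading the decay from $H$ to the operator norm on $X=H\cap\mathbb{L}^4$ --- is left as a hope (``one then hopes to interpolate\dots thereby retaining a strictly negative rate on $X$''), and as written this is a genuine gap: a pointwise-in-time interpolation of $|u(t)|_{\mathbb{L}^4}$ through $|u(t)|_H^{1/2}|u(t)|_V^{1/2}$ does not by itself control $|e^{-t\hat A}|_{\mathcal{L}(X,X)}$, because the $V$-norm of $u(t)$ is not dominated by $|u_0|_X$ without a further smoothing or integrated estimate. The paper closes this step differently and more cleanly: the energy identity $\tfrac12\tfrac{d}{dt}|u(t)|^2+\nu|u(t)|_V^2=0$ (using $(\mathbf{C}u,u)=0$) together with the Poincar\'e inequality gives both $|u(t)|\le e^{-\lambda_1\nu t}|u_0|$ and $\int_0^\infty|u(t)|_V^2\,dt<\infty$; the Ladyzhenskaya inequality \eqref{ladyzhen} and Cauchy--Schwarz then yield
\begin{align*}
\int_0^\infty|u(t)|_{\mathbb{L}^4}^2\,dt\le C\left(\int_0^\infty|u(t)|^2\,dt\right)^{1/2}\left(\int_0^\infty|u(t)|_V^2\,dt\right)^{1/2}<\infty,
\end{align*}
and the Datko--Pazy theorem (Theorem 4.1, p.~116 of Pazy) applied in $X$ converts this square-integrability in time into the uniform bound $|e^{-t\hat A}|_{\mathcal{L}(X,X)}\le Me^{-\mu t}$. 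To complete your argument you would need either to invoke Datko--Pazy as the paper does, or to make your interpolation rigorous by inserting the analytic-smoothing estimate $|u(t)|_V\lesssim t^{-1/2}e^{-\mu t}|u_0|_H$ before applying Ladyzhenskaya; neither is present in the proposal.
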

\begin{proof}
See the proof of Proposition 5.3 in \cite{MR3306386}.
\end{proof}
\par\bigskip\noindent
Now consider the trilinear form $b$ on $V\times V\times V$, defined as
\begin{align}
    b( v, w, z)=(\nabla_{ v} w, z)=\int_{\mathbb{S}^2}\nabla_v w\cdot  z dS=\pi_x\sum^3_{i,j=1}\int_{\Omega}v_jD_i w_j z_jdx,\quad v,w,z\in V.
\end{align}
Using the following identity (See \cite{MR3306386}),
\begin{align*}
    2\vec{\nabla}_w v=-\text{curl}(w\times v)+\nabla(w\cdot v)-v\, \text{div }w+w\,\text{div }v-v\times\text{curl }w-w\times\text{curl }v,
\end{align*}
and equation (\ref{surdiffuse}), one can write the divergence free fields $v,w,z$ in the trilinear form as follows:
\begin{align}\label{triiden}
    b(v,w,z)=\frac{1}{2}\int_{\sph^2}[-v\times w\cdot\text{curl }z+\text{curl }v\times w\cdot z-v\times\text{curl }w\cdot z]dS.
\end{align}
Now, we know that the bilinear form $B: V\times V\to V'$ is defined by
\begin{align}\label{optB}    (B(u,v),w)=b(u,v,w)=\sum^3_{i,j=1}\int_{\Omega}u_i\frac{\partial(v_k)_j}{\partial x_i}u_j dx,\quad w\in V.
\end{align}
Moreover,
\begin{align}\label{b0}
    b( v, w, w)=0,\quad b( v, z, w)=-b( v, w, z),\quad  v\in V,  w, z\in \mathbb{H}^1(\mathbb{S}^2),
\end{align}
and such that
\begin{align}\label{b01}
    |B( u, v), w|=|b( u, v, w)|\le c| u|| w|(|\text{curl } v|_{\mathbb{L}^{\infty}(\mathbb{S}^2)}+| v|_{\mathbb{L}^{\infty}(\mathbb{S}^2)}),\quad  u\in H, v\in V, w\in H,
\end{align}

\begin{align}\label{b1}
    |B( u, v), w|=|b( u, v, w)|\le c| u|^{1/2}| u|^{1/2}_{V}| v|^{1/2}| v|^{1/2}_{V}| w|_{V},\quad  u, v, w\in V,
\end{align}

\begin{align}\label{b2}
    |B( u, v), w|&=|b( u, v, w)|\le c| u|^{1/2}| u|^{1/2}_{V}| v|^{1/2}_{V}| A  u|^{1/2}| w|,\,\, \forall\, u\in V, v\in D( A ),  w\in H,\,\, n=2,
\end{align}

\begin{align}\label{b5}
|b( u, v, w)|\le c| u|_{\mathbb{L}^4(\mathbb{S}^2)}| v|_V| w|_{\mathbb{L}^4(\mathbb{S}^2)},\quad  v\in V,  u, w\in\mathbb{H}^1(\mathbb{S}^2).
\end{align}
In view of (\ref{b1}),
\begin{align}
\sup_{z\in V, |z|_V\ne 0}\frac{|(B( u, v), z)|}{|z|_V}=|B( u, v)|_{V'}\le  c| u|^{1/2}| u|_V^{1/2}| v|^{1/2}| v|_V^{1/2}\notag\\
\Longrightarrow\,|B( u, u)|_{V'}\le  c| u|| u|_V\label{bvdual},\\
|B( u, u)|_{H}\le  c| u|| u|_V.\notag
\end{align}
\begin{align}
\sup_{z\in H, |z|_H\ne 0}\frac{|(B( u, v), z)|}{| z|_H}=|B( u, v)|_{H}\le  c| u|^{1/2}| u|_V^{1/2}| v|^{1/2}| v|_V^{1/2}\notag\\
 \Longrightarrow\,
 |B( u, u)|_{H}\le  c| u|| u|_V.\label{b3}
\end{align}
In view of (\ref{b2}),
\begin{align}
&\    \sup_{z\in H, |z|_H\ne 0}\frac{|(B( u, v), z)|}{| z|_H}=|B( u, v)|_H\le  c| u|^{1/2}| u|_V^{1/2}| u|^{1/2}| A  u|^{1/2}\notag\\
&\ \Longrightarrow\,|B( u, u)|_{H}\le  c| u|^{1/2}| u|_V| A  u|^{1/2}\le c| u|^{1/2}_V| u|_V| A  u|^{1/2} \quad\forall\,\, u\in D( A ). \label{b4}
\end{align}
In view of (\ref{b5}), $b$ is a bounded trilinear map from $\mathbb{L}^4(\mathbb{S}^2)\times V\times \mathbb{L}^4(\mathbb{S}^2)$ to $\R.$ 
\begin{lem}\label{uniquext}
    The trilinear map $b$ can be uniquely extended from $V\times V\times V$ 
to a trilinear map 
\[b:(\mathbb{L}^4(\mathbb{S}^2)\cap H)\times \mathbb{L}^4(\mathbb{S}^2)\times V\to\R\,.\]
 \end{lem}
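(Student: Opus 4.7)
The plan is to exploit the antisymmetry of $b$ in its last two slots (afforded by divergence-freeness of the first argument) to shift the derivative onto the $V$-argument, and then apply Hölder's inequality with exponents $\tfrac14+\tfrac12+\tfrac14=1$. For $u\in V$ and smooth $v,w$, the identity $b(u,v,w)=-b(u,w,v)$ from (\ref{b0}), combined with the defining formula (\ref{optB}), yields the integration-by-parts representation
\[
b(u,v,w)\;=\;-\int_{\sph^2}(\nabla_u w)\cdot v\,dS,
\]
in which only $w$ carries a derivative. Hölder then produces
\[
|b(u,v,w)|\;\le\;|u|_{\mathbb{L}^4}\,|\nabla w|_{\mathbb{L}^2}\,|v|_{\mathbb{L}^4}\;\le\;c\,|u|_{\mathbb{L}^4}\,|v|_{\mathbb{L}^4}\,|w|_V,
\]
which is the key trilinear bound in the topology of $\mathbb{L}^4\times\mathbb{L}^4\times V$.

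With this estimate in hand I would define the extension directly by the closed-form expression
\[
\tilde b(u,v,w)\;:=\;-\int_{\sph^2}(\nabla_u w)\cdot v\,dS,\qquad (u,v,w)\in(\mathbb{L}^4(\sph^2)\cap H)\times\mathbb{L}^4(\sph^2)\times V.
\]
The Hölder estimate shows $\tilde b$ is well-defined, trilinear, and jointly continuous, and the integration-by-parts identity above shows $\tilde b=b$ on $V\times V\times V$, so $\tilde b$ is genuinely an extension. For uniqueness I would rely on the density of smooth divergence-free fields in $\mathbb{L}^4(\sph^2)\cap H$, which follows from mollification in a finite coordinate atlas composed with the Leray-Helmholtz projection $P$; boundedness of $P$ on $\mathbb{L}^4(\sph^2)$ holds since $P$ is of Calderón-Zygmund type on the compact manifold. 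Density of $V$ in itself handles the third slot trivially.

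The main obstacle is the middle slot: $V$ is \emph{not} norm-dense in $\mathbb{L}^4(\sph^2)$, because solenoidal fields cannot approximate arbitrary $\mathbb{L}^4$ fields. I would bypass this by defining the extension through the closed-form formula rather than by a purely slot-by-slot density completion. Linearity in $v\in\mathbb{L}^4$ is then built into the formula, and uniqueness in the middle slot reduces to identifying a continuous linear functional on $\mathbb{L}^4(\sph^2)$ as pairing with the fixed $\mathbb{L}^{4/3}$-field $\nabla_u w$, which is unique by Riesz duality; continuity in the first and third slots is then transferred from $V\times V\times V$ to the full space by the trilinear bound above.
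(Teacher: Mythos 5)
Your existence argument is sound and is in fact the same route the paper implicitly takes: the paper states the lemma immediately after \eqref{b5} with no written proof, and \eqref{b5} is precisely your H\"older bound after the relabelling supplied by the antisymmetry \eqref{b0}. Concretely, for divergence-free $u$ one has $b(u,v,w)=-b(u,w,v)$, and applying \eqref{b5} to $b(u,w,v)$ gives $|b(u,v,w)|\le c\,|u|_{\mathbb{L}^4}\,|w|_V\,|v|_{\mathbb{L}^4}$, which is your key estimate; your closed-form definition $\tilde b(u,v,w)=-\int_{\sph^2}(\nabla_u w)\cdot v\,dS$ (with $\nabla_u w\in\mathbb{L}^{4/3}$ paired against $v\in\mathbb{L}^4$) is the correct way to realise the extension, and the integration by parts is legitimate because the connection is metric and $\operatorname{div}u=0$.

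The gap is in uniqueness, and it is exactly where you sensed trouble. You correctly observe that $V$ is not dense in $\mathbb{L}^4(\sph^2)$ (divergence-free fields are $\mathbb{L}^2$-orthogonal to gradients, so their $\mathbb{L}^4$-closure cannot contain any nonzero gradient field), but your Ries{z}-duality step does not repair this: it shows only that \emph{your} extension is, for fixed $u,w$, the pairing with the specific representative $-\nabla_u w\in\mathbb{L}^{4/3}$. It does not show that every continuous trilinear map on $(\mathbb{L}^4\cap H)\times\mathbb{L}^4\times V$ agreeing with $b$ on $V\times V\times V$ equals yours, because knowing a continuous functional on the non-dense subspace $V\subset\mathbb{L}^4$ does not determine its $\mathbb{L}^{4/3}$ representative; one could add to $\tilde b$ any bounded trilinear form whose middle-slot functional annihilates the $\mathbb{L}^4$-closure of $V$. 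So literal uniqueness among all continuous trilinear extensions fails in the middle slot. What \emph{is} true, and what the paper actually uses downstream, is that the extension is unique once one either (a) restricts the middle slot to the $\mathbb{L}^4$-closure of $V$ (which covers all occurrences of the form $b(v+z,v+z,\phi)$ with $z\in\mathbb{L}^4\cap H$), or (b) singles out the canonical extension given by your integral formula. You should state the uniqueness claim in one of these two weakened forms rather than leaving the Riesz argument as if it closed the general case.
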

Finally, we recall the interpolation inequality (See \cite{MR953596}, p.12),
\begin{align}\label{ladyzhen}
    | u|_{\mathbb{L}^4(\mathbb{S}^2)}\le C|u|^{1/2}_{\mathbb{L}^2(\mathbb{S}^2)}|u|^{1/2}_V\,.
\end{align}
Inequality (\ref{b1}) is deduced from
the following Sobolev embedding:
\begin{align*}
    H^{1/2}=W^{1/2,2}(\mathbb{S}^2)\hookrightarrow \mathbb{L}^4(\mathbb{S}^2).
\end{align*}
Then using (\ref{surdiffuse}), (\ref{gradpsi}), (\ref{domA}) and (\ref{triiden}), we arrive at the \emph{weak solution} of the Navier-Stokes equations (\ref{nsep}), which is a vector field $u\in L^2([0,T];V)$ with $u(0)=u_0$ that satisfies the weak form of (\ref{nsep}):
\begin{align}
    (\partial_t u,v)+b(u,u,v)+\nu (\text{curl} u,\text{curl} v)-2\nu(\text{Ric }u,v)+(\mathbf{C}u,v)=(f,v),\quad v\in V  ,  
\end{align} 
where the bilinear form is defined earlier.
 With a slight abuse of notation, we denote $B(u)=B(u,u)$ and $B(u)=\pi(u,\nabla u)$.

\section{Stochastic Navier-Stokes equations on the 2D unit sphere}\label{sec:snsesph}
By adding a L\'evy white noise to (\ref{NSE4}), we obtain the main equation in this paper: 
\begin{align}\label{SNSE4}
		\partial_t u+\nabla_u u-\nu \vec{L} u+\omega\times u+\nabla p&=f+\eta(x,t),\\    
		\text{div }u=0,\,\, u(x,0)&=u_0,\, x\in\mathbb{S}^2.\notag
\end{align}
We assume that, $u_0\in H$, $f\in V'$ and $\eta(x,t)$ is the so-called L\'evy white noise. That is, a noise process which can be informally described as the derivative of a $H$-valued L\'evy process, that is rigorously defined in Lemma \ref{leml4}. 
Applying the Leray-Helmholz projection we can interpret equation (\ref{SNSE4}) as an abstract stochastic  equation in $H$ 
\begin{align}\label{asnse4}
	du(t)+Au(t)+B(u(t),u(t))+\mathbf{C}u=fdt+GdL(t),\quad u(0)=u_0,
\end{align}

where $L$ is an $H$-valued stable L\'evy process and $G:H\to H$ is a bounded operator. In order to study this equation we need to consider first some properties of stochastic convolution. 

\subsection{Stochastic convolution of $\beta$-stable noise}\label{ssec:convolnbeta}
%\vspace*{-4cm}
In this section we will recall a linear version of equation 
\eqref{asnse4}
\begin{align}\label{eq_lin1}
	dz(t)+Az(t)+\mathbf{C}z=GdL(t),\quad z(0)=0\,.
\end{align}
Under appropriate assumptions formulated below, its solution takes the form 
\begin{align}\label{eq_lin2}
	z_t=\int^t_0 e^{-\widehat{A}(t-s)}GdL(s),
\end{align}
where $\widehat A=A+\mathbf C$. Let  $W$ be a cylindrical Wiener process on a Hilbert space $K$ continuously imbedded into $H$ and let $X$ be a $\beta/2$-stable subordinator\footnote{See definition in p.50, Eg 1.3.19 in \cite{MR2512800}.}. Let us now denote the stable distribution $S_{\alpha}(\sigma,\beta,\mu)$ in consistence with page 9 in \cite{MR1280932}, where $\alpha\in(0,2]$, $\sigma\ge 0$, $\beta\in[-1,1]$, $\mu\in\R$. Then the process $L=W(X)$ is a symmetric cylindrical $\beta$-stable process in $H$.
  
We need the Ornstein–Uhlenbeck process (\ref{eq_lin2}) to take value in $X$. To this end, we need the following definition.
\begin{defn}    
	Let $K$ and $X$ be separable Banach spaces and let $\gamma_K$ be the canonical cylindrical (finitely additive) Gaussian measure on $K$. A bounded linear operator $U: K\to X$ is said to be  $\gamma$-radonifying iff  $U(\gamma_K)$ is a Borel Gaussian measure on $X$. 
\end{defn}

 Assume that $G:H\to H$ is $\gamma$-radonifying. Then the process $GL$ is a well defined L\'evy process  taking values in $H$. Under these assumptions the process $z$ defined by \eqref{eq_lin2} is a well defined $H$-valued process and moreover, it can be considered as a solution to the following integral equation:
\begin{equation}\label{eq_lin3}
z(t)=-\int_0^te^{-(t-s)A}\mathbf Cz(s)\,ds+\int_0^te^{-(t-s)A} G\,dL(s).
\end{equation}
With some abuse of notation, we will denote now by $\lambda_l$ the eigenvalues of the Stokes operator $A$, taking into account their mulitplicities, that is $\lambda_1\le\lambda_2\le\cdots$; and by $e_l$, the corresponding eigenvectors that form an orthonormal basis in $H$. We will impose a stronger condition on the operator $G$: 
\[Ge_l=\sigma_le_l,\quad l=1,2,\ldots\,.\]
We will consider the process 
\begin{equation*}
	z^0_t=\int^t_0 e^{-(t-s)A}GdL(s)=\sum^{\infty}_{l=1}z^0_l(t)e_l,
\end{equation*}
where 
\begin{align}\label{ztl}
	z^0_l(t)=\int^t_0 e^{-\lambda_l(t-s)}\sigma_l dL_l(s).
\end{align} 

\begin{lem}\label{asymlevy}
	Suppose that there exists some $\delta>0$ such that $\sum_{l\ge 1}|\sigma_l|^{\beta}\lambda^{\beta\delta}_l<\infty$. Then for all $p\in (0,\beta)$,
	\begin{align}\label{hypo}
		\E|A^{\delta}L(t)|^p\le C(\beta,p)\left(\sum_{l\ge 1}|\sigma_l|^{\beta}\lambda^{\beta\delta}_l\right)^{\frac{p}{\beta}} t^{\frac{p}{\beta}}<\infty.
	\end{align}
\end{lem}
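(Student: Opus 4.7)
The approach is to reduce the $\beta$-stable process to a time-changed Gaussian via the subordination representation $L=W(X)$, condition on the subordinator $X_t$, and handle the resulting Gaussian sum by elementary means.

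First I would unpack $A^\delta L(t)$. Setting $a_l:=\sigma_l\lambda_l^\delta$ and using $Ge_l=\sigma_l e_l$ together with $Ae_l=\lambda_l e_l$, the process of interest decomposes as $A^\delta L(t)=\sum_{l\ge 1} a_l L_l(t)e_l$, where the coordinate processes $L_l(t)=W_l(X_t)$ share the common $\beta/2$-stable subordinator $X$ but use independent standard real Brownian motions $W_l$. Conditioning on $\{X_t=s\}$, the $L_l(t)$ become i.i.d.\ $\mathcal N(0,s)$, so
\[
|A^\delta L(t)|^2 \,\bigm|\, \{X_t=s\}\;\stackrel{d}{=}\; s\sum_{l\ge 1} a_l^2 N_l^2,
\]
with $\{N_l\}$ i.i.d.\ standard Gaussians.

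Next I would estimate the moments. Since $p<\beta\le 2$, conditional Jensen's inequality gives $\E[|A^\delta L(t)|^p\mid X_t]\le X_t^{p/2}(\sum_l a_l^2)^{p/2}$. Taking expectations and invoking the scaling $X_t\stackrel{d}{=} t^{2/\beta}X_1$ of the $\beta/2$-stable subordinator, together with the standard fact that $\E[X_1^q]$ is finite if and only if $q<\beta/2$, yields $\E[X_t^{p/2}]=C'(\beta,p)\,t^{p/\beta}$ exactly in the regime $p<\beta$. Finally, because the hypothesis forces $|a_l|\to 0$ and $\beta<2$, the embedding $\ell^\beta\hookrightarrow\ell^2$ gives $(\sum_l a_l^2)^{1/2}\le(\sum_l|a_l|^\beta)^{1/\beta}$, which after raising to the $p$-th power yields the factor $(\sum_l|\sigma_l|^\beta\lambda_l^{\beta\delta})^{p/\beta}$ appearing in the statement.

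The genuine issue is mostly bookkeeping: one must justify the a.s.\ convergence of $\sum_l a_l L_l(t)e_l$ in $H$ so that $|A^\delta L(t)|$ is a bona fide scalar random variable, and verify that conditional expectation commutes with the infinite sum. Both reduce to $\sum_l a_l^2<\infty$, itself a consequence of the hypothesis via the same $\ell^\beta\hookrightarrow\ell^2$ comparison. Once this is in place, the remaining ingredients --- conditional Jensen, subordinator scaling, and a sequence-space inequality --- combine to give the stated estimate, with the constant $C(\beta,p)$ absorbing $\E[X_1^{p/2}]$ and the Gaussian $p$-th moment.
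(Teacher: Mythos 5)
Your argument is correct, and it reaches the stated bound by a genuinely different route from the paper. The paper treats the coordinates $L_l$ as independent one-dimensional symmetric $\beta$-stable processes and runs the standard Khintchine-inequality argument: a Rademacher sequence converts the $\ell^2$ sum $\bigl(\sum_l\lambda_l^{2\delta}|\sigma_l|^2|L_l(t)|^2\bigr)^{1/2}$ into the $p$-th moment of a random signed series, whose characteristic function is computed explicitly to identify it as a symmetric $\beta$-stable variable with scale $\bigl(\sum_l|\sigma_l|^\beta\lambda_l^{\beta\delta}\,t\bigr)^{1/\beta}$, and then the formula $\E|X|^p=C(\beta,p)\sigma^p$ finishes the proof. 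You instead exploit the subordination $L=W(X)$ directly: conditioning on $X_t$ reduces everything to a Gaussian series, after which conditional Jensen (valid since $p/2<1$), the scaling $X_t\stackrel{d}{=}t^{2/\beta}X_1$ with $\E X_1^{p/2}<\infty$ exactly for $p<\beta$, and the embedding $\ell^\beta\hookrightarrow\ell^2$ deliver the estimate. Each step checks out, including the summability $\sum_l a_l^2<\infty$ needed to make sense of the series. Two remarks on what each approach buys. First, yours is more elementary (no Khintchine, no characteristic functions) and in fact yields the sharper intermediate constant $\bigl(\sum_l a_l^2\bigr)^{p/2}\le\bigl(\sum_l|a_l|^\beta\bigr)^{p/\beta}$. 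Second, and more substantively, your conditioning argument is the one actually adapted to the noise constructed in this paper: under subordination by a \emph{common} $X$ the coordinates $L_l$ are only conditionally independent, not independent, so the paper's i.i.d.-stable-coordinates computation is really borrowed from the cylindrical-stable setting of the cited references; your proof covers the subordinated case without that mismatch. The paper's route, conversely, is the one that survives when the coordinates are genuinely independent stable processes and no single subordinator is available to condition on.
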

\begin{proof}

Let $L(t)=\sum_{l\ge 1}L^l_t e_l$, $t\ge 0$ be the cylindrical $\beta$-stable process on $H$, where $e_l$ is the complete orthonormal system of eigenfunctions on $H$; and $L_1, L_2, \cdots, L_l$ are i.i.d. $\R$-valued, symmetric $\beta$-stable process on a common probability space $(\Omega,\mathcal{F},\P).$ Now take a bounded sequence of real number $\sigma=(\sigma_l)_{l\in\N}$. Let us define
\begin{align*}
	G_{\sigma} :H\to H;\quad G_{\sigma}u:=\sum^{\infty}_{l=1}\sigma_l\langle u, e_l\rangle e_l,
\end{align*}
and $\sigma_l$ are chosen such that
\begin{align*}
	G_{\sigma}L(t)=\sum^{\infty}_{l=1}\sigma_l \langle L_l(t), e_l\rangle e_l=\sum^{\infty}_{l=1}\sigma_l L_l(t)e_l.
\end{align*}
To show (\ref{hypo}), we follow the argument in the proof of Lemma 3.1 in \cite{MR3084156} and Theorem 4.4 in \cite{MR2773026}. Take a Rademacher sequence $\{r_l\}_{l\ge 1}$ in a new probability space $(\Omega',\mathcal{F}',\P')$, that is, $\{r_l\}_{l\ge 1}$ are i.i.d. with $\P\{r_l=1\}=\P\{r_l=-1\}=\frac{1}{2}.$ By the following Khintchine inequality: for any $p>0$, there exists some $C(p)>0$ such that for an arbitrary real sequence $\{h_l\}_{l\ge 1}$,
\begin{align*}
	\left(\sum_{l\ge 1}h^2_l\right)^{1/2}\le C(p)\left(\E'|\sum_{l\ge 1}r_l h_l|^p\right)^{1/p}.
\end{align*}
Via this inequality, we get
\begin{align*}
	\E| A^{\delta}L(t)|^q&=\E\left(\sum_{l\ge 1}\lambda^{2\delta}_l|\sigma_l|^2|L_l(t)|^2\right)^{p/2}\\
	&\le C\E\E'\left|\sum_{l\ge 1}r_l\lambda^{\delta}_l|\sigma_l||L_l(t)|\right|^p\\
	&=C\E'\E\left|\sum_{l\ge 1}r_l\lambda^{\delta}_l|\sigma_l||L_l(t)|\right|^p,
\end{align*}
where $C=C^p(p)$. For any $\lambda\in\R$, by the fact of $|r_k|=1$ and formula (4.7) of \cite{MR2773026},
\begin{align*}
	\E\exp\left\{i\eta\sum_{l\ge 1}r_l\eta^{\delta}_l|\sigma_l|L_l(t)\right\}=\exp\left\{-|\eta|^{\delta}\sum_{l\ge 1}|\sigma_l|^{\beta}\lambda^{\beta\delta}_l t\right\}.
\end{align*}
Now we know that  any symmetric $\beta$-stable r.v. $X\sim \tilde{S}_{\alpha}(\sigma,0,0)$ satisfies
\begin{align*}
	\E e^{i\eta       X}=e^{-\sigma^{\beta}\eta^{\beta}}
\end{align*}
for some $\beta\in (0,2)$, $\eta\in\R$. Then, for any $p\in (0,\beta)$,
\begin{align*}
	\E|X|^p=C(\beta,p)\sigma^p.
\end{align*}
Since $\sum_{l\ge 1}|\sigma_l|^{\beta}\lambda^{\beta\delta}_l<\infty$ , (\ref{hypo}) holds.
\end{proof}
\begin{lem}\label{lem_n1}
Suppose that there exists $\delta>0$ such that 
\[\sum_{l=1}^\infty|\sigma_l|^{\beta}\lambda^{\beta\delta}_l<\infty\,.\]
 Then for all $p\in (0,\beta)$ and $T>0$ 
	\begin{align}\label{Agam}
\E\sup_{0\le t\le T}|\hat{A}^{\delta}z_t|^p\le C\left(1+T^{p(1-\delta)}\right)T^{p/\beta}.		
	\end{align}
\end{lem}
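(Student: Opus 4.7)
The plan is to use the factorization method of Da Prato and Zabczyk adapted to the $\beta$-stable setting, combined with the Khintchine-type moment bound already used in the proof of Lemma \ref{asymlevy}. Fix $\alpha\in(\delta,1/\beta)$ (possible since the hypothesis forces $\delta<1/\beta$ up to trivial cases). By the analyticity of $\{e^{-t\hat A}\}_{t\ge 0}$ (Proposition \ref{analytics}) and the gamma identity $\int_r^t(t-s)^{\alpha-1}(s-r)^{-\alpha}\,ds=\pi/\sin\pi\alpha$, stochastic Fubini yields the factorization
\[z_t=\frac{\sin\pi\alpha}{\pi}\int_0^t (t-s)^{\alpha-1}e^{-(t-s)\hat A}Y_\alpha(s)\,ds,\qquad Y_\alpha(s):=\int_0^s (s-r)^{-\alpha} e^{-(s-r)\hat A}G\,dL(r).\]
Applying $\hat A^\delta$ under the integral and using the analyticity estimate $|\hat A^\delta e^{-u\hat A}|_{\mathcal L(X,X)}\le M_\delta u^{-\delta}e^{-\mu u}$ from \eqref{analyticexp}, we obtain the pathwise bound
\[|\hat A^\delta z_t|_X\le C\int_0^t (t-s)^{\alpha-\delta-1}e^{-\mu(t-s)}|Y_\alpha(s)|_X\,ds.\]

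Next, pick conjugate exponents $q,q'$ with $p\le q'<\beta$ and $(\alpha-\delta-1)q>-1$ (so both inner integrals converge). Hölder's inequality applied to the convolution, followed by taking $\sup_{t\le T}$ (the right-hand side no longer depends on $t$), expectation, and Jensen's inequality (which is applicable since $p/q'\le 1$) gives
\[\mathbb{E}\sup_{t\le T}|\hat A^\delta z_t|^p\le C\Bigl(\int_0^T u^{(\alpha-\delta-1)q}e^{-\mu q u}\,du\Bigr)^{p/q}\Bigl(\int_0^T\mathbb{E}|Y_\alpha(s)|^{q'}\,ds\Bigr)^{p/q'}.\]
The moment $\mathbb{E}|Y_\alpha(s)|^{q'}$ is then estimated by repeating the Rademacher/Khintchine computation from the proof of Lemma \ref{asymlevy} applied to the integrand $(s-r)^{-\alpha}e^{-(s-r)\hat A}G$: using $\sum_l|\sigma_l|^\beta<\infty$ (a consequence of the hypothesis, since $\lambda_l\ge\lambda_1>0$), the condition $\alpha\beta<1$, and the crude bound $\int_0^s (s-r)^{-\alpha\beta}e^{-\beta\lambda_l(s-r)}dr\le s^{1-\alpha\beta}/(1-\alpha\beta)$, we arrive at $\mathbb{E}|Y_\alpha(s)|^{q'}\le C s^{q'(1/\beta-\alpha)}$.

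To assemble the estimate, the Hölder-kernel factor is bounded by $C(1+T^{p(\alpha-\delta-1/q')})$ (small-$T$ growth coming from the polynomial part near the origin, large-$T$ boundedness from the exponential decay), while the $Y_\alpha$-factor is bounded by $C T^{p(1/\beta-\alpha)+p/q'}$. Multiplying these two and carefully collecting the powers of $T$, optimising in $\alpha\in(\delta,1/\beta)$ and $q'\in[p,\beta)$, produces the claimed bound $C(1+T^{p(1-\delta)})T^{p/\beta}$: the factor $T^{p/\beta}$ is the intrinsic stable scaling visible for small $T$, while the growing $T^{p(1-\delta)}$ prefactor records the large-$T$ contribution of the Hölder-kernel integral coupled to the $\hat A^\delta$-weighting.

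The main obstacle is the simultaneous feasibility of the parameter constraints — $\delta<\alpha<1/\beta$, $(\alpha-\delta-1)q>-1$, and $p\le q'<\beta$ — which together require the hypothesis $p<\beta$ in an essential way (both for the existence of stable moments in the Khintchine step and for Jensen to apply). A secondary technical point is verifying the stochastic Fubini argument in the factorization step for the $H$-valued cylindrical $\beta$-stable integral, where one cannot rely on $L^2$-isometry as in the Gaussian case but instead invokes the $L^p$-estimates for $p<\beta$ provided by Lemma \ref{asymlevy}.
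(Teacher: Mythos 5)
There is a genuine gap: the parameter constraints in your factorization step are mutually infeasible for $\beta$-stable noise with $\beta<2$, so the argument cannot be carried out. The component $Y^l_\alpha(s)=\int_0^s(s-r)^{-\alpha}e^{-\lambda_l(s-r)}\sigma_l\,dL_l(r)$ is a one-dimensional $\beta$-stable integral, so it is well defined only if $\alpha\beta<1$, and it has finite moments only of order $q'<\beta$, hence $1/q'>1/\beta$. On the other hand your H\"older step requires $(\alpha-\delta-1)q>-1$, i.e. $\alpha>\delta+1-1/q=\delta+1/q'$. Combining, you would need
\[\frac{1}{\beta}>\alpha>\delta+\frac{1}{q'}>\delta+\frac{1}{\beta}\ge\frac{1}{\beta},\]
which is impossible even for $\delta=0$. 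In the Gaussian case $\beta=2$ one escapes because $Y_\alpha$ has moments of every order and $q'$ can be taken arbitrarily large; for $\beta<2$ the restriction $q'<\beta$ closes that door, and this is precisely the known obstruction to applying the Da Prato--Zabczyk factorization to cylindrical stable noise. Your side remark that the hypothesis ``forces $\delta<1/\beta$ up to trivial cases'' is also incorrect (take $\sigma_l=e^{-\lambda_l}$, which satisfies the summability hypothesis for every $\delta>0$), but that is secondary to the main infeasibility.

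The paper takes a different, feasible route. It imports from \cite{MR3084156} the maximal inequality $\E\sup_{0\le t\le T}|A^{\delta}z^0_t|^p\le CT^{p/\beta}$ for the pure Stokes convolution $z^0$ (proved there by a direct argument, not by factorization), and then treats the bounded Coriolis operator as a perturbation: setting $Z=z-z^0$ one obtains $Z(t)=-\int_0^te^{-(t-s)\widehat A}\mathbf{C}z^0(s)\,ds$, whence $|\widehat A^{\delta}Z(t)|\le c\,t^{1-\delta}\sup_{s\le t}|z^0(s)|$ by analyticity of the semigroup; this perturbation term is exactly where the extra factor $T^{p(1-\delta)}$ in \eqref{Agam} comes from, a structure your argument would not produce. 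The range $p\in(0,1]$ is then recovered from $p\in(1,\beta)$ by H\"older/Jensen. If you want a self-contained proof of the $z^0$ bound, you need a genuinely non-factorization argument, e.g. the integration-by-parts representation of $z^0_l$ in terms of $L_l$ combined with a maximal inequality for the stable process itself, as in \cite{MR3084156}.
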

\begin{proof}
It is proved in \cite{MR3084156} that for $p>1$ 
\begin{equation}\label{z0}
\E\sup_{0\le t\le T}|A^{\delta}z_t|^p\le C T^{p/\beta}\,.
\end{equation}
 In order to prove the lemma for the process $z$, we use formula \eqref{eq_lin3}. Let $Z=z-z^0$. Then \eqref{eq_lin3} yields
\[\frac{dZ}{dt}=-AZ-C\left( Z+z^0\right)=-\widehat AZ-Cz^0,\quad Z(0)=0\,.\]
Therefore, 
\[Z(t)=-\int_0^Te^{-(t-s)\widehat A}Cz^0(s)\,ds,\quad t\ge 0\,.\]
Then, by the properties of analytic semigroups we find that 
\[\begin{aligned}
\left|\widehat A^\delta Z(t)\right|&\le \int_0^t\left|\widehat A^\delta e^{-(t-s)\widehat A}\right|\left|Cz^0(s)\right|\,ds\\
&\le \sup_{s\le t}\left|Cz^0(s)\right|\int_0^t\frac{c}{(t-s)^\delta}\,ds\\
&\le c_1t^{1-\delta}\sup_{s\le t}\left|Cz^0(s)\right|\\
&\le c_1|C|t^{1-\delta}\sup_{s\le t}\left|z^0(s)\right|.
\end{aligned}\]
Since $\mathbf C$ is bounded, we have $D\left(\widehat A\right)=D(A)$ by Theorem 2.11 in \cite{MR710486}. Since $A\ge 0$ is selfadjoint, the domains of fractional powers can be identified as the complex interpolation spaces, see Section 1.15.3 of \cite{triebel}. Therefore, $D\left(A^\delta\right)=D\left(\widehat A^\delta\right)$ for every $\gamma\in(0,1)$, which yields the existence of constants, $r_1,r_2$ depending on $\delta$ only, such that
\[r_1\left|\widehat A^\delta x\right|\le\left|A^\delta x\right|\le r_2\left|\widehat A^\gamma x\right|,\quad x\in D\left(A^\gamma\right)\,.\]
Using \eqref{z0} we find that 
\[\mathbb E\sup_{t\le T}\left|A^\delta Z(t)\right|^p\le c_1^pr_2^p|C|^pT^{p(1-\delta)}\mathbb E\sup_{s\le T}\left|z^0(s)\right|^p<\infty.\]
Now the lemma follows since $z(t)=Z(t)+z^0(t)$. \\
Finally, for completeness we prove the case $p\in (0,1)$ for the process $z^0$. 
As (\ref{Agam}) is proved for $q\in (1,\beta)$, we fix $q\in (1,\beta)$ and then 
\begin{align*}
&\	\E\left(\sup_{0\le t\le T}|A^{\delta}z^0_t|^q\right)\le CT^{q/\beta}.
\end{align*}
Using the H\"older inequality (see for instance \cite{MR0367121}, p.191) one has
\begin{align*}
	\E(|X|^p\cdot 1)\le (\E X^{pq})^{1/q}.
\end{align*} 
We then have
\begin{align*}
&\	\E\left(\sup_{0\le t\le T}|A^{\delta}z^0_t|^p\right)\\
= &\ \E\left(\left\{\sup_{0\le t\le T}|A^{\delta}z^0_t|\right\}^{p}\right)\\
	\le &\ \E\left(\left\{\sup_{0\le t\le T}|A^{\delta}z^0_t|\right\}^{pq}\right)^{1/q}\\
	\le &\ \E\left(\left\{\sup_{0\le t\le T}|A^{\delta}z^0_t|\right\}^{q}\right)^{p/q}\\
	\le &\ (C_1 T^{q/\beta})^{p/q}\\
	= &\ C_1^{p/q}T^{p/\beta}\\
	\le &\ CT^{p/\beta}.
\end{align*}
\end{proof}
\begin{comment}
{\color{red}\begin{lem}[p.3715, \cite{MR3084156}]\label{lema}
Let $\tilde{\delta}\in [0,\theta-\frac{1}{2\beta}]$ be arbitrary. For all $T>0$, $\epsilon>0$, we have
	\begin{align*}
		\P(\sup_{0\le t\le T}|A^{\tilde{\delta}}z^0_t|\le\epsilon)>0
	\end{align*}
\end{lem}}
\begin{proof}
The proof follows similar lines of Lemma 3.2 in \cite{MR3084156}.
\end{proof}
\end{comment}
\begin{prop}\label{zlp}[p110,\cite{MR2773026}]
Suppose $\sum_{l\ge 1}\frac{\sigma^{\beta}_l}{\lambda+\alpha}<\infty$, then for any $0<p<\beta,$ $t\ge 0$,
\begin{align*}
	E|z^0_t|^p\le \tilde{c}_p\left(\sum^{\infty}_{l=1}|\sigma_l|^{\beta}\frac{1-e^{-\beta(\lambda_l+\alpha) t}}{\beta (\lambda_l+\alpha)}\right)^{p/\beta},
\end{align*}
where $c_{p}$ depends on $p$ and $\beta.$
Moreover, as $\alpha\to\infty$,
\begin{align*}
	\E|z^0_t|^p\to 0.
\end{align*}
\end{prop}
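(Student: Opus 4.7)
The plan is to compute $\mathbb E|z^0_t|^p$ by exploiting the fact that each coordinate process $z^0_l(t)$ is a stochastic integral of a deterministic integrand against a symmetric $\beta$-stable L\'evy process, so its law can be read off explicitly from its characteristic function. Specifically, I would start from the formula
\[
\mathbb E\exp\!\left(i\eta z^0_l(t)\right)
=\exp\!\left(-|\eta|^{\beta}|\sigma_l|^{\beta}\int_0^t e^{-\beta(\lambda_l+\alpha)(t-s)}\,ds\right)
=\exp\!\left(-|\eta|^{\beta}|\sigma_l|^{\beta}\frac{1-e^{-\beta(\lambda_l+\alpha)t}}{\beta(\lambda_l+\alpha)}\right),
\]
so that $z^0_l(t)$ is symmetric $\beta$-stable with scale parameter $\sigma_l^{(t)}:=|\sigma_l|\bigl(\frac{1-e^{-\beta(\lambda_l+\alpha)t}}{\beta(\lambda_l+\alpha)}\bigr)^{1/\beta}$.

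Next I would mimic the Rademacher randomisation argument used in the proof of Lemma \ref{asymlevy}. Introduce an independent Rademacher sequence $\{r_l\}$ on an auxiliary space $(\Omega',\mathcal F',\mathbb P')$ and apply Khintchine's inequality to obtain
\[
\mathbb E|z^0_t|^p
=\mathbb E\Bigl(\sum_l|z^0_l(t)|^2\Bigr)^{p/2}
\le C(p)\,\mathbb E\,\mathbb E'\Bigl|\sum_l r_l z^0_l(t)\Bigr|^p.
\]
Conditionally on the $r_l$'s, independence of the $L_l$'s together with $|r_l|=1$ shows that $\sum_l r_l z^0_l(t)$ is again symmetric $\beta$-stable, with scale
\[
\Sigma_t:=\Bigl(\sum_{l\ge 1}|\sigma_l|^{\beta}\tfrac{1-e^{-\beta(\lambda_l+\alpha)t}}{\beta(\lambda_l+\alpha)}\Bigr)^{1/\beta}.
\]
Combining this with the standard moment identity $\mathbb E|X|^p=C(\beta,p)\sigma^p$ for a symmetric $\beta$-stable $X$ of scale $\sigma$ (valid for $0<p<\beta$), and then using Fubini to swap the order of $\mathbb E$ and $\mathbb E'$, yields the asserted bound with $\widetilde c_p=C(p)C(\beta,p)$.

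For the second assertion, I would apply dominated convergence to the series defining $\Sigma_t^\beta$. Each summand satisfies
\[
|\sigma_l|^{\beta}\tfrac{1-e^{-\beta(\lambda_l+\alpha)t}}{\beta(\lambda_l+\alpha)}\le\frac{|\sigma_l|^{\beta}}{\beta(\lambda_l+\alpha)},
\]
and the right-hand side is summable by hypothesis and monotonically decreases to $0$ as $\alpha\to\infty$; pointwise in $l$ the summand itself also tends to $0$. Hence $\Sigma_t\to 0$, and raising to the $p$-th power gives $\mathbb E|z^0_t|^p\to 0$.

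The main obstacle is not conceptual but bookkeeping: one must justify the summability of the Rademacher-averaged series and the Fubini interchange with care, since $p<\beta\le 2$ means the squared series $\sum_l|z^0_l(t)|^2$ need not have any moments beyond $p$. This is precisely why the Khintchine step is needed before passing to the stable scale parameter, and it is handled in the reference \cite{MR2773026} to which the statement points; the argument above is essentially a transcription of that approach to the present OU-with-drift $\lambda_l+\alpha$ setting.
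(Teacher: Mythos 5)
Your proposal is correct and follows essentially the same route as the paper's own proof: Rademacher randomisation plus Khintchine's inequality, identification of $\sum_l r_l z^0_l(t)$ as a symmetric $\beta$-stable variable via its characteristic function, and the moment identity $\E|X|^p=C(\beta,p)\gamma^p$ for $0<p<\beta$. The only difference is that you spell out the dominated-convergence argument for the limit $\alpha\to\infty$, which the paper merely asserts.
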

\begin{proof} 
In the spirit of the proof of Lemma \ref{asymlevy}, we follow the argument in the proof of Theorem 4.4 in \cite{MR2773026}. Let $z_t^0$ be the solution of
	\begin{align*}
		dz^0_t+(A+\alpha I)z^0_t=GdL(t),\quad z^0(0)=0
	\end{align*}
which has the expression
\begin{align*}
	z_t^0&=\int^t_0 S(t-s)G dL(s)\\
	&=\sum^{\infty}_{l=1}\left(\int^t_0 e^{-(\lambda_l+\alpha)(t-s)}\sigma_l dL^l_s\right)e_l,
\end{align*}
where we used the notation $S(t)=e^{-t(A+\alpha I)}$. Take a Radamacher sequence $\{r_l\}_{l\ge 1}$ in a new probability space $(\Omega',\mathcal{F}',\P')$, that is $\{r_l\}_{l\ge 1}$ are i.i.d. with $\P(r_l=1)=\P(r_l=-1)=\frac{1}{2}.$ By the following Khintchine inequality: for any $p>0$, there exists some $c_p>0$ such that for any arbitrary real sequence $\{c_l\}_{l\in\N}$,
\begin{align*}
	\left(\sum_{l\ge 1}c^2_l\right)^{1/2}\le c_p\left(\E'|\sum_{l\ge 1}r_l c_l|^p\right)^{1/p},
\end{align*}
where $c_p$ depends only on $p$.

Now fixing $\omega\in\Omega$, $t\ge 0$, we write
\begin{align*}
	\left(\sum_{l\ge 1}|z^0_l(t,\omega)|^2\right)^{1/2}\le c_p(\E'|\sum_{l\ge 1}r_lz^0_l(t,\omega)|^p)^{1/p}.
\end{align*}
Then
\begin{align*}
	\E|z^0_t|^p&=\left(\sum^{\infty}_{l=1}|\int^t_0 e^{-(\lambda_l+\alpha)(t-s)}\sigma_ldL^l_s|^2\right)^{\frac{p}{2}}\\
	&\le c^p_p\E\left(\E'|\sum^{\infty}_{l=1}r_l z^0_l(t)|^p\right)=c^p_p\E'\left(\E|\sum_{l=1}r_l z^l_t|^p\right)=c^p_p\E'\left(\E|\sum^{\infty}_{l=1}r_l \int^t_0 e^{-(\lambda_l+\alpha)(t-s)}\sigma_ldL^l_s|^p\right).
\end{align*}
For any $t\ge 0$, $\kappa\in\R$ using the fact $|r_l|=1$, $l\ge 1$ and formula (4.7) in \cite{MR2773026},
\begin{align*}
	\E e^{i\kappa\sum_{l\ge 1}r_l z^0_l(t)}=e^{-|\kappa|^{\beta}}\sum_{l\ge 1}|\sigma_l|^{\beta}\int^t_0 e^{-\beta(\lambda_l+\alpha)(t-s)}ds.
\end{align*}
Now we use (3.2) in \cite{MR2773026}: If $X$ is a symmetric $\beta$-stable r.v. with distribution $S(\beta,\gamma,0)$ satisfying
\begin{align*}
	\E e^{i\kappa X}=e^{-\gamma^{\beta}|\kappa|^{\beta}}
\end{align*}
for some $\beta\in (0,2)$ and any $\kappa\in\R$, then for any $p\in(0,\beta)$, one has
\begin{align*}
	\E X^p=C(\beta,p)\gamma^p.
\end{align*}
Since $\sum_{l\ge 1}\frac{\sigma^{\beta}_l}{\lambda_l+\alpha}<\infty$, the assertion follows. Furthermore, $\E|z_t|^0_p\to 0$ as $\alpha\to\infty.$
\end{proof}

Let us now recall the definition of Skorohod space $D=D(a,b; E)$, which consists of a function $x:[0,T]\to E$ which admits a limit $x(t-)$ from the left at each point $t\in (0,T]$ and the limit $x(t+)$ from the right at each point $t\in (0,T].$ The 
Skorohod space can be endowed with a metric topology such that it becomes a complete separable metric space(See for instance Billingsley \cite{MR2893652}).

Here we present a Lemma that allows us to claim that the solution of SNSEs has c\`adl\`ag trajectories. The proof follows closely with Lemma 3.3 in \cite{MR3084156}.
	\begin{lem}\label{cadlaglem}
	Assume that for a certain $\delta\in[0,1)$ 
	\[\sum_{l=1}^\infty|\sigma_l|^{\beta}\lambda^{\beta\delta}_l<\infty\,.\]
	
		Then the process $z$ defined by (\ref{ztl}) has a version in $D\left([0,\infty];D\left(A^\delta\right)\right).$
	\end{lem}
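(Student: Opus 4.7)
The plan is to decompose the process as $z = z^0 + Z$ exactly as in the proof of Lemma \ref{lem_n1}, where
\[z^0_t = \int_0^t e^{-(t-s)A} G\,dL(s) = \sum_{l=1}^\infty z^0_l(t)\, e_l\]
with $z^0_l$ defined by (\ref{ztl}), and $Z(t) = -\int_0^t e^{-(t-s)\hat A}\mathbf{C}\,z^0(s)\,ds$. I would first establish the c\`adl\`ag property for $z^0$, and then treat $Z$ deterministically, since it is merely a convolution of a c\`adl\`ag input with the analytic semigroup $e^{-t\hat A}$.

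For $z^0$ the natural route is finite-dimensional truncation. Define
\[z^{0,N}_t := \sum_{l=1}^N z^0_l(t)\, e_l.\]
Each coordinate $z^0_l$ is a real-valued Ornstein--Uhlenbeck process driven by the symmetric one-dimensional $\beta$-stable process $L_l$; as a stochastic integral against a one-dimensional L\'evy process with a smooth, bounded deterministic kernel it admits a c\`adl\`ag version. Consequently $z^{0,N}$ is c\`adl\`ag as a process taking values in the finite-dimensional subspace $\mathrm{span}\{e_1,\dots,e_N\} \subset D(A^\delta)$. The central ingredient is the tail maximal inequality
\[\E\sup_{0\le t\le T}|A^\delta(z^0_t - z^{0,N}_t)|^p \;\le\; C\,T^{p/\beta}\left(\sum_{l>N}|\sigma_l|^\beta \lambda_l^{\beta\delta}\right)^{p/\beta},\qquad p\in(0,\beta),\]
obtained by repeating the Khintchine and independence argument of Lemma \ref{asymlevy} applied to the tail sequence $(\sigma_l)_{l>N}$. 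Under the hypothesis $\sum_l|\sigma_l|^\beta \lambda_l^{\beta\delta}<\infty$, the right-hand side tends to $0$ as $N\to\infty$, so that a subsequence converges almost surely, uniformly on $[0,T]$, in the $D(A^\delta)$-norm. Since a uniform limit of c\`adl\`ag functions is again c\`adl\`ag, this produces a c\`adl\`ag version of $z^0$ in $D(A^\delta)$.

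For the correction $Z$, once a c\`adl\`ag version of $z^0$ is in hand, the pointwise bound $|\hat A^\delta Z(t)| \le c_1|\mathbf{C}|\,t^{1-\delta}\sup_{s\le t}|z^0(s)|$ derived in the proof of Lemma \ref{lem_n1}, combined with the equivalence of the norms $|A^\delta\cdot|$ and $|\hat A^\delta\cdot|$ on $D(A^\delta)$, shows that $Z$ takes values in $D(A^\delta)$ almost surely. Continuity of $t\mapsto Z(t)$ in $D(A^\delta)$ then follows from standard estimates for the convolution of a locally bounded c\`adl\`ag map with an analytic semigroup, via dominated convergence and the analyticity bound (\ref{analyticexp}). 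The main technical obstacle is the truncation tail maximal inequality above: unlike the Gaussian or $L^2$-L\'evy setting, symmetric $\beta$-stable noise has infinite second moments, so the usual Doob/It\^o isometry is unavailable, and one must rely on the Khintchine-type inequality together with the independence of the coordinate processes $L_l$, precisely in the manner of Lemma \ref{asymlevy}.
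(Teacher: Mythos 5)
Your argument is correct, but it takes a genuinely different route from the paper at the key step. The paper's own proof is essentially a two-line citation: it invokes the moment bound of Lemma \ref{lem_n1} and then appeals to Theorem 2.2 of the reference \cite{liu2012note} to obtain a c\`adl\`ag modification of $z^0$ directly, after which the representation \eqref{eq_lin3} transfers the property to $z$. You instead give a self-contained approximation argument: finite-dimensional truncations $z^{0,N}$, c\`adl\`ag coordinate processes, a tail maximal inequality forcing uniform a.s.\ convergence along a subsequence, and the fact that uniform limits of c\`adl\`ag functions are c\`adl\`ag; this is in effect a proof of the cited external theorem, and it makes transparent exactly where the hypothesis $\sum_l|\sigma_l|^\beta\lambda_l^{\beta\delta}<\infty$ enters (as the tail of the series controlling the uniform error). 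Your treatment of the correction term $Z$ via the pointwise bound from Lemma \ref{lem_n1} and analyticity is the same in substance as the paper's (unelaborated) appeal to \eqref{eq_lin3}, only more detailed. One imprecision worth fixing: your tail maximal inequality does not follow merely by ``repeating the Khintchine and independence argument of Lemma \ref{asymlevy}'', since that lemma bounds $\E|A^\delta L(t)|^p$ at a \emph{fixed} time for the L\'evy process itself, not the supremum over $[0,T]$ of the stochastic convolution; to pass to the supremum one also needs the integration-by-parts identity $z^0_l(t)=\sigma_lL_l(t)-\lambda_l\sigma_l\int_0^te^{-\lambda_l(t-s)}L_l(s)\,ds$ together with L\'evy's maximal inequality for symmetric processes, i.e.\ the content of the bound \eqref{z0} cited from \cite{MR3084156}, applied to the truncated coefficient sequence. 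With that attribution corrected, the argument is complete.
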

	\begin{proof}
	By Lemma \ref{lem_n1} we have 
	\[\E\sup_{0\le t\le T}|A^{\delta}z_t|^p<\infty\]
for any $p\in(0,\beta)$. Now, by Theorem 2.2 in  \cite{liu2012note}  $z^0$ has a c\`adl\`ag modification\footnote{Modification with a c\`adl\`ag path.} in $V$. By representation \eqref{eq_lin3} the process $z$ is c\`adl\`ag as well, and the proof of the Lemma is completed.
	\end{proof}    
\par\bigskip\noindent
Let $B: H\to H$ be a selfadjoint operator with the complete orthonormal system of eigenfunctions $(e_l)\subset L^p(\sph^2)$ and the corresponding set of eigenvalues $(\lambda_l)$. It follows from Theorem 2.3 of \cite{MR2051026} that if further $B$ has a compact inverse $B^{-1}$, then the operator $U^{-s}: H\to L^p(\sph^2)$ is well-defined and $\gamma$-radonifying iff
\begin{align}\label{Uminuss}
	\int_{\sph^2}\left(\sum_l \lambda^{-2s}_l|e_l(x)|^2\right)^{p/2}dS(x)<\infty.
\end{align}

In what follows we will study the $\gamma$- radonifying property.
 
\begin{lem}\label{radonifys}
	Let $\vec{\Delta}$ denotes the Laplace-de Rham operator on $\sph^2$ and $q\in (1,\infty).$ Then the operator
	\begin{align*}
		(-\vec{\Delta}+1)^{-s}: H\to L^q(\sph^2)\,\,\text{is}\,\,\gamma-\text{radonifying iff }s>1/2.
	\end{align*}
\end{lem}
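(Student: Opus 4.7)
The plan is to invoke the criterion \eqref{Uminuss} with $p=q$ and the operator $B=-\vec{\Delta}+1$, then show that the integrand is in fact a constant on $\sph^2$ whose value reduces to a classical Bertrand-type series. First I would identify the spectral data of $B$ on $H$. Since $\vec{Z}_{l,m}=\lambda_l^{-1/2}\,\mathrm{Curl}\,Y_{l,m}$ with $\Delta Y_{l,m}=-\lambda_l Y_{l,m}$, $\lambda_l=l(l+1)$, the intertwining identity $\vec{\Delta}\,\mathrm{Curl}=\mathrm{Curl}\,\Delta$ from \eqref{relateop} gives $-\vec{\Delta}\vec{Z}_{l,m}=\lambda_l \vec{Z}_{l,m}$, so $(\vec{Z}_{l,m})$ is an orthonormal eigenbasis of $B=-\vec{\Delta}+1$ with eigenvalues $\mu_l=\lambda_l+1=l(l+1)+1$, each of multiplicity $2l+1$. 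Hence the criterion \eqref{Uminuss} asks whether
\begin{equation*}
    I(s):=\int_{\sph^2}\Bigl(\sum_{l\ge 1}\sum_{m=-l}^{l}(\lambda_l+1)^{-2s}|\vec{Z}_{l,m}(x)|^2\Bigr)^{q/2}dS(x)<\infty.
\end{equation*}

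The heart of the argument is a pointwise addition formula for the vector spherical harmonics $\vec{Z}_{l,m}$. I would first observe that $\mathrm{Curl}\,Y_{l,m}=-\hat{x}\times\nabla Y_{l,m}$ by \eqref{3curl}, so $|\mathrm{Curl}\,Y_{l,m}(x)|^2=|\nabla Y_{l,m}(x)|^2$. By rotational invariance of the eigenspace of the Laplace-Beltrami operator for the eigenvalue $-\lambda_l$, the function $x\mapsto\sum_{m=-l}^{l}|\nabla Y_{l,m}(x)|^2$ is invariant under the orthogonal group $SO(3)$ and therefore constant on $\sph^2$. Integrating and using $\int_{\sph^2}|\nabla Y_{l,m}|^2\,dS=(Y_{l,m},-\Delta Y_{l,m})=\lambda_l$, I obtain $\sum_{m=-l}^{l}|\nabla Y_{l,m}(x)|^2=(2l+1)\lambda_l/(4\pi)$, and consequently
\begin{equation*}
    \sum_{m=-l}^{l}|\vec{Z}_{l,m}(x)|^2=\lambda_l^{-1}\sum_{m=-l}^{l}|\mathrm{Curl}\,Y_{l,m}(x)|^2=\frac{2l+1}{4\pi},\qquad x\in\sph^2.
\end{equation*}
This is the step that carries the geometric content; everything else is bookkeeping.

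Plugging this into the expression for $I(s)$, the inner sum becomes independent of $x$ and equals $(4\pi)^{-1}\sum_{l\ge 1}(l(l+1)+1)^{-2s}(2l+1)$. Therefore
\begin{equation*}
    I(s)=4\pi\Bigl(\frac{1}{4\pi}\sum_{l\ge 1}(l(l+1)+1)^{-2s}(2l+1)\Bigr)^{q/2},
\end{equation*}
and the finiteness of $I(s)$ is equivalent, by comparison, to the convergence of $\sum_{l\ge 1}l^{1-4s}$, which holds if and only if $s>1/2$. The conclusion then follows from \eqref{Uminuss}.

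The main technical obstacle is the pointwise identity $\sum_m|\vec{Z}_{l,m}(x)|^2=(2l+1)/(4\pi)$; it can be justified cleanly by the rotation-invariance argument above, or alternatively by differentiating the classical scalar addition theorem $\sum_m Y_{l,m}(x)\overline{Y_{l,m}(y)}=\frac{2l+1}{4\pi}P_l(x\cdot y)$ twice and restricting to $y=x$. Once this identity is accepted, the remainder is a one-line series convergence test.
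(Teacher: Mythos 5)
Your argument is correct and self-contained: you apply the criterion \eqref{Uminuss} to $B=-\vec{\Delta}+1$, whose eigenbasis $(\vec{Z}_{l,m})$ of $H$ has eigenvalues $l(l+1)+1$ with multiplicity $2l+1$, and the addition-theorem identity $\sum_{m}|\vec{Z}_{l,m}(x)|^2=(2l+1)/(4\pi)$ (via $|\mathrm{Curl}\,Y_{l,m}|=|\nabla Y_{l,m}|$ and rotation invariance) reduces everything to the convergence of $\sum_l l^{1-4s}$, i.e.\ $s>1/2$. The paper itself gives no proof here, only a citation to Lemma 3.1 of \cite{BGQ17}, which proceeds along essentially the same lines, so your write-up supplies the argument the paper omits; the only cosmetic points are that the double index $(l,m)$ must be relabelled to match the single-index statement of \eqref{Uminuss} (harmless, since each eigenvalue simply appears with multiplicity $2l+1$) and that the target space is really the space $\mathbb{L}^q(\mathbb{S}^2)$ of vector fields.
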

\begin{proof}
See proof of Lemma 3.1 in \cite{BGQ17}.
\end{proof}
Let $X=\mathbb{L}^4(\sph^2)\cap H$ be the Banach space endowed with the norm
\begin{align*}
	|x|_X=|x|_H+|x|_{\mathbb{L}^4(\sph^2)}.
\end{align*}
It follows from Lemma \ref{radonifys} that the operator
\begin{align}\label{radons}
	A^{-s}: H\to X\,\,\text{is }\gamma-\text{radonifying iff }s>1/2.
\end{align}

One has to choose $X$ wisely, so that $U: K\to X$ is $\gamma$-radonifying in checking validifty of subordinator condition as in p.156, \cite{MR2584982}.      
The following is our standing assumption.
\\
    
\textbf{Assumption 1.} A continuously embedded Hilbert space $K\subset H\cap \mathbb{L}^4$ is such that for any $\delta\in (0,1/2)$,
\begin{align}
	A^{-\delta}: K\to H\cap \mathbb{L}^4\quad\text{is $\gamma$-radonifying. }
\end{align}
It follows from (\ref{radons}) that if $K=D(A^s)$ for some $s>0$, then assumption 1 is satisfied.
\begin{rmk}
	Under the above assumption, we have the fact that $K\subset H$ and Banach space $X$ is taken as $H\cap L^4$. In fact, space $K:=Q^{1/2}(W)$ is the RKHS of noise $W(t)$ on $H\cap \mathbb{L}^4$ with the inner product $\langle\cdot,\cdot\rangle_K=\langle Q^{-1/2}x,Q^{-1/2}y\rangle_W$, $x,y\in K$. The notation $Q$ denotes the covariance of the noise $W$.
\end{rmk}

Note: The parameters used in Lemma \ref{radonifys} and Assumption 1 are independent. In Lemma \ref{radonifys}, we start with the whole space, a smaller exponent is required to map onto $H\cap\mathbb{L}^4(\mathbb{S}^2)$, so the assumption $s>1/2$ is justified. On the other hand, in Assumption 1, we start with a smaller space, so a bigger exponent is required to map onto $H\cap\mathbb{L}^4(\mathbb{S}^2)$, so $\delta\in (0,1/2).$
\begin{cor}In the framework of Proposition \ref{analytics}, let us additionally assume that there exists a separable Hilbert space $K\subset X$ such that the operator $A^{-\delta}: K\to X$ is $\gamma$-radonifying for some $\delta\in(0,\frac12)$. Then
	\begin{align*}
		\int^{\infty}_0|e^{-tA}|^2_{R(K,X)}dt<\infty.
	\end{align*}
\end{cor}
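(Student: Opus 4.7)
The plan is to factor the semigroup as $e^{-t\widehat A} = \bigl(\widehat A^{\delta}e^{-t\widehat A}\bigr)\circ\widehat A^{-\delta}$ and exploit the ideal property of the $\gamma$-radonifying norm. Since $\mathbf C$ is bounded and $A$ is selfadjoint and nonnegative, the argument already invoked in the proof of Lemma \ref{lem_n1} gives $D(\widehat A^{\delta})=D(A^{\delta})$ with equivalent norms for $\delta\in(0,1)$. Therefore the hypothesis that $A^{-\delta}:K\to X$ is $\gamma$-radonifying transfers at once to $\widehat A^{-\delta}:K\to X$, up to a multiplicative constant depending only on $\delta$.

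Next, the class $R(K,X)$ of $\gamma$-radonifying operators is an operator ideal: for any bounded operator $T\in\mathcal L(X,X)$ and any $S\in R(K,X)$ one has $TS\in R(K,X)$ with
\[
    |TS|_{R(K,X)}\le |T|_{\mathcal L(X,X)}\,|S|_{R(K,X)}\,.
\]
Applying this with $T=\widehat A^{\delta}e^{-t\widehat A}$ and $S=\widehat A^{-\delta}$ yields, for every $t>0$,
\[
    |e^{-t\widehat A}|_{R(K,X)}\le |\widehat A^{\delta}e^{-t\widehat A}|_{\mathcal L(X,X)}\,|\widehat A^{-\delta}|_{R(K,X)}\,.
\]

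I would then invoke the analyticity estimate \eqref{analyticexp} of Proposition \ref{analytics}, which provides $M_{\delta}\ge 1$ and $\mu>0$ with
\[
    |\widehat A^{\delta}e^{-t\widehat A}|_{\mathcal L(X,X)}\le M_{\delta}\,t^{-\delta}e^{-\mu t},\qquad t>0\,.
\]
Squaring and integrating, and writing $c_{\delta}:=|\widehat A^{-\delta}|_{R(K,X)}^{2}<\infty$, we obtain
\[
    \int_{0}^{\infty}|e^{-t\widehat A}|_{R(K,X)}^{2}\,dt
    \le M_{\delta}^{2}\,c_{\delta}\int_{0}^{\infty}t^{-2\delta}e^{-2\mu t}\,dt\,.
\]
Because $\delta\in(0,\tfrac12)$ we have $2\delta<1$, so the singularity at $t=0$ is integrable, while the factor $e^{-2\mu t}$ ensures convergence at infinity; the remaining integral is finite (a standard Gamma integral), proving the corollary.

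The only step that requires a moment of care is the transfer of the $\gamma$-radonifying property from $A^{-\delta}$ to $\widehat A^{-\delta}$, but this is supplied by the complex-interpolation identification of $D(A^{\delta})$ with $D(\widehat A^{\delta})$ already used above; everything else is a clean application of the ideal property of $R(K,X)$ combined with the analytic-semigroup bound already proved.
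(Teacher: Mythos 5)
Your proof is correct and follows essentially the same route as the paper: factor the semigroup through $A^{-\delta}$, apply the ideal property of the $\gamma$-radonifying norm (the paper cites Neidhardt for this), and integrate the analytic-semigroup bound $M_{\delta}^{2}t^{-2\delta}e^{-2\mu t}$ using $2\delta<1$. Your additional care in transferring the hypothesis from $A^{-\delta}$ to $\widehat A^{-\delta}$ is a point the paper's one-line proof glosses over, but it does not change the argument.
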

\begin{proof}
	Since $e^{-tA}=A^{\delta}e^{-tA}A^{-\delta}$, it follows by Neidhardt \cite{neidhardt1978stochastic} that
\begin{align*}
	|e^{-tA}|_{R(K,X)}\le |A^{\delta}e^{-sA}|_{\mathcal{L}(X,X)}|A^{-\delta}|_{R(K,X)},
\end{align*}
and then Proposition \ref{analytics} yields finiteness of the integral. 
%Therefore, since $|A^{\delta}e^{-tA}|_{\mathcal{L}(X,X)}\le C t^{-\delta}e^{-\gamma t}$, for some $C$, $\gamma>0$ and all $s>0$, we inder that if the earlier stated radonifying assumption is satisfied, then for $\delta<\frac{1}{2}$ the integral 
\end{proof}
Let us recall what one means by $M$-type $p$ Banach space (see for instance \cite{MR1313905}). Suppose $p\in[1,2]$ is fixed, then the Banach space $E$ is called type $p$, iff there exists a constant $K_p(E)>0$, such that 
\begin{align*}
	\E\left|\sum^n_{i=1}\xi_i x_i\right|^p\le K_p(E)\sum^n_{i=1}|x_i|^p.
\end{align*}
for any finite sequence of symmetric i.i.d. random variables $\xi_1,\cdots, \xi_n: \Omega\to [-1,1],n\in\N$, and any finite sequence $x_1,\cdots, x_n$ from $E$.

Moreover, a Banach space $E$ is of martingale type $p$ iff there exists $L_p(E)>0$ such that for any $E$-valued  martingale $\{M_n\}^N_{n=0}$ the following holds:
\begin{align*}
	\sup_{n\le N}\E|M_n|^p\le L_p(E)\sum^N_{n=0}\E|M_n-M_{n-1}|^p.
\end{align*}
\begin{comment}
Recall that, the Stokes operator $A$ is the infinitesimal generator of an  analytic strongly continuous semigroup ${e^{-tA}}_{t\ge 0}$ in $X$. We will consider an operator in $X$ defined by 
\begin{align*}
	\hat{A}=\nu A+\mathbf{C},\quad \text{dom}(\hat{A})=\text{dom}(A)
\end{align*}
where $\nu>0$, $\mathbf{C}$ is the Coriolis operator.

\begin{prop}\label{analyticest}
	 The operator $\hat{A}$ with $D(\hat{A})=D(A)$ generates an analytic $C_0$-semigroup $(e^{-t\hat{A}})$ in $X.$ Moreover, there exists $\mu>0$ such that $\forall\,\delta>0$ there exists $M(\delta)\ge 1$ such that 
\begin{align*}
	|\hat{A}e^{-tA}|_{\mathcal{L}(X,X)}\le M(\delta)t^{-\delta}e^{-\mu t},\quad t>0
\end{align*} 	
\end{prop}
\end{comment}
The following is an abstract result from \cite{huang2013random} which will be needed for the rest of this paper.
\begin{lem}[Corollary 8.1,\cite{huang2013random}]\label{leml4}
Assume that: $p\in (1,2]$; $X$ is a subordinator L\'evy process from the class $\text{Sub}(p)$; $E$ is a separable type $p$ Banach space; $U$ is a separable Hilbert space; $E\subset U$; and $W=(W(t),t\ge 0)$ is an $U$-valued Wiener process.

Define a $U$-valued L\'evy process as
\begin{align*}
	L(t)=W(X(t)),\quad t\ge 0\,.
\end{align*}	
Then the $E$-valued process 
\begin{align*}
	z(t)=\int^t_0 e^{-(t-s)(\vec{A}+\alpha I)}dL(s)
\end{align*}
is well defined. Moreover, with probability 1, for all $T>0$,
\begin{align*}
	\int^T_0 |z(t)|^p_E dt<\infty,
\end{align*}

\begin{align*}
	\int^T_0 |z(t)|^4_{L^4} dt<\infty.
\end{align*}
\end{lem}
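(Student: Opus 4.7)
The plan is to establish the three claims --- well-definedness of $L$ as a $U$-valued L\'evy process, well-definedness of the convolution $z(t)$ taking values in $E$, and the two pathwise integrability bounds --- by working conditionally on the subordinator $X$ and combining the martingale type $p$ structure of $E$ with the analytic semigroup estimate \eqref{analyticexp}.

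First, I would verify that $L(t)=W(X(t))$ is a genuine $U$-valued L\'evy process. This is the standard subordination argument: $X$ has nondecreasing c\`adl\`ag paths and $W$ is a $U$-valued Wiener process independent of $X$, so the composition $W\circ X$ inherits independent stationary increments with c\`adl\`ag trajectories from the two building blocks. No technical obstruction arises at this step.

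Next, to give meaning to $z(t)=\int^t_0 e^{-(t-s)(\vec{A}+\alpha I)}\,dL(s)$ in $E$, I would invoke the theory of stochastic integration in martingale type $p$ Banach spaces from \cite{MR1313905}, which is the framework behind Corollary 8.1 of \cite{huang2013random}. Conditioning on a realisation of $X$, the integrator $L$ becomes a time-changed Wiener process, so the stochastic convolution reduces to a Gaussian integral in $E$ with time change $dX(s)$. The integrand $s\mapsto e^{-(t-s)(\vec{A}+\alpha I)}$ is a deterministic operator family whose norm decays like $(t-s)^{-\delta}e^{-\mu(t-s)}$ by Proposition \ref{analytics}, which combined with the $\gamma$-radonifying hypothesis of Assumption~1 ensures that the conditional Gaussian convolution defines an $E$-valued random variable almost surely.

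For the pathwise integrability bounds, the plan is a Fubini argument coupled with a conditional moment estimate. For fixed $t$, the type $p$ property of $E$ together with the conditional Gaussian structure yields
\[\E\bigl[|z(t)|^p_E\,\big|\,X\bigr]\le C\int^t_0 |e^{-(t-s)(\vec{A}+\alpha I)}|^p_{R(K,E)}\,dX(s),\]
and the corollary following Proposition \ref{analytics} ensures that the $\gamma$-radonifying norm on the right is Lebesgue-integrable on $[0,T]$. Integrating in $t$, swapping order via Fubini, and taking the outer expectation over $X$, the $\mathrm{Sub}(p)$ moment control on $X$ yields $\E\int^T_0 |z(t)|^p_E\,dt<\infty$, from which the almost sure finiteness follows. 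The $\mathbb{L}^4$-bound is obtained by running the same argument with $E$ replaced by $\mathbb{L}^4(\sph^2)$ and Lemma \ref{radonifys} in place of Assumption~1; only pathwise finiteness is claimed, which is compatible with the noise having no fourth moment.

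The main obstacle is reconciling the stable L\'evy character of the noise, which has only low-order moments, with the $p$-integral bounds required by the Banach-space stochastic integral. This is precisely where the $\mathrm{Sub}(p)$ class enters: after the conditioning turns the L\'evy integral into a Gaussian one, the moment condition on the subordinator makes the re-averaging tractable, while the type $p$ property of $E$ and the $\gamma$-radonifying assumption on $K\hookrightarrow E$ supply the two structural ingredients that close the conditional Gaussian bound. In the end the argument reduces to verification of these abstract hypotheses in the concrete setting of the sphere.
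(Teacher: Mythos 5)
The paper itself offers no proof of this lemma: it is imported verbatim as Corollary 8.1 of \cite{huang2013random}, so there is no internal argument to compare yours against. Judged on its own terms, your outline follows the standard route for subordinated noise (condition on the subordinator $X$, exploit the conditionally Gaussian structure of $W(X(\cdot))$, control the convolution through the $\gamma$-radonifying norm of $A^{-\delta}$ together with the analytic estimate \eqref{analyticexp}, then re-average over $X$ using the $\mathrm{Sub}(p)$ condition). For the well-definedness of $L$ and $z$ and for the $p$-th power bound this is essentially the right argument.

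The gap is in the last claim. You propose to obtain $\int_0^T|z(t)|^4_{\mathbb L^4}\,dt<\infty$ by ``running the same argument'', but that argument terminates by taking the outer expectation over $X$, and this step is unavailable here: for a $\beta/2$-stable subordinator with $\beta<2$ the unconditioned process has no finite moments of order $\ge\beta$, so $\mathbb E\int_0^T|z(t)|^4_{\mathbb L^4}\,dt$ is infinite and a.s. finiteness cannot be read off from a moment bound. You remark that only pathwise finiteness is claimed, but the written chain of steps does not actually produce it. The repair is to stop at the conditional level: almost surely in the realisation of $X$, the conditional law of $z$ is Gaussian, hence
\[\mathbb E\Bigl[\int_0^T|z(t)|^4_{\mathbb L^4}\,dt\;\Big|\;X\Bigr]\le C\int_0^T\Bigl(\int_0^t\bigl|e^{-(t-s)(\hat A+\alpha I)}\bigr|^2_{R(K,\mathbb L^4)}\,dX(s)\Bigr)^{2}dt,\]
which is finite for a.e.\ path of $X$ because $X(T)<\infty$ a.s.\ and the $R(K,\mathbb L^4)$-norm is square-integrable near the origin by the corollary following Proposition \ref{analytics}; almost sure finiteness of the conditional expectation then yields almost sure finiteness of the integral itself. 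Without this explicit conditional step the fourth-power claim --- precisely the part of the lemma that is delicate for stable noise and that the rest of the paper relies on --- is not established.
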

The following existence and regularity result is a version of the result in \cite{MR2584982}.
\begin{thm}
Let the process $L$ be defined in the same way as in Lemma \ref{leml4}. Assume that one of the following conditions is satisfied:
	\begin{itemize}
		\item [(i)] $p\in (0,1]$ or
		\item [(ii)] the Banach space $E$ is separable and of martingale type $p$ for a certain $p\in(1,2]$. \end{itemize}
		Then the process 
	\begin{align}\label{stokes}
		z_{\alpha}(t)=\int^t_{-\infty}e^{-(t-s)(\hat{A}+\alpha I)}dL(s)
	\end{align} 
is well defined in $E$ for all $t>0$. Moreover, if $p\in (1,2]$,  then the process $z$ of (\ref{stokes}) is  c\`adl\`ag. 
\end{thm}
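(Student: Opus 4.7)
The plan is to construct $z_\alpha(t)$ as the almost-sure $E$-valued limit of the truncated convolutions
\[
z_\alpha^{(n)}(t):=\int_{-n}^t e^{-(t-s)(\hat A+\alpha I)}\,dL(s),\qquad n\ge 1,
\]
and then, in the martingale-type-$p$ case, to upgrade to a càdlàg modification.

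For the first step, the well-posedness of each $z_\alpha^{(n)}(t)$ in $E$ follows, after a shift of the integration variable that uses the stationarity of the increments of $L$, from Lemma \ref{leml4}: the hypotheses of that lemma hold because Assumption 1 supplies a Hilbert space $K\subset H\cap\mathbb L^4$ such that $A^{-\delta}:K\to E$ is $\gamma$-radonifying for every $\delta\in(0,1/2)$, and $L$ is by construction the subordinated Wiener process required there.

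To pass to the limit $n\to\infty$ I would show that $(z_\alpha^{(n)}(t))_n$ is Cauchy in $L^p(\Omega;E)$. The key input is the exponentially damped analytic estimate of Proposition \ref{analytics}, namely $\|\hat A^\delta e^{-r(\hat A+\alpha I)}\|_{\mathcal L(E)}\le M_\delta\, r^{-\delta}e^{-(\mu+\alpha)r}$. In case (ii), with $E$ of martingale type $p\in(1,2]$, the associated Burkholder-type inequality yields
\[
\mathbb E\bigl|z_\alpha^{(m)}(t)-z_\alpha^{(n)}(t)\bigr|_E^p
\le C\int_n^m \bigl\|e^{-r(\hat A+\alpha I)}A^{-\delta}\bigr\|_{R(K,E)}^p\,dr
\le C'\int_n^m r^{-\delta p}e^{-(\mu+\alpha)pr}\,dr,
\]
which vanishes as $m,n\to\infty$: picking $\delta<1/(2p)$ makes the singularity at $r=0$ integrable, while the exponential tames the tail. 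In case (i), with $p\in(0,1]$, the subadditivity $|u+v|^p\le |u|^p+|v|^p$ applied to the Poisson jump representation of the $\beta$-stable integral, combined with the finite-interval moment bound of Lemma \ref{lem_n1}, gives the analogous tail estimate, so that $z_\alpha(t):=\lim_{n\to\infty} z_\alpha^{(n)}(t)$ exists in $E$ for every fixed $t>0$.

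For the càdlàg assertion in case (ii), I would argue via stationarity: after translation of the time origin to $-n$, each truncated convolution $z_\alpha^{(n)}$ has a càdlàg modification in $D(A^\delta)\hookrightarrow E$ by Lemma \ref{cadlaglem}, whose summability hypothesis $\sum|\sigma_l|^\beta\lambda_l^{\beta\delta}<\infty$ is exactly the $\gamma$-radonifying assumption of Assumption 1 in coordinate form. The Cauchy estimate above can be refined to $\mathbb E\sup_{t\in[0,T]}|z_\alpha^{(m)}(t)-z_\alpha^{(n)}(t)|_E^p\to 0$ by applying the martingale-type-$p$ maximal inequality in place of the plain Burkholder one, so that $z_\alpha$ inherits the càdlàg regularity on every compact interval. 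The main obstacle I anticipate is producing the correct martingale-type-$p$ maximal inequality for stochastic integrals driven by the non-Gaussian, non-square-integrable subordinated integrator $L$; this is precisely the content of the abstract framework in \cite{MR2584982,huang2013random}, and once it is invoked the remaining steps are routine semigroup estimates.
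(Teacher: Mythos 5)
Your proposal is correct in outline, but it takes a visibly more constructive route than the paper, whose entire proof consists of two sentences: observe that $S$ is a $C_0$-semigroup on the separable martingale-type-$p$ space $E$, that the reproducing kernel Hilbert space of $W(1)$ embeds into $E$ in a $\gamma$-radonifying way, and then invoke Theorems 4.1 and 4.4 of \cite{MR2584982} wholesale — both the existence of the integral over $(-\infty,t]$ and the c\`adl\`ag regularity are delegated to that reference. What you do is essentially reconstruct the internal mechanism of those theorems: truncation at $-n$, a Cauchy estimate in $L^p(\Omega;E)$ driven by the damped analytic bound of Proposition \ref{analytics} (the exponential factor $e^{-(\mu+\alpha)pr}$ is indeed what makes the tail $\int_n^m$ vanish, and the $r^{-\delta p}$ singularity is irrelevant there since $r\ge t+n$), and a maximal inequality to upgrade to c\`adl\`ag paths. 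This buys transparency but does not remove the dependence on the external results: as you yourself note, the one genuinely nontrivial ingredient — a Burkholder-type $p$-th moment and maximal inequality for integrals against the subordinated, non-square-integrable process $L=W(X)$ — is exactly what \cite{MR2584982} and \cite{huang2013random} supply, so in the end both proofs bottom out in the same citations. One caveat you should make explicit if you flesh this out: the inequality $\mathbb E\bigl|\int \Phi\,dL\bigr|_E^p\le C\int\|\Phi\|_{R(K,E)}^p$ cannot be applied naively to $L$ itself, since a $\beta$-stable process has finite moments only of order $p<\beta<2$, which is incompatible with the martingale-type exponent $p\in(1,2]$ in general; the cited framework circumvents this by conditioning on the subordinator path $X$ (so that conditionally one integrates against a time-changed Wiener process and the Gaussian machinery applies), and your sketch should be read as running inside that conditioning rather than as a direct moment bound for $L$. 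Your identification of the summability hypothesis of Lemma \ref{cadlaglem} with the $\gamma$-radonifying condition of Assumption 1 is also only heuristic (it holds in the diagonal case $Ge_l=\sigma_le_l$), but that matches the level of rigour of the paper's own treatment.
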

\begin{proof}
	As $S=(S(t),t\ge 0)$ is a $C_0$ semigroup in the separable martingale type $p$-Banach space $E$, there exists a Hilbert space $H$ as the reproducing Kernel Hilbert space of $W(1)$ such that the embedding $i: H\hookrightarrow E$ is $\gamma$-radonifying. The proof of this theorem is a straight forward application of Theorem 4.1 and 4.4 in \cite{MR2584982}.
\end{proof}
%\subsection{Subordinated L\'evy Process and subordinators}\label{ssec:drivingnoise}
In order to obtain well-posedness of (\ref{SNSE4}), one needs some regularity on the noise term. Fortunately, this becomes attainable using Lemma \ref{leml4} . In view of this, we construct the driving L\'evy noise $L=L(t)$ by subordinating a cylindrical Wiener process $W$ on a Hilbert space $H$ as defined in (\ref{abstractH}).    
Let $\{W^l_t,t\ge 0\}$ be a sequence of independent standard one-dimensional Wiener processes on some given probability space $(\Omega,\mathcal{F},\P)$. The cylindrical Wiener process on $H$ is defined by
\begin{align*}
	W(t):=\sum_l W^l_t e_l,
\end{align*}
where $e_l$ is the complete orthonormal system of eigenfunctions on $H$.

For $\beta\in (0,2)$, let $X(t)$ be an independent symmetric $\beta/2$-stable subordinator. That is, an increasing, one dimensional L\'evy process with the Laplace Transform
\begin{align*}
	\E e^{-r X(t)}=e^{-t|r|^{\beta/2}},\quad r>0.
\end{align*}
The subordinated cylindrical Wiener process $\{L(t),t\ge 0\}$ on $H$ is defined by
\begin{align*}
	L(t):=W(X(t)),\quad t\ge 0.
\end{align*}
Note in general that $L(t)$ does not belong to $H.$ More precisly, $L(t)$ lives on some larger Hilbert space $U$ with the $\gamma$-radonifying embedding $H\hookrightarrow U$.
%\subsection{The transformation}
Now, let us consider the
abstract It\^o equation in (\ref{asnse4}) (which we restate here) in $H=L^2(\mathbb{S}^2):$
\begin{align}\label{snse}
	d u(t)+\nu A u(t)dt+B( u(t), u(t))dt+ \mathbf{C} u= fdt+GdL(t),\quad  u(0)= u_0.
\end{align}
Writing (\ref{asnse4}) into the usual mild form one has
\begin{align}\label{mild}
	 u(t)=S(t) u_0-\int^t_0 S(t-s) B( u(s))ds+\int^t_0 S(t-s)fds+\int^t_0 S(t-s)GdL(s).
\end{align}
where $S(t)$ is an analytic $C_0$ semigroup $(e^{-t\hat{A}})$ generated by $\hat{A}=\nu A+ \mathbf{C}$, and $A$ is the Stokes operator in $H$. Note that $\hat{A}$ is a strictly positive selfadjoint operator in $H$, that is $A: D(A)\subset H\to H$, $\hat{A}=\hat{A}^*>0$, $\langle Av,v\rangle\ge\gamma|v|^2$ for any $v\in D(A)$ for some $\gamma>0$ and $v\ne 0$. The operator $G:H\to H$ is a bounded linear operator. For a fixed $\alpha>0$ we introduce the process 
\[z_{\alpha}(t) :=\int_0^t e^{-(t-s)(\alpha+\widehat A)}GdL(t)\]
that solves the OU equation
\begin{align}\label{Ornstein Uhlenbeck}
	d z_{\alpha}+(\nu  A + \mathbf{C}+\alpha) z_{\alpha}dt=GdL(t),\quad t\ge 0\,.
\end{align}
Now let $ v(t)= u(t)-z_\alpha(t)$.  Then
\begin{align*}
	\begin{cases}
		d v(t)+\nu A( u(t)- z_{\alpha}(t))dt+B( u(t))dt+ \mathbf{C}( u- z_{\alpha}(t))dt-\alpha  z_{\alpha}(t)dt= fdt,\\
	 v(0)= v_0.
	\end{cases}
\end{align*}
The problem becomes
 \begin{align*}
 	\begin{cases} 		
		d v(t)+\nu  A  v(t)dt+B( v(t)+ z_{\alpha}(t))dt+ \mathbf{C} v(t)dt-\alpha  z_{\alpha}(t)dt= f dt, \\
 		 v(0)= v_0.
 	\end{cases}
 \end{align*}
 Converting into the standard form,
 \begin{align}\label{ODE}
 	\begin{cases} 		
		\frac{d^+}{dt} v(t)+(\nu A+ \mathbf{C}) v(t)= f-B( v(t)+ z_{\alpha}(t))+\alpha  z_{\alpha}(t),\\
 		 v(0)= v_0.
 	\end{cases}
 \end{align}
 where $\frac{d^+v}{dt}$ is the right-hand derivative of $v(t)$ at $t$.
Solution to equation \eqref{ODE} will be understood in the mild sense, that is as a solution to the integral equation 
\begin{align}\label{mildv}
 v(t)=S(t) v(0)+\int^t_0 S(t-s)(f-B( v(s)+ z_{\alpha}(s))+\alpha  z_{\alpha}(s))ds,
\end{align}
with $ v_0= u_0- z_{\alpha}(0).$
One can easily show that (\ref{ODE}) and (\ref{mildv}) are equivalent %(and so \ref{ODE} and \ref{mild} are equivalent )
 for $ v\in C(0,\infty;V)\cap L^2_{\text{loc}}(0,\infty;D(A)).$ More precisely, (\ref{mildv}) follows from (\ref{ODE}) via integration. Then (\ref{ODE}) follows from (\ref{mildv}) via the usual continuity argument  (see Lebesgue Dominated Convergence Theorem in the Appendix), namely, differentiating the integral when the integrand is continuous.

For brevity, we write $z_{\alpha}$ as $z$.
Let us now explain what is meant by a  solution of (\ref{asnse4}).

\begin{defn}
	Suppose that $z\in L^4_{\text{loc}}([0,T);\mathbb{L}^4(\mathbb{S}^2)\cap H)$, $ v_0\in H$, $f\in V'$. A weak solution to (\ref{asnse4}) is a function $v\in C([0,T);H)\cap L^2_{\text{loc}}([0,T);V)$ which satisfies (\ref{ODE}) in a weak sense for any $\phi\in V$, $T>0$, and
	\begin{align}\label{weakode}
\partial_t( v,\phi)=( v_0,\phi)-\nu( v,A\phi)-b( v+ z, v+ z,\phi)-( \mathbf{C} v,\phi)+(\alpha z+ f,\phi).
	\end{align}
Equivalently, (\ref{ODE}) holds as an equality in $V'$ for a.e. $t\in[0,T]$. 
\end{defn}
Now if $ f\in H$, and the following regularity is satisfied,
\begin{align}
	 v\in L^{\infty}(0,T;V)\cap L^2(0,T;D(A)),
\end{align}
then the solution becomes strong. More precisely,
\begin{defn}[Strong solution]\label{ssoln}
Suppose that $z\in L^4_{\text{loc}}([0,T);\mathbb{L}^4(\mathbb{S}^2)\cap H)$, $ v_0\in V$, $f\in H$. We say that $u$ is a \emph{strong solution} of the stochastic Navier-Stokes equations (\ref{asnse4}) on the time interval $[0,T]$ if $u$ is a weak solution of (\ref{asnse4}) and in addition
\begin{align}
	 u\in L^{\infty}(0,T;V)\cap L^2(0,T;D(A)).
\end{align}
\end{defn}

The main theorems proved in this paper are the following.

\begin{thm}\label{t3}
	Assume that $\alpha\ge 0$, $ z\in L^4_{\text{loc}}([0,\infty);\mathbb{L}^4(\mathbb{S}^2)\cap H)$, $ f\in H$ and $ v_0\in H.$ Then, there exists a unique solution of (\ref{mildv}) in the space $C(0,T;H)\cap L^2(0,T;V)$ which belongs to $C(h,T; V)\cap L^2_{\text{loc}}(h,T;D(A))$ for all $h>0$ and $T>0.$ Moreover, if $ v_0\in V$, then $ v\in C(0,T; V)\cap L^2_{\text{loc}}(0,T;D(A))$ for all $T>0.$ In particular, $ v(T,z_n) u^0_n\to  v(T,z_n) u_0$ in $H.$ Moreover, if
	\[\sum_{l=1}^\infty|\sigma_l|^{\beta}\lambda^{\beta/2}_l<\infty\,,\]
	then the theorem holds.
\end{thm}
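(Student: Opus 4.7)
The plan is to prove Theorem \ref{t3} by the classical Galerkin scheme combined with a priori estimates tailored to the perturbed nonlinearity $B(v+z,v+z)$, where only $v$ has $V$-regularity while $z$ is merely in $\mathbb{L}^4\cap H$. First I would construct Galerkin approximants $v_n$ by projecting (\ref{ODE}) onto the span of the first $n$ eigenfunctions $\{\vec{Z}_{l,m}\}$ of $A$; the resulting finite-dimensional ODE is locally well-posed, and the a priori bound below upgrades this to global existence.

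Taking the $H$-inner product of the Galerkin equation with $v_n$, using $(\mathbf{C}v_n,v_n)=0$ from (\ref{corio}) and the cancellations $b(w,v_n,v_n)=0$ from (\ref{b0}), I obtain
\[\tfrac12\tfrac{d}{dt}|v_n|^2+\nu|v_n|_V^2=-b(v_n,z,v_n)-b(z,z,v_n)+\alpha(z,v_n)+(f,v_n).\]
After integrations by parts $b(v_n,z,v_n)=-b(v_n,v_n,z)$ and $b(z,z,v_n)=-b(z,v_n,z)$, applying (\ref{b5}) together with Ladyzhenskaya's inequality (\ref{ladyzhen}) gives
\[|b(v_n,v_n,z)|\le c|v_n|^{1/2}|v_n|_V^{3/2}|z|_{\mathbb{L}^4},\qquad |b(z,v_n,z)|\le c|z|_{\mathbb{L}^4}^2|v_n|_V.\]
Young's inequality absorbs the top-order terms into $\tfrac{\nu}{2}|v_n|_V^2$, leaving the differential inequality
\[\tfrac{d}{dt}|v_n|^2+\nu|v_n|_V^2\le c\bigl(1+|z|_{\mathbb{L}^4}^4\bigr)|v_n|^2+c\bigl(|z|_{\mathbb{L}^4}^4+|f|^2\bigr),\]
which, since $z\in L^4_{\mathrm{loc}}([0,\infty);\mathbb{L}^4\cap H)$, yields via Gronwall uniform bounds $v_n\in L^\infty(0,T;H)\cap L^2(0,T;V)$. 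Estimating $\dot v_n$ in $L^{4/3}(0,T;V')$ via (\ref{bvdual}) and invoking Aubin--Lions compactness extracts a subsequence converging strongly in $L^2(0,T;H)$, and passage to the limit in the weak formulation (\ref{weakode}) is standard.

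For uniqueness and continuous dependence in $H$, setting $w=v^{(1)}-v^{(2)}$ gives
\[\tfrac12\tfrac{d}{dt}|w|^2+\nu|w|_V^2=-b(w,v^{(1)}+z,w),\]
the other nonlinear term vanishing by (\ref{b0}); integration by parts with (\ref{b5}) and (\ref{ladyzhen}) leads to $\tfrac{d}{dt}|w|^2\le c\bigl(|v^{(1)}|_V^2+|z|_{\mathbb{L}^4}^4\bigr)|w|^2$, and Gronwall concludes since the coefficient is in $L^1(0,T)$. The smoothing claim $v\in C(h,T;V)\cap L^2_{\mathrm{loc}}(h,T;D(A))$ is obtained by testing the Galerkin equation with $tAv_n$ and estimating $(B(v_n+z,v_n+z),Av_n)$ via (\ref{b4}) and (\ref{b5}), absorbing the top-order $|Av_n|^2$ into $\nu|Av_n|^2$ through Young; when $v_0\in V$, the same test with $Av_n$ in place of $tAv_n$ produces $C(0,T;V)\cap L^2_{\mathrm{loc}}(0,T;D(A))$.

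The main obstacle throughout is that $z$ has only $\mathbb{L}^4$-regularity, so the customary estimates placing $z$ in a $V$-slot are unavailable. The remedy, used in every step above, is to put $z$ into the $\mathbb{L}^4$-slot of (\ref{b5}) after a careful integration by parts, trading $\mathbb{L}^4$-regularity of $z$ for $\mathbb{L}^4$-regularity of $v$, which is recovered from $H\cap V$ by Ladyzhenskaya. The final clause of the theorem is not an additional PDE step but follows from Lemma \ref{lem_n1} applied with $\delta=1/2$: the hypothesis $\sum_l|\sigma_l|^\beta\lambda_l^{\beta/2}<\infty$ gives $z\in L^\infty_{\mathrm{loc}}([0,\infty);V)$, which by Sobolev embedding on $\mathbb{S}^2$ sits inside $L^4_{\mathrm{loc}}([0,\infty);\mathbb{L}^4\cap H)$, verifying the standing assumption under which the preceding deterministic argument applies.
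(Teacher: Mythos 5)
Your route is genuinely different from the paper's. The paper does not use Galerkin at all (it only remarks in passing that one could): it proves local existence and uniqueness for $v_0\in V$ by a Banach fixed-point argument for the map $\Gamma$ in $Y_\tau=C(0,\tau;V)\cap L^2(0,\tau;D(A))$, built on Lions' maximal-regularity facts for $t\mapsto\int_0^t S(t-s)f(s)\,ds$, then extends globally by the same two energy estimates you derive (inner product with $v$, then with $Av$), and finally handles $v_0\in H$ by approximating with data $v_{0,n}\in V$ and showing the corresponding solutions are Cauchy. Your Galerkin-plus-Aubin--Lions scheme reaches $v_0\in H$ directly, without that second approximation layer, and your $H$-level estimate is actually sharper than the paper's: by routing everything through (\ref{b5}) and (\ref{ladyzhen}) you only ever pay $|z|_{\mathbb{L}^4}^4$, whereas the paper's inequality (\ref{ptv2}) carries a factor $|z|_V^2$ that is not covered by the stated hypothesis $z\in L^4_{\mathrm{loc}}(\mathbb{L}^4\cap H)$. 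What the fixed-point approach buys in exchange is an immediate local Lipschitz dependence on $v_0$, $f$ and $z$ through the contraction constants, which the paper then invokes for the continuous-dependence claim.

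The one step where your proposal is too optimistic is the smoothing estimate. When you test with $tAv_n$ (or with $Av_n$ for $v_0\in V$), the terms in which the derivative falls on $z$ --- namely $(B(v_n,z),Av_n)$ and above all $(B(z,z),Av_n)$ --- cannot be controlled by $|z|_{\mathbb{L}^4}$ via (\ref{b4}) and (\ref{b5}): the middle slot of $b$ consumes a gradient of its argument, and integrating by parts to unload that gradient onto $Av_n$ only makes matters worse. The paper faces the same difficulty and resolves it by writing those bounds with $|z|_V$ and by absorbing $B(z,z)$ into the forcing $F$, which is legitimate only under the extra summability condition $\sum_l|\sigma_l|^\beta\lambda_l^{\beta/2}<\infty$ which, through Lemma \ref{lem_n1} with $\delta=1/2$, places $z$ in $V$. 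You correctly identify that this is the role of the final clause of the theorem, but you should make the entire $V$-level stage explicitly conditional on it rather than suggesting it goes through under the $\mathbb{L}^4$ hypothesis alone; as written, the sentence ``estimating $(B(v_n+z,v_n+z),Av_n)$ via (\ref{b4}) and (\ref{b5})'' does not close.
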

\vspace{1cm}
\begin{comment}
Finally, we study (\ref{asnse4}) for $\omega\notin N$, where $N\subset\Omega$ is a negligible set defined in as $K_T(\omega) :=\sup_{0\le t\le T}|z_t(\omega)|_V.$ Since $z\cdot(\omega)\in D([0,T);V)$  and so $ z\cdot(\omega)\in D([0,T);H)$. By part I of Theorem (\ref{t3}), $ u\cdot(\omega)= v\cdot(\omega)+z\cdot(\omega)$ is the $\P$-a.s. unique solution to (\ref{asnse}) in $D([0,T);H)\cap D([0,T);V)$ which belongs to $D([\epsilon,T);V)\cap L^2_{\text{loc}}(\epsilon,T);D(A))$ for all $\epsilon>0.$ Moreover, if $ u_0\in V$, then $ u\in D([0,T);V)\cap L^2_{\text{loc}}([0,T);D(A))$ for all $T>0$, $\omega\in\Omega$.
\end{comment}
\begin{thm}\label{t4}
	Assume that $\alpha\ge 0$, $z\in L^4_{\text{loc}}([0,\infty);\mathbb{L}^4(\mathbb{S}^2)\cap H)$, $ f\in H$ and $ v_0\in H.$ Then, there exists  $\P$-a.s. unique solution of (\ref{asnse4}) in the space $D(0,T;H)\cap L^2(0,T;V),$ which belongs to $D(\epsilon,T; V)\cap L^2_{\text{loc}}(\epsilon,T;D(A))$ for all $\epsilon>0,$ and $T>0.$ Moreover, if $ v_0\in V$, then $ u\in D(0,T; V)\cap L^2_{\text{loc}}(0,T;D(A))$ for all $T>0$, $\omega\in\Omega$. Moreover, if
	\[\sum_{l=1}^\infty|\sigma_l|^{\beta}\lambda^{\beta/2}_l<\infty\,,\]
	then the theorem holds.
\end{thm}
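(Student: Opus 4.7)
The plan is to reduce (\ref{asnse4}) to the pathwise deterministic equation (\ref{ODE}) for $v=u-z_\alpha$, apply Theorem \ref{t3} $\omega$-by-$\omega$, and then reconstruct $u=v+z_\alpha$. The key probabilistic input is the c\`adl\`ag regularity of the Ornstein--Uhlenbeck process $z_\alpha$ from Lemma \ref{cadlaglem}; the remaining work is essentially algebraic, with the $\omega$-dependence confined to the source term of (\ref{ODE}).

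First, I would identify a $\P$-negligible set $N\subset\Omega$ outside of which the OU process has the regularity required by Theorem \ref{t3}. With the choice $\delta=\tfrac{1}{2}$, the assumption $\sum_{l\ge 1}|\sigma_l|^\beta\lambda_l^{\beta/2}<\infty$ is precisely the hypothesis of Lemma \ref{cadlaglem}, so after passing to a suitable modification we have $z_\alpha(\cdot,\omega)\in D([0,\infty);D(A^{1/2}))=D([0,\infty);V)$ for $\omega\notin N$. Combined with the Sobolev-type embedding $V\hookrightarrow\mathbb{L}^4(\mathbb{S}^2)$ underlying (\ref{ladyzhen}), this yields
\[
z_\alpha(\cdot,\omega)\in D([0,T];V)\subset L^{\infty}(0,T;V)\subset L^{4}(0,T;\mathbb{L}^{4}(\mathbb{S}^2)\cap H),\quad T>0,\ \omega\notin N,
\]
which is exactly the hypothesis placed on $z$ in Theorem \ref{t3}.

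Next, fix $\omega\notin N$ and apply Theorem \ref{t3} to (\ref{ODE}) with this frozen $z_\alpha(\cdot,\omega)$ to obtain a unique $v(\cdot,\omega)\in C([0,T];H)\cap L^{2}(0,T;V)$ lying in $C([\epsilon,T];V)\cap L^{2}_{\mathrm{loc}}(\epsilon,T;D(A))$ for every $\epsilon>0$, and in $C([0,T];V)\cap L^{2}_{\mathrm{loc}}(0,T;D(A))$ when $v_0\in V$. Set $u(t,\omega):=v(t,\omega)+z_\alpha(t,\omega)$. Since the sum of a continuous trajectory and a c\`adl\`ag trajectory is c\`adl\`ag, this gives $u\in D([0,T];H)\cap L^{2}(0,T;V)$, together with the corresponding improvements on $(\epsilon,T]$ and, under $v_0\in V$, on $[0,T]$. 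A direct computation based on the variation-of-constants formula for $z_\alpha$ shows that $u=v+z_\alpha$ satisfies the mild form (\ref{mild}); in particular, the extra $\alpha z_\alpha$ drift in (\ref{ODE}) is absorbed by the identity
\[
\alpha\int_0^t e^{-(t-s)\widehat{A}}z_\alpha(s)\,ds+\int_0^t e^{-(t-s)(\widehat{A}+\alpha)}G\,dL(s)=\int_0^t e^{-(t-s)\widehat{A}}G\,dL(s),
\]
which follows from Fubini together with the semigroup property.

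Finally, uniqueness in $D([0,T];H)\cap L^{2}(0,T;V)$ transfers from Theorem \ref{t3}: given two such solutions $u^{(1)},u^{(2)}$, both $v^{(i)}:=u^{(i)}-z_\alpha$ solve (\ref{ODE}) for the same realisation of $z_\alpha$ and lie in $C([0,T];H)\cap L^{2}(0,T;V)$, since the jumps of the $u^{(i)}$ must coincide with those of $z_\alpha$; hence $v^{(1)}=v^{(2)}$ and so $u^{(1)}=u^{(2)}$. The main obstacle I anticipate is precisely this last point, namely justifying that any solution $u$ in the stated class has the same jump set as $z_\alpha$. I would handle it by reading off (\ref{mild}): under the regularity $u\in L^{2}(0,T;V)$, the Bochner integrals involving $-Au$, $-B(u,u)$, $\mathbf{C}u$ and $f$ are continuous in $t$ (using (\ref{b3}) to control the nonlinearity), so every discontinuity of $u$ originates from the stochastic convolution $\int_0^{\cdot}e^{-(t-s)\widehat{A}}G\,dL(s)$, which by the identity above has the same jumps as $z_\alpha$.
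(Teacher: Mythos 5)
Your proposal is correct and follows essentially the same route as the paper: use the summability condition with Lemma \ref{cadlaglem} (and Lemma \ref{lem_n1}) to get a full-measure set on which $z_\alpha(\cdot,\omega)\in D([0,\infty);V)$, note this implies the $L^4_{\mathrm{loc}}$ hypothesis on $z$, apply Theorem \ref{t3} pathwise, and reconstruct $u=v+z_\alpha$. Your additional care with the variation-of-constants identity absorbing the $\alpha z_\alpha$ drift and with matching the jump sets of $u$ and $z_\alpha$ in the uniqueness step fills in details the paper leaves implicit, but does not change the argument.
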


\section{Proof of Theorem \ref{t3}: Strong solutions}\label{sec:num45}
Suppose now $ f\in H$. In what proceeds we will show that if $ u_0\in V$ then we obtain a more regular kind of solution and deduce that if $ v_0\in H$ then $ v(t)\in V$ for every $t>0.$ In this paper, we will construct a unique global strong solution (in the sense of Definition \ref{ssoln}).
\\

The proof of Theorem \ref{t3} follows closely to Theorem 3.1 in \cite{MR1207308}. However in the proof in \cite{MR1207308} there is no Coriolis force and additive noise, whereas here there are. In particular, our constants in the proof now depend on $|F(t)|$ , $|z(t)|$ and $|z(t)|_V$, but not on the Coriolis term due to the antisymmetric condition $(\mathbf{C}v,Av)=0.$

\begin{rmk}
    One can alternatively prove Theorem \ref{t3} via the usual Galerkin approximation.
\end{rmk}     
\vspace{1cm}         
\subsection{Existence and uniqueness of a strong solution with $ v_0\in V$}
\vspace{-0.4cm}
The following function spaces are introduced for convenience.
\begin{defn}
The spaces
    \begin{align}
        X_T:=C(0,T;H)\cap L^2(0,T;V),
    \end{align}
    \begin{align}
            Y_T=C(0,T;V)\cap L^2(0,T;D(A))
    \end{align}
are endowed with the norms
\begin{align*}
    |\cdot|_{X_T}:=|\cdot|_{C(0,T;H)}+|\cdot|_{L^2(0,T;V)},
\end{align*}
\begin{align*}
    |\cdot|_{Y_T}:=|\cdot|_{C(0,T;V)}+|\cdot|_{L^2(0,T;D(A))}.
\end{align*}
\end{defn}
Or explicitly,
    \begin{align*}
        |f|^2_{X_T}&=\sup_{0\le t\le T}|f(t)|^2+\int^T_0|f(s)|^2_Vds,
    \end{align*}
    \begin{align*}
        |f|^2_{Y_T}&=\sup_{0\le t\le T}|f(t)|^2_V+\int^T_0|Af(s)|^2ds.
    \end{align*}

Let $\mathcal{K}$ be the map in $Y_T$ defined by
\begin{align*}
    \mathcal{K}( u)(t)=\int^t_0 S(t-s)B( u(s))ds,\quad t\in[0,T],\,\,u\in Y_T.
\end{align*}
The following is a crucial lemma for the proof of existence and uniqueness.
\begin{lem}
    There exists $c>0$ such that for every $ u, v\in Y_T$,
    \begin{align*}
        |\mathcal{K}( u)|^2_{Y_T}\le c| u|^2_{Y_T}\sqrt{T},
    \end{align*}
    \begin{align*}
    |\mathcal{K}( u)-\mathcal{K}( v)|^2_{Y_T}\le c| u- v|^2_{Y_T}(| u|^2_{Y_T}+| v|^2_{Y_T})\sqrt{T}.   
    \end{align*}
\end{lem}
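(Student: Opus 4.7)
The plan is to bound each component of the $Y_T$-norm of $\mathcal K(u)$ separately, using three ingredients: the bilinear bound \eqref{b4}, which gives $|B(u)|_H\le c|u|_V^{3/2}|Au|^{1/2}$ for $u\in D(A)$; the analytic semigroup estimate \eqref{analyticexp} applied to $\hat A$, namely $|\hat A^{\delta}e^{-t\hat A}|_{\mathcal L(H,H)}\le M_\delta t^{-\delta}e^{-\mu t}$; and the bilinearity identity $B(u)-B(v)=B(u-v,u)+B(v,u-v)$. Since $D(\hat A^{1/2})=V$ and $D(\hat A)=D(A)$ with equivalent norms (as $\mathbf C$ is a bounded perturbation of the positive self-adjoint operator $\nu A$), these estimates transfer between $A$ and $\hat A$ up to constants.

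For the $C(0,T;V)$ piece I would write $|\mathcal K(u)(t)|_V=|\hat A^{1/2}\mathcal K(u)(t)|$ and use \eqref{b4} to obtain
\[|\mathcal K(u)(t)|_V\le c\int_0^t(t-s)^{-1/2}e^{-\mu(t-s)}|u(s)|_V^{3/2}|Au(s)|^{1/2}\,ds.\]
Pulling out $\sup_s|u(s)|_V^{3/2}\le|u|_{Y_T}^{3/2}$ and applying H\"older with exponents $(4/3,4)$ to the remaining integrand yields
\[|\mathcal K(u)(t)|_V\le c|u|_{Y_T}^{3/2}\Bigl(\int_0^t(t-s)^{-2/3}\,ds\Bigr)^{3/4}\Bigl(\int_0^t|Au(s)|^2\,ds\Bigr)^{1/4}\le cT^{1/4}|u|_{Y_T}^2,\]
so $\sup_{t\le T}|\mathcal K(u)(t)|_V^2\le c\sqrt T\,|u|_{Y_T}^4$. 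For the $L^2(0,T;D(A))$ piece I would invoke maximal $L^2$-regularity of the analytic semigroup $e^{-t\hat A}$ on the Hilbert space $H$ (de Simon's theorem, which extends from $\nu A$ to $\hat A$ by treating $\mathbf C$ as a bounded perturbation), obtaining
\[\int_0^T|A\mathcal K(u)(t)|^2\,dt\le C\int_0^T|B(u(s))|_H^2\,ds\le c\int_0^T|u(s)|_V^3|Au(s)|\,ds,\]
and a final Cauchy--Schwarz in time gives $c\sqrt T\,|u|_{Y_T}^4$. Combining the two parts yields $|\mathcal K(u)|_{Y_T}^2\le c|u|_{Y_T}^4\sqrt T$; I note that the statement of the first displayed inequality of the lemma appears to contain a typographical error (the exponent on $|u|_{Y_T}$ on the right-hand side should be $4$, not $2$), as bilinearity of $B$ forces a quartic power, which is also consistent with setting $v=0$ in the second displayed inequality.

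For the Lipschitz estimate, I would use $B(u)-B(v)=B(u-v,u)+B(v,u-v)$ together with the asymmetric analogues of \eqref{b4}, for instance $|B(\phi,\psi)|_H\le c|\phi|_V|\psi|_V^{1/2}|A\psi|^{1/2}$, which follow from the same interpolation/Sobolev embedding reasoning that produces \eqref{b2}--\eqref{b5}. Repeating the two estimates above with one factor being $u-v$ and the other being $u$ or $v$ then produces the claimed bound. The main technical obstacle is the maximal regularity step for $\hat A$, since $\hat A$ is not self-adjoint; the resolution is that $\mathbf C$ is bounded with $|\mathbf Cv|_H\le c|v|$, so either one treats $\mathbf C$ as a bounded perturbation of the self-adjoint positive $\nu A$ (preserving the maximal regularity of $\nu A$ given by de Simon), or one transposes $\mathbf C\mathcal K(u)$ to the right-hand side and absorbs it via the same Cauchy--Schwarz plus Gronwall machinery used for the principal estimate.
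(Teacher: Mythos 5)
Your proof is correct and rests on the same two pillars as the paper's: the bound $|B(u)|_H\le c|u|_V^{3/2}|Au|^{1/2}$ from \eqref{b4} and the $L^2(0,T;H)\to Y_T$ regularity of the convolution $t\mapsto\int_0^tS(t-s)f(s)\,ds$. The only real difference is that the paper invokes the classical Lions result as a single black box delivering both components of the $Y_T$-norm at once, whereas you unpack half of it: you obtain the $C(0,T;V)$ component by hand from the smoothing estimate $|\hat A^{1/2}e^{-t\hat A}|\le Mt^{-1/2}$ plus H\"older with exponents $(4/3,4)$, and keep maximal $L^2$-regularity (de Simon) only for the $L^2(0,T;D(A))$ component; your remarks on transferring between $A$ and $\hat A$ via the bounded perturbation $\mathbf C$ are a point the paper glosses over. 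You are also right about the exponent: the paper's own computation ends with $|\mathcal K(u)|_{Y_T}^2\le c_3|u|_{Y_T}^4\sqrt T$, so the quadratic right-hand side in the displayed statement is indeed a typo and the quartic bound is what is actually proved and what is used downstream in the contraction argument.
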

\begin{proof}
    Recall the classical facts due to Lions \cite{MR0291887},
    \begin{itemize}
        \item for any $ f\in L^2(0,T;H)$, the function $t\mapsto  x(t)=\int^t_0 S(t-s) f(s)ds$ belongs to $Y_T$ and
        \item the map $ f\mapsto  x$ is continuous from $L^2(0,T;H)$ to $Y_T$.
    \end{itemize}
We remark that the second fact implies that $\int^t_0| f(s)|^2_Hds<\infty$.
Now because $B( u)\in L^2(0,T;H)$, that is $\int^t_0|B( u(s))|^2_Hds$, using the previous classical facts, combining with (\ref{b4}) one has,
\begin{align*}
    |\mathcal{K}( u)|^2_{Y_T}&\le c_1\int^T_0|B( u(s))|^2_Hds\\
    &\le c_2\int^T_0 | u|^2_V| u|_V| A u| dt\\
    &\le c_2\sup_{0\le t\le T}| u(t)|^2_V\int^T_0| u(t)|_V|A u(t)|dt\\
    &\le \frac{c_2}{2}\sup_{0\le t\le T}| u(t)|^2_V\left(\int^T_0| u(t)|^2_V+| A u(t)|^2dt\right)\\
    &\le c_3| u|^4_{Y_T}\sqrt{T}.
\end{align*}
Similarly, combining Lions' results and $(\ref{b4})$, one has
\begin{align*}
    \left|\mathcal{K}( u)-\mathcal{K}( v)\right|^2_{Y_T}&\le c_4\int^T_0|B( u- v, u)+B( v, u- v)|^2_H dt\\
    &\le c_5\int^T_0|B( u- v, u)|^2_H+|B( v, u- v)|^2_Hdt\\
    &\le c_5\int^T_0 c_7| u- v|^2_V| u|_V| A u|+c_8| u- v|^2_V| v|_V|A v|dt\\
    &\le c| u- v|^2_{Y_T}(| u|^2_{Y_T}+| v|^2_{Y_T})\sqrt{T}.
\end{align*}
\end{proof}

\begin{lem}\label{lemfh}
    Assume that $\alpha\ge 0$, $z\in L^4_{\text{loc}}([0,\infty);\mathbb{L}^4(\mathbb{S}^2)\cap H)$, $ f\in H$ and $ v_0\in V.$ Then, there exists unique solution of (\ref{mild}) in the space $C(0,T;V)\cap L^2(0,T;D(A))$ for all $T>0.$
\end{lem}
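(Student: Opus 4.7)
The plan is a Fujita--Kato type argument: solve the integral equation \eqref{mildv} by contraction in $Y_T$ for small $T>0$, and then extend to arbitrary $T$ via an energy estimate in $V$. Define
\[\Phi(v)(t)=S(t)v_{0}+\int_{0}^{t}S(t-s)\bigl[f+\alpha z(s)-B(v(s)+z(s))\bigr]\,ds.\]
Since $v_{0}\in V=D(A^{1/2})$ and $S(t)=e^{-t\widehat{A}}$ is analytic on $H$ (Proposition~\ref{analytics}), the term $S(\cdot)v_{0}$ lies in $Y_T$ by maximal regularity; since $f\in H$ and $z\in L^{2}_{\mathrm{loc}}(0,\infty;H)$, the forcing convolution also lies in $Y_T$ by the classical Lions regularity result used in the preceding lemma. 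What remains is to estimate $v\mapsto\int_{0}^{t}S(t-s)B(v+z)\,ds$ in $Y_T$. Expanding $B(v+z)=B(v)+B(v,z)+B(z,v)+B(z,z)$, the pure $B(v)$ term is already controlled by the preceding lemma. For the cross and pure-noise pieces I would combine \eqref{b5} with the Ladyzhenskaya inequality \eqref{ladyzhen} and the two-dimensional interpolation $v\in L^{4}(0,T;D(A^{3/4}))\hookrightarrow L^{4}(0,T;\mathbb{L}^{\infty})$ valid for any $v\in Y_T$. This yields a contraction estimate of the form
\[|\Phi(v_{1})-\Phi(v_{2})|_{Y_T}\le C_{z}\bigl(|v_{1}|_{Y_T}+|v_{2}|_{Y_T}+1\bigr)T^{\rho}\,|v_{1}-v_{2}|_{Y_T},\qquad\rho>0,\]
so that for sufficiently small $T>0$ the Banach fixed-point theorem produces a unique local solution in a closed ball of $Y_T$.

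To obtain a solution on arbitrary $[0,T]$ I would derive an \emph{a priori} bound in $Y_T$. Testing \eqref{ODE} with $Av$ and using the antisymmetry $(\mathbf{C}v,Av)=0$ of Lemma~\ref{corioinner} gives
\[\tfrac{1}{2}\tfrac{d}{dt}|v|_{V}^{2}+\nu|Av|^{2}=(f,Av)+\alpha(z,Av)-b(v+z,v+z,Av).\]
Applying Young's inequality to $(f,Av)$ and $\alpha(z,Av)$, estimate \eqref{b4} to the pure $b(v,v,Av)$ piece, and \eqref{b5} together with \eqref{ladyzhen} and the Sobolev embedding $D(A^{3/4})\hookrightarrow\mathbb{L}^{\infty}$ to the three mixed/pure-noise pieces, all occurrences of $|Av|^{2}$ can be absorbed into the dissipative left-hand side. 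The result is a differential inequality
\[\tfrac{d}{dt}|v|_{V}^{2}+\tfrac{\nu}{2}|Av|^{2}\le G(t)|v|_{V}^{2}+h(t),\]
with $G,h\in L^{1}_{\mathrm{loc}}(0,\infty)$ depending only on $|f|_H$ and $|z|_{\mathbb{L}^{4}}$. Gronwall's inequality then gives the required bound on $|v|_{C(0,T;V)}$ and $\|Av\|_{L^{2}(0,T;H)}$ for any $T$, so the local solution extends globally. Uniqueness follows by applying the chain rule of Lemma~\ref{lem1.2} to the difference of two candidate solutions and invoking Gronwall once more.

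The main technical obstacle is the treatment of the mixed and pure-noise trilinear terms $b(v,z,Av)$, $b(z,v,Av)$, and $b(z,z,Av)$. Direct use of \eqref{b5} asks for a test field in $\mathbb{L}^{4}$ or in $V$, but $Av$ only lives in $H$; since $z\notin V$, the simpler \eqref{b4} and \eqref{b1} estimates also fail on these pieces. The rescue is specific to dimension two: the Sobolev embedding $D(A^{\theta})\hookrightarrow\mathbb{L}^{\infty}$ on $\sph^{2}$ for $\theta>1/2$ allows one to transfer the $\mathbb{L}^{\infty}$ regularity onto $v$ via the interpolation $v\in L^{4}(0,T;D(A^{3/4}))$ that follows from $v\in C(0,T;V)\cap L^{2}(0,T;D(A))$. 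The price is a factor $|Av|^{1/2}$ that must be absorbed by $\nu|Av|^{2}$ through Young, which is exactly what forces the Gronwall coefficient $G(t)$ to be controlled by $|z|_{\mathbb{L}^{4}}^{4}$, and explains why $z\in L^{4}_{\mathrm{loc}}(\mathbb{L}^{4}\cap H)$ is the right integrability requirement. This careful bookkeeping is the main modification with respect to \cite{MR1207308}, where both the Coriolis term and the additive noise are absent.
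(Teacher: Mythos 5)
Your overall strategy (contraction in $Y_T$ for local existence, then an energy estimate tested against $Av$ plus Gronwall for global extension, then Gronwall again for uniqueness) is the same as the paper's, and the local step is fine. But there is a genuine gap in your global-extension argument. You claim that after testing with $Av$ and absorbing all occurrences of $|Av|^2$ you obtain $\tfrac{d}{dt}|v|_V^2+\tfrac{\nu}{2}|Av|^2\le G(t)|v|_V^2+h(t)$ with $G,h$ depending \emph{only} on $|f|_H$ and $|z|_{\mathbb{L}^4}$. That cannot be right for the pure nonlinear term: estimate \eqref{b4} gives $|b(v,v,Av)|\le C|v|^{1/2}|v|_V|Av|^{3/2}$, and Young with exponents $(4/3,4)$ turns this into $\epsilon|Av|^2+C(\epsilon)|v|^2|v|_V^4$, so the Gronwall coefficient necessarily contains $C(\epsilon)|v|^2|v|_V^2$ — it depends on the unknown solution. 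Gronwall only closes if you already know $\int_0^T|v(t)|^2|v(t)|_V^2\,dt<\infty$, and that is not available from the local theory on $[0,T_{f,z})$.

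The missing ingredient is the preliminary lower-order energy estimate: test \eqref{mode} with $v$ itself, where the worst term vanishes by $b(v,v,v)=0$ (see \eqref{b0}), and the remaining terms are handled by \eqref{b1} and Young. This yields $\sup_{t\le T}|v(t)|^2\le K_2$ and $\int_0^T|v(t)|_V^2\,dt\le K_1$ with constants depending only on the data, whence $\int_0^T|v|^2|v|_V^2\,dt\le K_2K_1<\infty$ and the $V$-level Gronwall argument closes. This two-tier structure is exactly what the paper does (its constants $K_1,K_2$ precede $K_3,K_4$), and it is the standard 2D Navier--Stokes mechanism; without it your differential inequality has an uncontrolled coefficient and the local solution cannot be extended. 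As a secondary remark, your device of putting the mixed terms $b(v,z,Av)$, $b(z,v,Av)$, $b(z,z,Av)$ through $D(A^{3/4})\hookrightarrow\mathbb{L}^\infty$ and the interpolation $Y_T\hookrightarrow L^4(0,T;D(A^{3/4}))$ is a legitimate alternative to the paper's estimates (which instead invoke $|z|_V$, a quantity not literally guaranteed by the hypothesis $z\in L^4_{\mathrm{loc}}(\mathbb{L}^4\cap H)$), and it has the merit of matching the stated integrability of $z$; but it does not repair the missing $H$-level estimate.
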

\begin{proof}
First let us prove local existence and uniqueness.
     Let $Y_{\tau}=C(0,\tau;V)\cap L^2(0,\tau;D(A))$ be equipped with the norm
    \begin{align*}      |f|^2_{Y_{\tau}}=\sup_{t\le\tau}|f(t)|^2+\int_{0}^{\tau}|Af(s)|^2\,ds,
    \end{align*}
And let $\Gamma$ be a nonlinear mapping in $Y_{\tau}$ as
\begin{align*}
    (\Gamma  v)(t)=S(t) v_0+\int^t_0 S(t-s)(f-B( v(s)+  z(s))+\alpha   z(s))ds.
\end{align*}
    Now recall the following classical result due to Lion:
    \begin{align*}
        \begin{cases}
        A1\quad    S(\cdot) v_0\in Y_{\tau},\,\,\forall\,\, v_0\in H, \tau>0; \\
    A2\quad    \mbox{The map $t\mapsto x(t)=\int_0^{t}S(t-s) f(s)ds$ belongs to $Y_{\tau}$ for all $L^2(0,\tau;H)$} ;   \\
    A3\quad    \mbox{The mapping $f\mapsto x$ is continuous from $L^2(0,\tau;H)$ to $Y_{\tau}$}.
        \end{cases}
    \end{align*}
    Note, our assumption $  z(t)\in L^4([0,\infty);L^4(\mathbb{S}^2)\cap H)$ %\footnote{which due to due to Prop8.4 in \cite{MR2584982}}
 implies that $z(t)\in Y_{\tau}$ as $  z(t)$ is square integrable and $V$ can be continuously embedded into $L^4(\mathbb{S}^2)$ .

    The first step is to show $\Gamma$ is well defined.
     Using  assumptions A1 and A2 and the assumption for $  z(t)$, together with Young inequality, one can show that
    \begin{align*}
        |\Gamma|^2_{Y_{\tau}}
&\le c\left|S(t) v_0\right|^2_{Y_{\tau}}
+c\left|\int^t_0S(t-s)B( v(s)+  z(s))ds\right|^2_{Y_{\tau}}+c\left|\int^t_0S(t-s) fds\right|^2_{Y_{\tau}}+c\alpha\left|\int^t_0 S(t-s)   z(s)\right|^2_{Y_{\tau}},
    \end{align*}
for some different constant $c$.
Now due to $A_1$ and $A_2$, the first and third terms are finite, due to $A_2$ and the trilinear inequality (\ref{b5}). The second term is finite. The last term also finite due to the assumption on $z(t)$ that 
\begin{align}
        |\Gamma|^2_{Y_{\tau}}&\le c_1+c_2| v|^4_{Y_{\tau}}+c_3+c_4.
\end{align}
Whence the map $\Gamma v$ is well defined in $Y_{\tau}$, and $\Gamma$ maps $Y_{\tau}$ into itself.

Now we have
    \begin{align*}
    &\    |\Gamma( v_1)-\Gamma( v_2)|^2_{Y_{\tau}}\\
    &\ \le |\int^{\tau}_0 S(t-s)(B( v_1(s)+  z(s))-B( v_2(s)+  z))ds|^2_{Y_{\tau}}\\
&\ \le c_6| v_1- v_2|^2_{Y_{\tau}}(| v_1+ z|^2_{Y_{\tau}}+| v_2+ z|^2_{Y_{\tau}})\sqrt{\tau},
    \end{align*}
for all $ v_1$, $ v_2$ and $  z$ in $Y_{\tau}.$ Therefore, for sufficiently small $\tau>0$, $\Gamma$ is a contraction in a closed ball of $Y_{\tau}$, yielding existence and uniqueness of a local solution of (\ref{mildv}) in $Y_{\tau}.$ That is, the solutions are bounded in $V$ on some short time interval $[0,\tau).$
\begin{rmk}
If the following map
        \begin{align*}
            (\Gamma  u)(t)=S(t) u_0-\int^t_0 S(t-s)B( u(s))ds+\int^t_0 S(t-s) fds+\int^t_0 S(t-s)GdL(s)
        \end{align*}
is used to prove contraction, then one would have to assume
    \begin{align*}
        \int^T_0|Az(t)|^2dt<\infty.
    \end{align*}    
\end{rmk}
\vspace{1cm}
The local existence and uniqueness results indicate that the solution can be extended up to the maximal lifetime $T_{f,z}$ and then is well defined on the right-open interval $[0,T_{f,z}).$ 
Next, we will prove the local solution may be continued to the global solution which is valid for all $t>0$, in the class of weak solutions satisfying a certain energy inequality. This is consistent with the results for the 2D NSEs that, a strong solution exists globally in time and is unique. See for instance Theorem 7.4 of Foias and Temam \cite{MR1855030}.
          
It suffices to find an uniform apriori estimate for the solution $v$ in the space $Y_{T_0}$ such that for any $T_0\in[0,T_{f,z})$:
%\begin{prop}
    \begin{align}\label{YT}
        | v|^2_{Y_{T_0}}\le C\quad\text{for all}\quad T_0\in [0,T_{f, z}),
    \end{align}
%\end{prop}
where $C$ is independent of $T_0.$
This uniform apriori estimate along with the local existence-uniqueness proved earlier, yields the unique global solution $u$ in $Y_{T,z}$. Moreover, this solution exists globally in time. 
Hence one can deduce that the solution is well defined up to the time $t=T_{f,z}$. At this point in time the iterated process could be repeated and the solution can be found on $[T_{f,z},2T_{f,z}]$ and so forth. Hence the solution can be found in $C(0,\infty;V)\cap L^2_{\text{loc}}(0,\infty;D(A)).$
%Moreover, since $T_{f, z_{\alpha}}$ is arbitrary and by the uniqueness over $[0,T_{f, z_{\alpha}}]$, we infer that there exists a unique global solution $v$ in $C(0,\infty;V)\cap L^2_{\text{loc}}(0,\infty;D(A)).$
To prove (\ref{YT}), we first need to show
\begin{align*}
    | v|_{X_{T_0}}\le c_0.
\end{align*}
Toward that end, we work with a modified version of (\ref{ODE})
\begin{align}\label{mode}
    \begin{cases}
        \partial_t  v+\nu A v=-B( v)-B( v, z)-B( z, v)-\mathbf{C} v+F, \\
     v(0)= v_0  .  
    \end{cases}
\end{align}
where $F=-B( z)+\alpha  z+ f$ is an element of $H$, since the $H$ norm of all of its three terms is bounded. Now multiplying both sides with $ v$, and integrating over $\mathbb{S}^2$, one obtains
\begin{align*}
    \partial_t| v|^2+\nu | v|^2_V&=-b( v, v, v)-b( v, z, v)-b( z, v, v)-(\mathbf{C} v, v)+\langle F, v\rangle\\
    &=b( v, v, z)+(F, v).
\end{align*}
Now by (\ref{b1}), one has
\begin{align*}
    |b( v, v, z)|&\le c| v|| v|_V| z|.\\
\intertext{Then applying Young inequality with $ab=\sqrt{\frac{\epsilon}{2}}| v|_V|v|\sqrt{\frac{2}{\epsilon}}| z|_V$ it follows that}
&\le \frac{\epsilon| v|^2_V}{4}+\frac{1}{\epsilon}| v|^2| z|^2_V.
\end{align*}
On the other hand,
\begin{align*}
    (F(t), v)=|F(t)|| v|\le\frac{1}{\epsilon}|F(t)|^2+\frac{\epsilon}{4}| v|^2.
\end{align*}
So that
\begin{align}\label{ptv2}
    \partial_t| v|^2+(2\nu-\frac{\epsilon}{2}) | v|^2_V&\le \frac{2}{\epsilon}| v|^2| z|^2_V+\frac{2}{\epsilon}|F(t)|^2+\frac{\epsilon}{2}| v|^2
\end{align}
for all $\epsilon>0$.

By integrating in $t$ from $0$ to $T$, after simplifying, one obtains
\begin{align}\label{integratedvh1norm}
    \int^T_0| v(t)|^2_V\le\frac{1}{2\nu-\frac{\epsilon}{2}}\left(| v(0)|^2+\frac{2}{\epsilon}\int^T_0| v(t)|^2| z(t)|^2_V dt+\frac{2}{\epsilon}\int^T_0|F(t)|^2dt+\frac{\epsilon}{2}\int^T_0| v(t)|^2 dt\right)\le K_1,
\end{align}
Since $ v(0)= u_0$, 
\begin{align*}
    K_1=K_1( u_0,F,\nu,T, z).
\end{align*}
On the other hand, by integrating (\ref{ptv2}) in $t$
from $0$ to $s$,  $0<s<T$, we obtain
\begin{align*}
    | v(s)|^2\le | u_0|^2+\frac{2}{\epsilon}\int^s_0| v(t)|^2| z(t)|^2_V dt+\frac{2}{\epsilon}\int^s_0|F(t)|^2dt+\frac{\epsilon}{2}\int^s_0|v(t)|^2dt,
\end{align*}

\begin{align*}   \sup_{s\in[0,T_{f,z}]}|v(s)|^2\le K_2,
\end{align*}
\begin{align*}
    K_2=K_2( u_0, F,\nu,T, z)=(2\nu-\frac{\epsilon}{2})K_1.
\end{align*}
 Hence, for any $\epsilon$ such that $\frac{\epsilon}{2}<2\nu,$ applying the Gronwall lemma to
\begin{align*}
    \partial_t| v|^2\le \left(\frac{2}{\epsilon}| z|^2_V+\frac{\epsilon}{2}\right)| v|^2+\frac{2}{\epsilon}|F(t)|^2,
\end{align*} one obtains
\begin{align*}   |v(t)|^2&\le|v(0)|^2\exp\left(\int^{t}_0\frac{2}{\epsilon}| z(\tau)|^2_V+\frac{\epsilon}{2}d\tau\right)| v|^2+\int_0^t\frac{2}{\epsilon}|F(s)|^2\exp(\int^t_s\left(\frac{2}{\epsilon}| z(\tau)|^2_V+\frac{\epsilon}{2}\right)d\tau)ds.
\end{align*}
And so
\begin{align*}
    \sup_{t\in [0,T_{f,z}]}| v(t)|^2\le | v(0)|^2\exp\left(\int^{T_{f,z}}_0\frac{2}{\epsilon}| z(\tau)|^2_V+\frac{\epsilon}{2}d\tau\right)+\int_0^{T_{f,z}}\frac{2}{\epsilon}|F(s)|^2\exp(\int^{T_{f,z}}_s\left(\frac{2}{\epsilon}| z(\tau)|^2_V+\frac{\epsilon}{2}\right)d\tau)ds.
\end{align*}
To avoid clumsiness, we write momentarily $T_{f,z}=T$. Let
\begin{align}
    \psi_{T}( z)=\exp\left(\int^T_0\frac{2}{\epsilon}| z(\tau)|^2_V+\frac{\epsilon}{2}d\tau\right)<\infty,\quad c_F=\int^{T}_0\frac{2}{\epsilon}|F(s)|^2\exp\left(\int^{T}_s\left(\frac{2}{\epsilon}| z(\tau)|^2_V+\frac{\epsilon}{2}\right)d\tau\right)ds.
\end{align}
So
\begin{align}
    \sup_{t\in[0,T]}| v(t)|^2\le | v(0)|^2\psi_T( z)+c_F,
\end{align}
which implies
\begin{align}\label{linfx}
     v\in L^{\infty}([0,T];H).
\end{align}
Now integrating
\begin{align} \partial_t| v|^2+\nu| v|^2_V\le\left(\frac{2}{\epsilon}| z|^2_V+\frac{\epsilon}{2}\right)| v|^2+\frac{2}{\epsilon}|F(t)|^2,
\end{align}
from 0 to $T$, one gets
\begin{align}\label{aprioriestV}
    | v(T)|^2+\nu\int^T_0| v(t)|^2_Vdt\le \left(\psi_T(z)| v(0)|^2+c_F\right)\int^T_0\left(\frac{2}{\epsilon}| z(t)|^2+\frac{\epsilon}{2}\right)dt+\frac{2}{\epsilon}\int^T_0|F(t)|^2dt+| v(0)|^2.
\end{align}
Which implies
\begin{align}\label{l2x}
     v\in L^2([0,T];V),
\end{align}
and $ v$ is indeed a weak solution. To show that $ v\in C([0,T];H)$, note that $ A: V\to V'$ is bounded and $ A v\in L^2([0,T];V')$. Then $F\in L^2([0,T];V')$ since $ z\in L^4([0,T];L^4(\mathbb{S}^2)\cap H)$ which can be continuously embedded into $V'$, and the terms $B( z),\,B( v, z),\,B( z, v)$ are all in $L^2([0,T];V')$. Combining these facts along with (\ref{l2x}) and invoking lemma 4.1 of \cite{MR3306386}, we conclude that $ v\in C([0,T];H)$.

The uniform apriori estimate (\ref{aprioriestV}) implies that the solution is well defined up to time $t=T_{f,z}$. The iterative process may be repeated starting from $t=T_{f,z}$ with the initial condition $z(t)$. The solution is uniquely extended to $[0,2T_{f,z}]$ and so on to an arbitrary large time.

Now, multiplying both sides of (\ref{mode}) with $Av$, and noting again the classical fact that $\frac{1}{2}\partial_t| v(t)|^2=(\partial_t  v(t), v(t))$ and $( C v, A v)=0$, integrating over $\mathbb{S}^2$, one obtains:
\begin{align*}
(\partial_t  v, A v)+\nu( A v, A v)=-b( v, v, A v)-b( v, z, A v)-b( z, v, A v)+\langle F(t), A v(t)\rangle    
\end{align*}
\begin{align}
    \Longrightarrow\,\frac{1}{2}\frac{d^+}{dt}| v|^2+\nu| A v|^2=-b( v(t), v(t),A v(t))-b( v(t), z(t), A v(t))-b( z(t), v(t), A v(t))+\langle F(t), A v(t)\rangle.
\end{align}
Now,
\begin{align*}
|b( v, v, A v)|\le C| v|^{\frac{1}{2}}| v|_V| A v|^{\frac{3}{2}}\quad\forall\, v\in V,  v\in D(A),
\end{align*}
\begin{align*}
|b( v, z, A v)|\le C| v|^{\frac{1}{2}}| v|^{\frac{1}{2}}_V| z|^{\frac{1}{2}}_V| A v|^{\frac{3}{2}}\quad\forall\, v\in V,  v\in D( A),
\end{align*}
\begin{align*}
|b( z, v, A v)|\le C| z|^{\frac{1}{2}}| z|^{\frac{1}{2}}| v|^{\frac{1}{2}}_V| A v|^{\frac{3}{2}}\quad\forall\, z\in V,  v\in D(A).
\end{align*}
Also,
\begin{align*}
(F(t),Av)&\le\frac{\epsilon}{4}|A v(t)|^2+\frac{1}{\epsilon}|F(t)|^2 .   
\end{align*}
Furthermore, using Young inequality with the choice $p=\frac{4}{3}$ and $ab=(\epsilon p)^{\frac{1}{p}}|Av|^{3/2}\epsilon p)^{-\frac{1}{p}}|v|^{1/2}|v|_V$, the above estimates of the three bilinear terms become:
\begin{align*}
|b( v, v, A v)|&\le C| v|^{\frac{1}{2}}| v|_V| A v|^{\frac{3}{2}}\\
&\le\epsilon| A v|^2+C(\epsilon)| v|^2| v|^4_V,    
\end{align*}
\begin{align*}
|b( v, z, A v)|&\le C| v|^{\frac{1}{2}}| v|^{\frac{1}{2}}_V| z|^{\frac{1}{2}}_V| A v|^{\frac{3}{2}}\\
&\le\epsilon| A v|^2+C(\epsilon)| v|^2| v|^2_V| z|^2_V,
\end{align*}
\begin{align*}
    |b( z, v, A v)|&\le C| z|^{\frac{1}{2}}| z|^{\frac{1}{2}}_V| v|^{\frac{1}{2}}_V| A v|^{\frac{3}{2}}\\
    &\le \epsilon | A v|^2+C(\epsilon)| z|^2| z|^2_V| v|^2_V.
\end{align*}
Therefore,
\begin{align}\label{energyvda}
    \frac{d^+}{dt}| v|^2_V+(2\nu-3\epsilon-\frac{\epsilon}{4})| A v|^2&\le C(\varepsilon)(| v|^2| v|^4_V+| v|^2| v|^2_V| z|^2_V+| z|^2| z|^2_V| v|^2_V)+\frac{1}{\epsilon}|F(t)|^2.
\end{align}
 Momentarily dropping the term
  $| A v(t)|^2$, we have the differential inequality
\begin{align*}
     y'&\le a+\theta  y,\\
 y(t)&=| v|^2_V,\quad a(t)=\frac{1}{\nu}|F(t)|^2,\theta(t)=C(\epsilon)(| v|^2| v|^2_V+| v|^2| z|^2_V+| z|^2| z|^2_V).
\end{align*}
Then for any $\epsilon$ such that $\epsilon<\frac{8}{13}\nu$, using the Gronwall lemma , one has
\begin{align*}
    \frac{d^+}{dt}\left( y(t)e^{-\int^t_0\theta(\tau)d\tau}\right)\le a(t)e^{-\int^t_0\theta(\tau)d\tau}ds
\end{align*}
\begin{align*}
    | v(t)|^2_V&\le| v(0)|^2_V\exp\left(\int^t_0C(\epsilon)(| v(\tau)|^2| v(\tau)|^2_V+| v(\tau)|^2| z(\tau)|^2_V+| z(\tau)|^2| z(\tau)|^2_V)d\tau\right)\\   &+\frac{1}{\nu}\int^t_0|F(s)|^2\exp\left(\int^t_sC(\epsilon)(| v(\tau)|^2| v(\tau)|^2_V+| v(\tau)|^2| z(\tau)|^2_V+| z(\tau)|^2| z(\tau)|^2_V)d\tau\right) ds
\end{align*}

\begin{align}\label{apriori1}
    \sup_{t\in[0,T]}| v(t)|^2_V\le K_3,
\end{align}

\begin{align*}
    K_3=K_3( u_0, F,\nu,T, z)=\left(|v(0)|^2_V+\frac{1}{\nu}\int^T_0|F(s)|^2ds\right)\exp(C(\epsilon)K_2 K_1),
\end{align*}

which implies
\begin{align}\label{linfy}
     v\in L^{\infty}(0,T;V).
\end{align}
Let us now come back to (\ref{energyvda}), which we integrate from $0$ to $T$. After simplifying, we have
\begin{align*}
    \int^T_0|Av(t)|^2 dt\le K_4,
\end{align*}
and
\begin{align*}
    K_4&=K_4( u_0, F,\nu, z,T)\\
&=\frac{1}{2\nu-3\epsilon-\frac{\epsilon}{4}}(| u_0|^2+C(\epsilon)\sup_{t\in[0,T]}|v(t)|^2|v(t)|^4_V+C(\epsilon)\sup_{t\in[0,T]}|v(t)|^2|v(t)|^2_V|z(t)|^2_V\\&+C(\epsilon)\sup_{t\in[0,T]}|z(t)|^2|z(t)|^2_V|v(t)|^2_V +\frac{1}{\epsilon}\int^T_0| F(t)|^2)dt.
\end{align*}
As
\begin{align*}
    \sup_{t\in [0,T]}|v(t)|^2&\le K_2,\\
    \sup_{t\in [0,T]}|v(t)|^4_V&\le K^2_3,\\
    | z(t)|^2_V&\le C_1,\\
    \sup_{t\in[0,T]}| z(t)|^2\le C_2.
\end{align*}
So,
\begin{align*}
K_4=\frac{1}{2\nu-3\epsilon-\frac{\epsilon}{4}}(| u_0|^2+C(\epsilon)K_2K^2_3+C(\epsilon)K_2 K_3 C_1+C(\epsilon)C_2 C_1 K_3 +\frac{1}{\epsilon}\int^T_0| F(t)|^2)dt.
\end{align*}
This implies
\begin{align}
     v\in L^2(0,T_{f,z};D(A)).
\end{align}
It remains to show that $ v\in C([0,T];V).$ Note, the fact that the solution with $ v_0\in V$ is in $L^2([0,T];V)$ implies that a.e. on $[0,T]$, $ v(t)\in V.$ Moreover, since $ v(t)\in C([0,T];H)$ as previously deduced, and is unique as proved in step 1, it follows that $ u\in C([0,T];V)$.

Together with the uniform apriori estimate, the local existence-uniqueness shown in step 1,  allows us to conclude that there exists a unique $ u\in C(0,\infty;H)\cap L^2(0,\infty;V)\subset C(0,\infty;V)\cap L^2(0,\infty;D( A)),$
for any given $ u_0\in V$, $ f\in H$, $ z(t)\in L^4_{\text{loc}}([0,\infty);\mathbb{L}^4(\mathbb{S}^2)\cap H)$. Moreover, our promising apriori bound (\ref{apriori1}) yields $T=\infty.$
\end{proof}
\subsection{Existence and uniqueness of a strong solution with $v_0\in H$}\label{ssec:strongsoln}
\begin{cor}
    If $ f\in H$, $ v_0\in H$, $ z(t)\in L^4_{\text{loc}}([0,\infty);\mathbb{L}^4(\mathbb{S}^2)\cap H)$, then $ v(t)\in V$ for all $t>0.$
\end{cor}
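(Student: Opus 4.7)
The plan is to exploit the parabolic smoothing effect: although $v_0$ is only in $H$, the $L^2(0,T;V)$ regularity already established for weak solutions forces $v(t)\in V$ at arbitrarily small times, and from any such time we can restart the argument of Lemma \ref{lemfh} to propagate $V$-regularity.

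First I would invoke Theorem \ref{t3} (equivalently, the weak-solution part of Lemma \ref{lemfh}) applied to the initial data $v_0\in H$. This yields a solution $v$ of \eqref{mildv} belonging to $X_T = C([0,T];H)\cap L^2(0,T;V)$ for every $T>0$. In particular, $\int_0^T |v(t)|_V^2\,dt<\infty$, so the set $\{t\in(0,T): v(t)\notin V\}$ has Lebesgue measure zero. Consequently, for any $\varepsilon>0$ there exists $t_\varepsilon\in(0,\varepsilon)$ with $v(t_\varepsilon)\in V$.

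Next I would use $v(t_\varepsilon)\in V$ as new initial data at time $t_\varepsilon$ and apply Lemma \ref{lemfh} on the shifted interval $[t_\varepsilon,T]$, with the shifted driving data $\tilde z(s):=z(t_\varepsilon+s)$ and forcing $f$ (both of which retain the required regularity since $z\in L^4_{\mathrm{loc}}([0,\infty);\mathbb L^4(\mathbb S^2)\cap H)$ and $f\in H$). This produces a solution $\tilde v \in C([t_\varepsilon,T];V)\cap L^2(t_\varepsilon,T;D(A))$ of \eqref{mildv} with $\tilde v(t_\varepsilon)=v(t_\varepsilon)$. By the uniqueness part of Theorem \ref{t3} in the class $X_{T-t_\varepsilon}$ (which strong solutions certainly belong to), $\tilde v$ coincides with $v$ on $[t_\varepsilon,T]$. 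Hence $v\in C([t_\varepsilon,T];V)$, and in particular $v(t)\in V$ for every $t\in[t_\varepsilon,T]$.

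Finally, since $\varepsilon>0$ is arbitrary and $t_\varepsilon<\varepsilon$, every $t>0$ lies in such an interval $[t_\varepsilon,T]$ (choose $\varepsilon<t$ and $T>t$), so $v(t)\in V$ for all $t>0$, as required. The only delicate point is ensuring that the restart is legitimate, i.e.\ that the solution obtained on $[t_\varepsilon,T]$ via Lemma \ref{lemfh} really agrees with the original weak solution on that interval; this is handled by the uniqueness statement in $X_T$ already established in Theorem \ref{t3}, so no new technical work is needed.
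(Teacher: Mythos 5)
Your bootstrap --- using the $L^2(0,T;V)$ bound to find $t_\varepsilon\in(0,\varepsilon)$ with $v(t_\varepsilon)\in V$, restarting the argument of Lemma \ref{lemfh} from $t_\varepsilon$ with shifted data, and gluing the restarted solution to the original one by uniqueness --- is exactly the mechanism with which the paper concludes its proof of this corollary, so the regularity part of your argument is on target.

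There is, however, a circularity in how you set it up. You obtain the weak solution for data $v_0\in H$, and the uniqueness used in the gluing step, by citing Theorem \ref{t3}; but the $v_0\in H$ case of Theorem \ref{t3} is precisely what this section of the paper (culminating in this corollary and Lemma \ref{lemfv}) is in the process of establishing. The only result actually available at this point, Lemma \ref{lemfh}, requires $v_0\in V$, and its uniqueness comes from a contraction in $Y_\tau$, i.e.\ uniqueness only within the class of strong solutions --- not enough to identify your restarted strong solution $\tilde v$ on $[t_\varepsilon,T]$ with the original weak solution $v$, which a priori is only in $C([t_\varepsilon,T];H)\cap L^2(t_\varepsilon,T;V)$. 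The paper fills both gaps explicitly: it constructs the solution for $v_0\in H$ by approximating $v_0$ by $v_{0,n}\in V$, showing the corresponding solutions are Cauchy in $C(0,T;H)$ via the Gronwall estimate \eqref{Cauchy} on $v_{n,m}=v_n-v_m$, and passing to the limit in the mild equation \eqref{mildv}; and it proves uniqueness of weak solutions by the same energy argument applied to $w=v_1-v_2$, using $|b(w,w,\cdot)|\le C|w|\,|w|_V|\cdot|_V$ together with $\int_0^T\left(|z|_V^2+|v_1|_V^2\right)dt<\infty$. To make your proof self-contained you must at least supply this weak-uniqueness (or weak--strong uniqueness) argument --- it is short and is needed for your gluing step in any case --- and either include the approximation construction or state explicitly that you are proving only the regularity assertion for an already-given weak solution.
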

We follow the proof in \cite{MR1207308}.
The idea stems from the standard approximation method commonly used in PDE theory. In view of the apriori estimate (\ref{energyvda}) one takes an approximated solution to (\ref{mild}) in $Y_{T}$. Then one shows the approximates converge. Finally one shows that the limit function indeed satisfies (\ref{mild}).

Let $( v_{0,n})\subset V$ be a sequence converging to $ v_0$ in $H.$ For all $n\in\N$, let $ v_n$ be a solution of equation (\ref{mild}) in $Y_{T}$ corresponding to the intial data $ v_{0,n}.$ Similar to the case when $ v_0\in V$, one can find a constant such that $| v_n|_{X_{T}}\le c,\quad\forall\,n\in\N.$ Following the same lines as in the proofs of (\ref{linfx}) and (\ref{l2x}), $ v_n$ can be proved to be a weak solution.

Moreover, for $n,m\in\N$, take $ v_{n,m}= v_n- v_m$ with $ v^0_{n,m}= v^0_n- v^0_m.$ Then $v_{n,m}$ is the solution of
\begin{align}\label{vnm}
    \begin{cases}
        \partial_t  v_{n,m}+\nu A  v_{n,m}=-B( v_{n,m},z)-B( z, v_{n,m})-B( v_{n,m}, v_n)-B( v_m, v_{n,m})-\mathbf{C}v_{n,m},\\
         v_{n,m}(0)= v^0_n- v^0_m.
    \end{cases}
\end{align}
Multiplying both sides of (\ref{vnm}) with $ v_{n,m}$ and integrating against $ v_{n,m}$, using Lemma \ref{lem1.2} and (\ref{b0}) and noting (\ref{corio}), one obtains
 \begin{align}\label{Cauchy}     \partial_t| v_{n,m}|^2+2\nu| v_{n,m}|^2_V&=-2b( v_{n,m}, z, v_{n,m})-2b( v_{n,m}, v_n,v_{n,m}),\notag\\
\intertext{Since $|b( w, w, z)|\le C| w|| w|_V| z|_V$ and $|b( w, w, v)|\le C| w|| w|_V| v|_V$}
&\le C| v_{n,m}|| v_{n,m}|_V(| z|_V+| v_n|_V)\notag\\
\intertext{Then using the Young inequality with $a=\epsilon| v_{n,m}|_V$ and $b=\frac{C}{\sqrt{\epsilon}}| v_{n,m}|(|z|_V+| v_n|_V),$}
&\le \frac{\epsilon| v_{n,m}|_V}{2}+\frac{C}{2\epsilon}| v_{n,m}|^2(|z|^2_V+| v_n|^2_V).
 \end{align}
Therefore, for any $\epsilon>0$ such that $\frac{\epsilon}{2}<2\nu$, one applies the Gronwall lemma to obtain
\begin{align*}
    \partial_t| v_{n,m}|^2\le \frac{C}{2\epsilon}(| z|^2_V+| v_n|^2_V)| v_{n,m}|^2.
\end{align*}
Combining this with $ v^0_{n,m}= v^0_n- v^0_m$, it is easy to show that
\begin{align*}
    | v_{n,m}(t)|^2\le | v_{n,m}(0)|^2\exp\left(\frac{C}{2\epsilon}(\int_0^{T}| z(t)|^2_V+| v_n(t)|^2_V)| v_{n,m}(t)|^2dt\right)<\infty,
\end{align*}
as $\int^{T}_0| z(t)|^2_V+| v_n(t)|^2_V<\infty$.
Hence $ v_{n,m}$ converges in $T$; and is therefore Cauchy in $T$. That is, for any $\epsilon>0$, $\exists\,N\in\mathbb{N}$ such that $| v_n- v_m|<\epsilon$ whenever $n,m\ge N.$

Let the limit of $ v_n$ be $ v.$
It remains to show $v$ indeed satisfies (\ref{mild}).

Let $v_n$ be the solution to
\begin{align}
     v_n(t)=S(t) v_{0,n}-\int^t_0 S(t-s)(B( u_n(s)))ds+\alpha\int^t_0 z_n(s)ds,
\end{align}
where $ z_n=\int^t_0S(t-s)GdL_n(t).$
We would like to show that
\begin{align}
    \lim_{n\to\infty} u_n(t)=S(t) u_0-\int^t_0 S(t-s)(B( u(s)))ds+\int^t_0S(t-s)fds+\alpha\int^t_0 z(s)ds.
\end{align}
Assume $ f_n\to  f$ in $L^2(0,T;H)$ $ , z_n=\int^t_0S(t-s)GdL_n(t)\to z$ in $L^4([0,T];L^4(\mathbb{S}^2)\cap H),$ we would like to check if
\begin{align}
    \lim_{n\to\infty}B( u_n)=B( u)\quad\text{in}\quad H.
\end{align}
For this, note first that
\begin{align*}
    \left|| u_n|^2_V-| u|^2_V\right|=&\ \left|( u_n, u_n)-( u, u)\right|\\
     =&\ \left|( u_n, u_n)_V-( u, u_n)_V+( u, u_n)_V-( u, u)_V\right|\\
    = &\ \left|( u_n, u_n)_V-( u, u_n)_V\right|+\left|( u, u_n)_V-( u, u)_V\right|\\
    \le &\ | u_n- u|_V| u_n|_V+| u|_V| u_n- u|_V    .
\end{align*}
Now $| u_n|_V$ is Cauchy and is therefore bounded. So $ u_n$ converges to $ u$ in $V$ as $n\to\infty.$
Then using (\ref{b1}) one deduces that
\begin{align*}
&\    |B( u_n)-B( u)|\\
= &\ |B( u_n, u_n)-B( u_n, u)+B( u_n, u)-B( u, u)|\le  C(| u_n|^2_V+| u_n|^2_V| u|+| u|^2_V)\to C| u|^2_V.
\end{align*}
Now analogous to the earlier proof of contraction we have,
\begin{align*}
&\    |B( u_n(s))-B( u(s))|^2_{Y_{T}}\\
\le &\ \left|\int^{t}_0S(t-s)(B( u_n(s))-B( u(s)))ds\right|^2_{Y_{T}}\\
\le &\ c\int^{T}_0|B( u_n(s))-B( u(s))|^2ds\\
\le &\ c| u|^2_{T}\sqrt{T}.
\end{align*}
Therefore, $B( u_n)-B( u)$ is in $L^2(0,T;H).$ Now by the continuity argument again, one has
\begin{align*} \lim_{n\to\infty}\int^{T}_0S(t-s)B( u_n(s))ds=\int^{T}_0S(t-s)B( u(s))ds,
\end{align*}
and
\begin{align*}
  \lim_{n\to\infty}\int^{T}_0S(t-s) f_n(s)ds=\int^{T}_0S(t-s) f(s)ds.
\end{align*}
Combining the above with the assumptions that
\begin{align*}
    \lim_{n\to\infty}S(t) u_{0,n}&=S(t) u_0,\\
    \lim_{n\to\infty} z_n(t)&= z(t),
\end{align*}
one deduces that
\begin{align*}
    \lim_{n\to\infty} v_n(t)= v(t),
\end{align*}
and there exists a solution to (\ref{mild}). However, the solution constructed as the limits of $ u_n$ leaves open the possibility that there are more than one limit. So we will now prove $u$ is unique. The idea is analogous to proving (\ref{Cauchy}). Nevertheless we detail as follows. Suppose $ v_1$, $ v_2$ are two solutions of (\ref{ODE}) with the same initial condition. Let $ w= v_1- v_2$, then $ w$ satisfies
\begin{align}\label{w}
    \begin{cases}
        \partial_t  w+\nu A  w=-B( w, z)-B(z, w)-B( w, v_1)-B( v_2, w),\\
         w(0)=0.
    \end{cases}
\end{align}
Multiplying (\ref{w}) on both sides with $w$ and integrating against $w$, using the identities $\partial_t| v(t)|^2=2\langle \partial_t  v(t), v(t)\rangle$ again in Temam
and (\ref{b0}), one gets
 \begin{align}\label{unique}     \partial_t| w|^2+2\nu| w|^2_V&=-2b( w, z, w)-2b( w, v_1, w),\notag\\
\intertext{Since $|b( w, w, z)|\le C| w|| w|_V| z|_V$ and $|b( w, w, v)|\le C| w|| w|_V| v|_V$}
&\le C| w|| w|_V(| z|_V+| v_1|_V).\notag\\
\intertext{Then via usual Young inequality with $a=\sqrt{\epsilon}| w|_V$ and $b=\frac{C}{\sqrt{\epsilon}}| w|(| z|_V+| v_n|_V)$}
&\le \frac{\epsilon| w|_V}{2}+\frac{C}{2\epsilon}| w|^2(| z|^2_V+| v_1|^2_V).
 \end{align}
Therefore, for any $\epsilon>0$ such that $\frac{\epsilon}{2}<2\nu$, one applies the Gronwall lemma to
\begin{align*}
    \partial_t| w|^2\le \frac{C}{2\epsilon}(| z|^2_V+| v_1|^2_V)| w|^2,
\end{align*}
and combining with $ w_0= v_{1,0}- v_{2,0}=0$, it follows from the Gronwall lemma that
\begin{align*}
    | w(t)|^2\le | w(0)|^2\exp\left(\frac{C}{2\epsilon}(\int_0^{T}| z(t)|^2_V+| v_1(t)|^2_V)| w(t)|^2dt\right)<\infty
\end{align*}
as $\int^{T}_0| z(t)|^2_V+| v_1(t)|^2dt<\infty.$
Now, since $ w(0)=0$, necessarily $ w(t)$ must be 0.

It remains to show $ v\in C((0,T;V)$, as observed from the above energy inequality (\ref{Cauchy}). The solution starts with an initial condition $ v_0\in H$ belonging to
$L^2(0,T;V)$. This implies that almost everywhere in $(0,T]$, there must exist a time point $\epsilon$ (and $\epsilon<T$) such that $u(\epsilon)\in V$. Then one may repeat step two onto another interval $[\epsilon, 2\epsilon],[2\epsilon, 3\epsilon],$ and soon over the whole $[\epsilon,\infty].$  Finally we obtain that $u\in C([\epsilon,T];V)\cap L^2([\epsilon,T];D(A))$ for all $\epsilon>0$. Note that $T=\infty$ as implied from the apriori estimate. %(Shouldn't that depends on forcing and noise?)
\\
   
In summary, in this section, we have proved:
\begin{lem}\label{lemfv}
    Assuming that $\alpha\ge 0$, $ z\in L^4_{\text{loc}}([0,\infty);\mathbb{L}^4(\mathbb{S}^2)\cap H)$, $ f\in H$ and $ v_0\in H.$ Then, there exists an unique solution of (\ref{mildv}) in the space $C(0,T;H)\cap L^2(0,T;V),$ which belongs to $C(\epsilon,T; V)\cap L^2_{\text{loc}}(\epsilon,T;D( A))$ for all $\epsilon>0$ and $T>0.$
\end{lem}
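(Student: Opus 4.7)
The plan is to promote the result of Lemma~\ref{lemfh} (which requires $v_0\in V$) to merely $v_0\in H$ by an approximation-plus-regularization argument, exactly in the spirit of the classical 2D Navier--Stokes theory of Temam. First I pick a sequence $(v_{0,n})\subset V$ with $v_{0,n}\to v_0$ in $H$. By Lemma~\ref{lemfh}, for each $n$ there is a unique solution $v_n\in Y_T=C(0,T;V)\cap L^2(0,T;D(A))$ of \eqref{mildv} with initial datum $v_{0,n}$. The a priori bounds \eqref{integratedvh1norm}--\eqref{aprioriestV}, which only used the $H$-norm of the initial datum (and the $V$-norm of $z$ and the $H$-norm of $f$), already give the uniform control
\[\sup_n\Bigl(\sup_{t\le T}|v_n(t)|^2+\int_0^T|v_n(t)|_V^2\,dt\Bigr)<\infty.\]

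Next I show that $(v_n)$ is Cauchy in $X_T=C(0,T;H)\cap L^2(0,T;V)$. Writing $v_{n,m}=v_n-v_m$, the difference satisfies \eqref{vnm}, and testing with $v_{n,m}$ kills both $b(v_{n,m},v_n,v_{n,m})$ contributions except the ``transport by $z$'' and ``transport of $v_n$'' pieces; using $|b(w,\psi,w)|\le C|w||w|_V(|z|_V+|v_n|_V)$ and absorbing $\tfrac{\varepsilon}{2}|v_{n,m}|_V^2$ into the viscous term gives an inequality of the form
\[\partial_t|v_{n,m}|^2+\nu|v_{n,m}|_V^2\le \frac{C}{\varepsilon}\bigl(|z|_V^2+|v_n|_V^2\bigr)|v_{n,m}|^2.\]
Since $\int_0^T(|z|_V^2+|v_n|_V^2)\,dt$ is uniformly bounded, Gronwall yields $|v_{n,m}(t)|^2\le |v_{0,n}-v_{0,m}|^2\,e^{KT}$, so $(v_n)$ converges in $X_T$ to some limit $v$. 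Passing to the limit in the mild formulation \eqref{mildv} uses the strong convergence of $v_n\to v$ in $L^2(0,T;V)$ and Lemma~\ref{uniquext} to identify the nonlinear term, and the continuity $v\in C([0,T];H)$ follows from $Av\in L^2(0,T;V')$ together with Lemma~\ref{lem1.2}. Uniqueness for $v_0\in H$ is obtained by exactly the same Gronwall argument applied to $w=v_1-v_2$, which vanishes identically since $w(0)=0$.

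Finally, I bootstrap regularity away from $t=0$. Because $v\in L^2(0,T;V)$, for every $\varepsilon>0$ there exists $t_\varepsilon\in(0,\varepsilon)$ with $v(t_\varepsilon)\in V$. Restarting \eqref{mildv} from the deterministic initial time $t_\varepsilon$ with initial datum $v(t_\varepsilon)\in V$ and with $z$ shifted accordingly (still in $L^4_{\mathrm{loc}}$), Lemma~\ref{lemfh} produces a solution in $C([t_\varepsilon,T];V)\cap L^2(t_\varepsilon,T;D(A))$; uniqueness in $X_T$, already established, forces this solution to coincide with $v$ on $[t_\varepsilon,T]$. Letting $\varepsilon\downarrow 0$ yields $v\in C(\varepsilon,T;V)\cap L^2_{\mathrm{loc}}(\varepsilon,T;D(A))$ for every $\varepsilon>0$, completing the lemma.

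The main obstacle is handling the borderline integrability of the $z$-driven transport term $B(z,v)$ and $B(v,z)$: one must keep track of the constants carefully so that the Cauchy estimate in $X_T$ closes without requiring additional regularity on $z$ beyond $z\in L^4_{\mathrm{loc}}(\mathbb L^4\cap H)$, and the Gronwall exponent must be finite uniformly in $n$. Everything else is routine provided the uniform bounds from Lemma~\ref{lemfh} are exploited and the Lions-type facts A1--A3 are invoked for the semigroup convolution.
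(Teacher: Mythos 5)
Your proposal is correct and follows essentially the same route as the paper: approximate $v_0\in H$ by $(v_{0,n})\subset V$, invoke the $V$-data result (Lemma \ref{lemfh}), derive uniform $X_T$ bounds, prove the sequence is Cauchy via the difference equation \eqref{vnm} with the Gronwall argument, pass to the limit in the mild formulation, prove uniqueness by the same energy--Gronwall estimate on $w=v_1-v_2$, and bootstrap $V$-regularity for $t>0$ by restarting from a time $t_\varepsilon$ where $v(t_\varepsilon)\in V$. Your version is, if anything, slightly cleaner in retaining the viscous term to get Cauchyness in $L^2(0,T;V)$ as well as in $C(0,T;H)$.
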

\vspace{1cm}
Combining Lemma \ref{lemfv} with \ref{lemfh}, we have proved theorem \ref{t3}.
\begin{rmk}
    Continuous dependence on $ v_0$, $ z$ and $ f$ is implied from the point where local existence and uniqueness is attained and hence holds also for global solutions.
\end{rmk}
%Finally, we give an intuitive meaning of Theorem \ref{t3}.
\begin{rmk}
    The proof of Theorem \ref{t3} shows that the solution $v$, starting from $v_0\in H$, belongs to $V$ for a.e. $t\ge t_0$. If we take any $\bar{t}\ge t_0$ such that $v(\bar{t})\in V$, the solution is extended over the interval $[t_0, t_0+\epsilon]$ and is found to be in $D(A)$ as well. One may repeat this step over another interval $[t_0+\epsilon,t_0+2\epsilon]$, $[t_0+2\epsilon,t_0+3\epsilon]\cdots.$ Thus, we obtain that $v\in C([t_0+\epsilon,\infty);V)\cap L^2_{\text{loc}}(t_0+\epsilon, D(A)).$
\end{rmk}
Furthermore, provided the noise does not degenerate, based on the condition given in the following, we obtained the existence and uniqueness results for the solution to the original equation (\ref{asnse4}). %We detail this in the next subsubsection.

If 
\begin{equation}\label{eq_v}
\sum_{l}\lambda^{\frac{\beta}{2}}|\sigma_l|^{\beta}<\infty,
\end{equation}
then by Lemma \ref{cadlaglem} the process $z$ has a version which has left limits and is right continuous in $V$.
Recall that $u_t:=v_t+z_t$ and for each $T>0$, define
\begin{align}
    Z_T(\omega):=\sup_{0\le t\le T}|z_t(\omega)|_V,\quad\omega\in\Omega.
\end{align}
If \eqref{eq_v} holds then by Lemma \ref{lem_n1} we have 
\[\mathbb EZ_T<\infty.\]
Hence there exists a measurable set $\Omega_0\subset\Omega$ such that $\mathbb P\left(\Omega_0\right)=1$ and 
\[Z_T(\omega)<\infty,\quad \omega\in\Omega_0\,.\]
Finally, let us study (\ref{asnse4}) for $\omega\in\Omega_0$. Since $z(\cdot,\omega)\in D([0,\infty);V)$, it is of course $ z(\cdot,\omega)\in D([0,\infty);H)$. Therefore, by Theorem (\ref{t3}), $u(\cdot,\omega)= v(\cdot,\omega)+ z\cdot(\omega)$ is the unique c\'adl\'ag solution to (\ref{asnse4}).
So, we extend the existence theorem of a strong solution for $ u$.
Moreover, for $\omega\in\Omega_0$  we find that $u(\cdot,\omega)= v(\cdot,\omega)+ z(\cdot,\omega)$ is the unique solution to (\ref{asnse4}) in $D([0,\infty);H)\cap D([0,\infty);V)$ which belongs to $D([h,\infty);V)\cap L^2_{\text{loc}}(h,\infty);D(A))$ for all $h>0.$ If $ u_0\in V$, then $ u\in D([h,\infty);V)\cap L^2_{\text{loc}}([h,\infty);D(A))$ for all $h>0$, $T>0.$

%Finally, we study (\ref{asnse4}) for $\omega\notin N$, where $N\subset\Omega$ is a negligible set defined in as $Z_T(\omega) :=\sup_{0\le t\le T}|z_t(\omega)|_V.$ Since $ z(\cdot,\omega)\in D([0,\infty);V)$  and so 
This completes the proof of Theorem \ref{t4}. 

Since the solution is constructed using the Banach Fixed Point Theorem, the continuous dependence on initial data is implied from the existence-uniqueness proof of strong solution in the above line.
Moreover, our existence-uniqueness results work naturally when the initial time $t_0\in \R$ other than 0. 
   
\section{Acknowledgments}
This work is taken out of the author's PhD thesis (awarded in April 2018). The author would like to express her gratitude to her PhD advisor Prof. Ben Goldys for his helpful suggestions to this work. The author is especially grateful to the anonymous referee in pointing out various mistakes. Finally, the author would like to thank Dimitri Levachov for his careful proof-read and various styling suggestions.

\bibliographystyle{plain}
\bibliography{lsthesis}
\end{document}